\documentclass[10pt,reqno]{amsart}


\usepackage[a4paper,left=27mm,right=27mm,top=25mm,
bottom=25mm,marginpar=25mm]{geometry}

\usepackage{amsmath, amssymb ,amsthm, amsfonts, amsgen,
mathrsfs}

\usepackage{pvscript}
\usepackage{frcursive}

\usepackage{mathrsfs}

\usepackage{amsmath}
\usepackage{amssymb}
\usepackage{amsthm}
\usepackage[latin1]{inputenc}
\usepackage{eurosym}
\usepackage[dvips]{graphics}
\usepackage{graphicx}
\usepackage{epsfig}

\usepackage{comment,graphicx,esint}

\usepackage[displaymath,mathlines]{lineno}

\allowdisplaybreaks

 \usepackage{xcolor}

\definecolor{labelkey}{rgb}{0,1,0}

\usepackage{ifthen}
\usepackage{enumitem}

\providecommand{\varitem}{} 
\makeatletter

\makeatother

\providecommand{\varitem}{} 
\makeatletter

\makeatother

\providecommand{\varitem}{} 
\makeatletter

\makeatother

\makeindex


\usepackage{fancyhdr}
\usepackage[us,24hr]{datetime}

\usepackage{scrtime}
\usepackage{comment}

\usepackage{tikz}
\usetikzlibrary{arrows,shapes,trees}

\usepackage[english]{babel}

\usepackage{amsfonts}

\usepackage{lmodern}


 \definecolor{Kblue}{rgb}{0,0.651,0.667} 
 
 \definecolor{Korange}{rgb}{0.945,0.561,0}

 \definecolor{Kgreen}{rgb}{0.804,0.808,0}

 \definecolor{Kyellow}{rgb}{0.941,0.71,0}
 
 \definecolor{Mgray}{rgb}{0.5,0.5,0.5}

\definecolor{Mostarda}{rgb}{0.843137,0.823529,0.0627451}

\definecolor{Azul}{rgb}{0.372549,0.447059,0.784314}

\definecolor{Verde}{rgb}{0.0392157,0.568627,0.0784314}

\definecolor{Mblue}{rgb}{0.392157,0.152941,1}

\usepackage{extpfeil}

\usepackage{trimclip}

\DeclareRobustCommand{\rscc}{%
 \clippedrightharpoonup\mathrel{\mspace{-15mu}}\rightharpoonup
}
\makeatletter
\newcommand{\clippedrightharpoonup}{%
  \mathrel{\mathpalette\clipped@rightharpoonup\relax}%
}
\newcommand{\clipped@rightharpoonup}[2]{%
  \sbox\z@{$\m@th#1\mspace{6mu}$}%
  \clipbox{0pt 0pt {\dimexpr\width-.5\wd\z@} 0pt}{$\m@th#1\rightharpoonup$}%
  \smash{\clipbox{{\wd\z@} 0pt 0pt {-\width}}{$\m@th#1\rightharpoonup$}}%
}
\makeatother

\newextarrow{\Rsc}{{-6}{1}{30}{1}}{\relbar\relbar\rscc}

\usepackage[colorlinks=true, pdfstartview=FitV, linkcolor=blue, citecolor=blue, urlcolor=blue,pagebackref=false]{hyperref}

\renewcommand{\div}{\operatorname{div}}

\newcommand{\Cc}{{\mathbb{C}}}
\newcommand{\Nn}{{\mathbb{N}}}

\newcommand{\calH}{{\mathcal{H}}}

\newcommand{\Aa}{{\mathcal{A}}}

\newcommand{\Gg}{{\mathcal{G}}}

\newcommand{\e}{\varepsilon}

\newcommand{\epsi}{\varepsilon}
\def\dd{{\rm d}}
\def\dx{{\rm d}x}
\def\dy{{\rm d}y}

\def\leq{\leqslant}
\def\geq{\geqslant}

\newcommand{\grad}{\nabla}

\newcommand{\ffi}{\varphi} 
\newcommand{\aev}{{a.e.}}

\let\weakly\rightharpoonup

\newcommand{\bR}{{\boldsymbol{R}}}

\newcommand{\RR}{\mathbb{R}}
\newcommand{\NN}{\mathbb{N}}
\newcommand{\ZZ}{\mathbb{Z}}

\newcommand{\su}{\mathcal{U}_\mathcal{A}}
\newcommand{\sv}{\mathcal{V}_\mathcal{A}}
\newcommand{\sw}{\mathcal{W}_\mathcal{A}}

\definecolor{darkgreen}{rgb}{0,0.5,0}
\definecolor{darkblue}{rgb}{0,0,0.7}
\definecolor{darkred}{rgb}{0.9,0.1,0.1}

\newcommand{\sA}{\mathcal{A}}

\newcommand{\rfcomment}[1]{\marginpar{\raggedright\scriptsize{\textcolor{darkblue}{#1}}}}

\newcommand{\R}{{\mathbb{R}}}

\newcommand{\A}{{\mathcal{A}}}

\newcommand{\beq}{\begin{equation}}
\newcommand{\eeq}{\end{equation}}

\numberwithin{equation}{section}

\newtheoremstyle{thmlemcorr}{10pt}{10pt}{\itshape}{}{\bfseries}{.}{10pt}{{\thmname{#1}\thmnumber{
#2}\thmnote{ (#3)}}}
\newtheoremstyle{thmlemcorr*}{10pt}{10pt}{\itshape}{}{\bfseries}{.}\newline{{\thmname{#1}\thmnumber{
#2}\thmnote{ (#3)}}}
\newtheoremstyle{defi}{10pt}{10pt}{\itshape}{}{\bfseries}{.}{10pt}{{\thmname{#1}\thmnumber{
#2}\thmnote{ (#3)}}}
\newtheoremstyle{remexample}{10pt}{10pt}{}{}{\bfseries}{.}{10pt}{{\thmname{#1}\thmnumber{
#2}\thmnote{ (#3)}}}
\newtheoremstyle{ass}{10pt}{10pt}{}{}{\bfseries}{.}{10pt}{{\thmname{#1}\thmnumber{
A#2}\thmnote{ (#3)}}}

\theoremstyle{thmlemcorr}
\newtheorem{theorem}{Theorem}
\numberwithin{theorem}{section}
\newtheorem{lemma}[theorem]{Lemma}
\newtheorem{corollary}[theorem]{Corollary}
\newtheorem{proposition}[theorem]{Proposition}

\theoremstyle{thmlemcorr*}
\newtheorem{theorem*}{Theorem}
\newtheorem{lemma*}[theorem]{Lemma}
\newtheorem{corollary*}[theorem]{Corollary}
\newtheorem{proposition*}[theorem]{Proposition}
\newtheorem{problem*}[theorem]{Problem}
\newtheorem{conjecture*}[theorem]{Conjecture}

\theoremstyle{defi}
\newtheorem{definition}[theorem]{Definition}

\theoremstyle{remexample}
\newtheorem{remark}[theorem]{Remark}

\theoremstyle{ass}


\makeatletter
\def\@seccntformat#1{\@ifundefined{#1@cntformat}%
   {\csname the#1\endcsname\quad}
   {\csname #1@cntformat\endcsname}
}
\makeatother

\allowdisplaybreaks

\usepackage[sort, nocompress, space]{cite}

\address[R.~Ferreira]{King Abdullah University
of Science
and Technology (KAUST), CEMSE Division,  Thuwal 23955-6900,
Saudi Arabia. \bf{E-mail:} rita.ferreira@kaust.edu.sa}

\address[I.~Fonseca]{Department of Mathematical Sciences, Carnegie Mellon University (CMU),
Forbes Avenue,
Pittsburgh, PA 15213, U.S.A.. \bf{E-mail:} fonseca@andrew.cmu.edu}

\address[R.~Venkatraman]{Department of Mathematical Sciences, Carnegie Mellon
University (CMU),
Forbes Avenue,
Pittsburgh, PA 15213, U.S.A.. \bf{E-mail:} rvenkatr@andrew.cmu.edu}



\usepackage[normalem]{ulem}

\DeclareFontFamily{OT1}{pzc}{}
\DeclareFontShape{OT1}{pzc}{m}{it}{<-> s * [0.900] pzcmi7t}{}
\DeclareMathAlphabet{\mathscr}{OT1}{pzc}{m}{it}

\usepackage[scr=boondoxo]{mathalfa}

\usepackage{aurical}
\usepackage[T1]{fontenc}

\usepackage[bbgreekl]{mathbbol}
\usepackage{amsfonts}

\DeclareSymbolFontAlphabet{\mathbb}{AMSb}
\DeclareSymbolFontAlphabet{\mathbbl}{bbold}

\newcommand{\RRn}{\mathbb{R}^\mathbbl{n}}
\newcommand{\RRm}{\mathbb{R}^\mathbbl{m}}
\newcommand{\RRk}{\mathbb{R}^\mathbbl{k}}
\newcommand{\RRd}{\mathbb{R}^\mathbbl{d}}
\newcommand{\RRl}{\mathbb{R}^\mathbbl{l}}

\newcommand{\RRmn}{\mathbb{R}^{\mathbbl{m}\times\mathbbl{n}}}

\newcommand{\RRld}{\mathbb{R}^{\mathbbl{l}\times\mathbbl{d}}}

\newcommand{\bbln}{\mathbbl{n}}
\newcommand{\bblm}{\mathbbl{m}}
\newcommand{\bblk}{\mathbbl{k}}
\newcommand{\bbll}{\mathbbl{l}}
\newcommand{\bbld}{\mathbbl{d}}

\begin{document}

\title[Homogenization of quasicrystals]{Homogenization of Quasi-crystalline functionals \\  via two-scale-cut-and-project
convergence}
\author[\hfill Rita Ferreira, Irene Fonseca, and Raghavendra Venkatraman
\hfill]{Rita Ferreira, Irene Fonseca, and Raghavendra Venkatraman}

\begin{abstract} We consider a homogenization problem associated with quasi-crystalline multiple integrals of the form
\begin{equation*}
\begin{aligned}
u_\epsi\in L^p(\Omega;\RRd) \mapsto  \int_\Omega f_R\Big(x,\frac{x}{\epsi},
u_\epsi(x)\Big)\,\dx, 
\end{aligned}
\end{equation*}
 where \(u_\epsi\) is subject to constant-coefficient linear partial differential constraints. The quasi-crystalline structure of the underlying composite  is encoded in the dependence on the second variable
of  the Lagrangian,   \(f_R\), and is modeled via the {\it cut-and-project} scheme that interprets the heterogeneous microstructure to be homogenized as an irrational subspace of a higher-dimensional space. A key step in our analysis is the characterization of the quasi-crystalline two-scale limits of sequences of the vector fields \(u_\epsi\) that are in the kernel of a given constant-coefficient linear partial differential operator, \(\Aa\), that is, \(\Aa u _\epsi =0\). Our results provide a generalization of  related ones in the literature concerning the \({\rm \Aa =curl } \) case to more
general  differential operators \(\Aa\) with constant coefficients, and  without   coercivity assumptions on the Lagrangian \(f_R\).

\textit{Keywords: homogenization, quasi-crystalline composites, multi-scale variational problems, PDE constraints, two-scale-cut-and-project convergence, \(\Gamma\)-convergence} 

\textit{MSC (2010): 49J45, 35E99 }

\textit{Date:}\ \today

\end{abstract}
%
%
%
%
%
%
%
%

%

\maketitle
\tableofcontents

\section{Introduction}

The  theory of homogenization addresses the description of the macroscopic or effective behavior of a microscopically heterogeneous system. There are multiple applications in the fields of physics, mechanics, materials science and other areas of engineering, including problems aimed at the modeling of composites, stratified or porous media, finely damaged materials, or materials with many holes or cracks.

From the mathematical viewpoint, homogenization is often associated with the study of the asymptotic behavior of oscillating partial differential equations, or of minimization problems deriving from certain oscillating functionals, depending on one or more small-scale parameters that represent the length scales of the
heterogeneities. 

A common assumption in the literature is based on the premise
that the heterogeneities
are evenly distributed, leading to the mathematical assumption of periodicity in the so-called fast variable, which encodes
  the heterogeneities in the mathematical problem.  Even though
the  study of the effective behavior of periodically structured heterogeneous media has enabled the study of  more complex ones, it
is commonly accepted that periodicity is often not the most suited
structural hypothesis. This fact is at the basis of many recent
works devoted to the study of the effective behavior  of random heterogenous materials whose small-length-scale properties are
described at a statistical level only.

Here, we are interested in materials with a quasi-crystalline
microstructure characterized by small-length-scale properties
that are neither periodic nor random.
Quasicrystals, also known as quasiperiodic crystals, are  ordered structures that do not share the translational symmetry of traditional crystals \cite{ShBl85,ShBlGrCa84}. A quasi-crystalline pattern can continuously
fill an \(\bbln\)-dimensional space, but will never be translational symmetric in more than \(\bbln-1\) linearly independent directions.

The discovery of quasicrystals was announced in the early 1980s
   by two groups of crystallographers, Schechtman, Blech, Gratias,
and Cahn \cite{ShBlGrCa84} and Levine and Steinhardt \cite{LeSt84}. At  first,
this was received with scepticism, and even hostility, by the the scientific
community as quasicrystals violate the foundations of classical crystallography. However, in 2011,
Shechtman was awarded the Nobel Prize in Chemistry for this  discovery.
A striking feature of quasicrystals is that their Bragg diffraction
displays peculiar five-, ten-, or twelve-fold symmetry orders
in contrast with the rigid crystallography of periodic crystals.
Moreover, the assembly  of  quasi-crystalline tiling patterns
is  nonlocal and exhibit similar patterns
 at different scales (self-similarity). 

There has been a rich discussion and extensive efforts in various mathematical communities
to   model 
quasicrystals; see \cite{CaTa1,CT2,Lag,Meyer} and related references.
A well-established  mathematical
approach to study quasicrystals is based on aperiodic tilings of hyperplanes, in which one aims at finding a set of geometric shapes, called tiles,  paving the Euclidean plane without gaps or overlaps,   in a non-periodic manner
only (see Figure~\ref{fig:R}).  
A systematic, but not exhaustive, scheme to derive such tilings is via the cut-and-project method, introduce by de Bruijn \cite{Br81}
and further developed by Duneau and Katz \cite{DuKa85}, which extends Penrose's  ideas of  aperiodic tilings of the plane \cite{Pe79} to higher dimensions
(see \cite{BoGuZo10} for a more detailed description).

Roughly speaking, \(\bbln\)-dimensional quasi-crystalline patterns
can be modeled by cutting periodic tilings in an \(\bblm\)-dimensional
space,  with \(\bblm>\bbln\), through an \(\bbln\)-dimensional subspace
with
\textit{irrational slope}. 
To be precise, given
an  \(\bbln\)-dimensional  quaiscrystal \(R\) and representing
by    \(\sigma_R:\RRn\to\RR\) 
a constitutive property of \(R\), we can find \(\bblm\in\NN\),
with  \(\bblm>\bbln\),
 a \(Y^\bblm\)-periodic
function \(\sigma:\RRm\to\RR\) with \(Y^\bblm\subset\RRm\) a parallelotope, and   a
linear map \(\bR:\RRn\to\RRm\) such that
\begin{equation}\label{eq:defqscrystal}
\begin{aligned}
\sigma_R(x) = \sigma(\bR x).
\end{aligned}
\end{equation}
Here, and in the sequel, we do not distinguish the linear map
from its associated matrix  
in $\RR^{\bblm \times \bbln}$, and denote both  by \(\bR\). For
instance, the  matrix
\begin{equation*}
\begin{aligned}
\bR=\frac{1}{\sqrt{2(\tau +\ 2)}}\begin{bmatrix}
1 & \tau & 0 \\
\tau & 0 & 1 \\
0 & 1 & \tau \\
-1 & \tau & 0 \\
\tau & 0 & -1 \\
0 & -1 & \tau \\
\end{bmatrix}, \qquad \text{with }\tau=\frac{1+\sqrt5}{2},
\end{aligned}
\end{equation*}
is associated with the quasi-crystalline phase  \(\rm Al_{63.5}Fe_{12.5}Cu_{24}\)
(see, for instance, \cite{BoGuZo10}).
   
In general, there are multiple choices for \(\bblm\), \(\sigma\),
and \(\bR\), which could lead to some ambiguity in our asymptotic
analysis. However, as proved in  \cite{BoGuZo10}, the homogenization
analysis does not depend on \(\bR\) provided it satisfies the
following diophantine condition
 \begin{equation}
\label{eq:Rcriterion}
\begin{aligned}
\bR^*k\not=0\enspace \text{for all } k \in \ZZ^\bblm\backslash\{0\},
\end{aligned}
\end{equation}
where 
 $\bR^* $ denotes the transpose of $\bR$. This condition implies
that some entries of \(\bR\) must be irrational, which justifies
the  expression
\textit{irrational slope} used above.

\begin{figure}[h]
  \centering
    \includegraphics[width=0.3\textwidth]{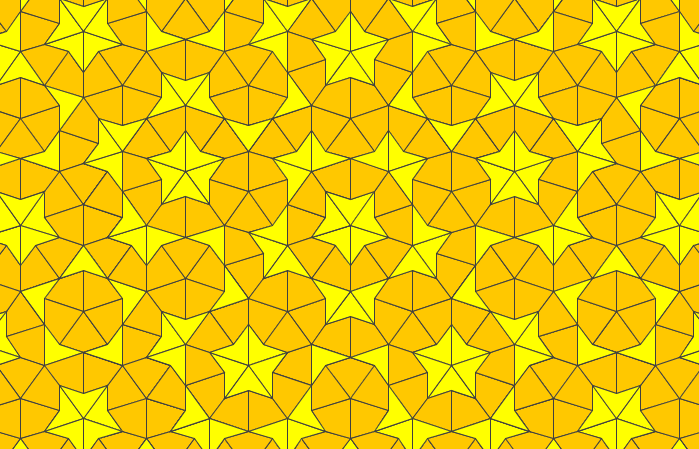}
    \caption{A quasi-crystalline heterogeneous microstructure corresponding to the so-called Penrose tiling of the plane with five-fold symmetry. Image source: wikipedia.}\label{fig:R}
\end{figure}

%

Quasi-crystalline composites and alloys have played a central role in materials science and other areas of engineering \cite{BlBaOtSh00,
KeBoDuFo12, GuSuPa20}. Indeed, Al-Cu-Fe quasi-crystalline materials in polymer-based composites have significantly shown to improve wear-resistance to volume loss, and a two-fold increase in the elastic moduli. As we mentioned before, the mathematical study of such quasi-crystalline composites does not fit within the \textit{classical} periodic
homogenization theory. More appropriate in the context of quasicrystal composites are almost-periodic and stochastic homogenization,
which were initiated with the works of Papanicolaou and Varadhan \cite{PaVa81} and Kozlov \cite{Ko79}, for partial differential equations, and Dal Maso and Modica \cite{DaMo860}
within a variational framework; see also \cite{Br92,De92,CaGa02}
and the references therein.
However, such approaches often lead to untractable formulas that
do not take full advantage of the quasi-crystalline feature of
the problem. Instead, we adopt and further develop a homogenization procedure based on the  
two-scale-cut-and-project
convergence introduced in \cite{BoGuZo10}, and recently revisited in
\cite{ WeGuCh19}.
\medskip

In this paper, we  initiate
a research program devoted to the study
of quasi-crystalline homogenization problems involving oscillating
integral energies under quasi-crystalline oscillating differential constraints, in the framework of \(\Aa\)-quasiconvexity. To be precise, we aim at characterizing the asymptotic behavior of integral energies of the form
\begin{equation}\label{eq:qscrystFe}
\begin{aligned}
F_\epsi (u_\epsi):= \int_\Omega f_R\Big(x,\frac{x}{\epsi^\alpha},
u_\epsi(x)\Big)\,\dx
\end{aligned}
\end{equation}
as \(\epsi\to 0^+\), where \(\epsi^\alpha>0\), with \(\alpha\geq0\), represents the length-scale
of the tiles featuring the quasi-crystalline composite. Moreover, \(\Omega\subset\RRn\)
with, \(\bbln\in\NN\), is an  open and bounded set that represents the container occupied
by the composite, and \(f_R\) is the Lagrangian of the system
whose dependence in the second variable, the fast variable,
encodes the quasi-crystalline structure of the composite, highlighted
with the subscript \(R\) as in \eqref{eq:defqscrystal}. Finally, \(u_\epsi\) is an abstract vector-valued order-parameter whose physical interpretation might depend on the problem in question. A typical case is that  in which \(u_\epsi\) is curl-free, \(u_\epsi = \grad v_\epsi\) for some potential deformation \(v_\epsi\).  However, many applications require that \(u_\epsi\) instead satisfies other linear partial differential constraints, such as Maxwell's equations in the case of electromagnetism,  or, in the case of linear elasticity, \(u_\epsi\) is the symmetric part of a gradient. A unified abstract approach to several of these constraints is that of \(\Aa\)-free fields, as pioneered by Fonseca and M\"uller \cite{FoMu99}  (see also  \cite{Dac1,Dac2,Sa04}). To be precise, \(u_\epsi \in L^p(\Omega;\RRd)\) 
is subject to quasi-crystalline oscillating differential constraints such as 
\begin{equation*}
\begin{aligned}
\Aa_\epsi u_\epsi:=\sum_{i=1}^\bbln A^i_R\Big(\frac{\cdot}{\epsi^\beta}\Big) \frac{\partial
u_\epsi}{\partial x_i}(\cdot) \to 0\enspace \text{ strongly in
 \(W^{-1,p}(\Omega;\RRl)\) }
\end{aligned}
\end{equation*}
or, in divergence form, 
\begin{equation*}
\begin{aligned}
\Aa_\epsi u_\epsi:=\sum_{i=1}^\bbln \frac{\partial
}{\partial x_i}\Big( A^i_R\Big(\frac{\cdot}{\epsi^\beta}\Big) 
u_\epsi(\cdot)\Big) \to 0\enspace \text{ strongly in
 \(W^{-1,p}(\Omega;\RRl)\)}
\end{aligned}
\end{equation*}
with \(\bbld,\,\bbll\in\NN\) and \(1<p<\infty\), where for every \(x\in \RRn\), \(A^i_R(x)\in \text{Lin}(\RRd;\RRl)\)
features a quasi-crystalline pattern, and \(\beta\geq0\) is a parameter. For the study of homogenization of integral energies with periodic energy densities and under periodically oscillating  \(\Aa\)-free differential constraints, we refer the reader  to \cite{MaMoSa15,KrKr16,DaFo16,DaFo160,BrFoLe00,FoKr10}.
 
 As in the periodic setting \cite{DaFo16,DaFo160}, we expect different asymptotic regimes according the ratio between \(\alpha\) and \(\beta\). As a starting
point to this extensive research project, we first focus here on the
case where \(\beta=0\) and \(A^i_R\) is independent of \(x\), in which
case \(u_\epsi \) is subjected to homogeneous first-order linear partial differential constraints.  Precisely, in this manuscript
we address the problem of characterizing the asymptotic behavior
as \(\epsi\to 0^+\) of  
integral energies of the form
\begin{equation}\label{eq:defFe}
\begin{aligned}
F_\epsi (u):= \int_\Omega f_R\Big(x,\frac{x}{\epsi},
u(x)\Big)\,\dx
\end{aligned}
\end{equation}
 for \(u \in L^p(\Omega;\RRd)\) satisfying \(\Aa u =0\), where
\begin{equation}\label{eq:Aconstraint}
\begin{aligned}
\Aa u:= \sum_{i=1}^\bbln A^{(i)}\frac{\partial u}{\partial x_i}\quad
\text{with } A^{(i)}\in \RRld \text{ for all } i\in\{1,...,\bbln\}.
\end{aligned}
\end{equation}

We refer to Section~\ref{subs:Aoperators} for a rigorous definition
of the identity \(\Aa u =0\),
in which case we say that the vector field \(u\) is $\Aa$-free
(see Definition~\ref{def:Afree}). A common
assumption
within studies involving \(\Aa\)-free vector fields
is the constant-rank property, which states that  there exists \(\mathscr{r}\in\NN\) such that for all \(w\in\RRn\setminus
\{0\}\), we have
\begin{equation}
\label{constrank}
\begin{aligned}
{\rm rank}\,\mathbb{A}(w) = \mathscr{r},
\end{aligned}
\end{equation}
where \(\mathbb{A}:\RRn \to \RRld\) denotes the symbol of \(\Aa\),
and is defined by 
\begin{equation}\label{eq:Asymb}
\begin{aligned}
\mathbb{A}(w):= \sum_{i=1}^\bbln A^{(i)}w_i
\end{aligned}
\end{equation}
for  \(w\in\RRn\). We assume that  our operator \(\A\) satisfies
the constant-rank property, and we refer
the reader to \cite{FoMu99,Ta79,Mu81} for further insights on
this property and on \(\Aa\)-free
fields.

Our asymptotic analysis of the energy integrals in \eqref{eq:defFe}
under the constraint \eqref{eq:Aconstraint} is based on  \(\Gamma\)-convergence
techniques, whose key point is to find  an integral representation to
\begin{equation}\label{eq:defFhom}
\begin{aligned}
F_{\rm hom}(u):=\inf \Big\{ \liminf_{\epsi\to0^+}  F_\epsi(u_\epsi)\!:\, u_\epsi \weakly  u \text{ in } L^p(\Omega;\RRd),
\enspace \Aa u_\epsi =0\Big\}.
\end{aligned}
\end{equation}
To state our main theorem regarding this integral representation, we first introduce the hypotheses on the Lagrangian,  $f_R: \Omega \times \RRn \times \RRd
\to [0,\infty)$: 
\begin{enumerate}
    \item[(H1)] (Quasi-crystallinity:) there exist \(\bblm\in
    \NN\), with \(\bblm>\bbln\), a matrix \(\bR \in \RRmn\) satisfying
\eqref{eq:Rcriterion}, and a continuous       function
$f :\Omega \times \RRm \times \RRd \to [0,\infty)$ such that
    the function $f(x, \cdot, \xi)$ is $Y^\bblm$-periodic for
each $(x,\xi) \in \Omega \times \RRd,$ with $Y^\bblm$ denoting
a paralleletope in $\RRm, $ and\[f_R(x,z,\xi) = f(x,\bR z , \xi)\] for all $(x,z,\xi) \in \Omega \times \RRn \times \RRd.
$  

    \item[(H2)] (Growth:) there exist $p \in (1,\infty)$  and
\(C>0\) such
that 
    \begin{align*}
        0 \leqslant f_R(x,z,\xi) \leqslant C(1 + |\xi|^p) 
    \end{align*}
 for all  $(x,z,\xi) \in \Omega \times \RRn \times \RRd$.
\end{enumerate}
In the proof of the lower bound for the integral representation
of \(F_{\rm hom}\), we will require, in addition,

\begin{enumerate}
      \item[(H3)] (Convexity:) for all   $(x,y) \in \Omega\times \RRm$, the function $\xi \mapsto f(x, y, \xi)$
is convex and $C^1$. 
\end{enumerate}  

We refer the reader to Section~\ref{sect:notpre} for a  list of the main notations we use in this manuscript. However, for the readability of our main results, we  clarify upfront that   $ L^p_\#(Y^\bblm; \RRn)$ denotes
the space of  $Y^\bblm$-periodic functions  belonging to   \(L^p_{\rm loc}(\RRm)\). Moreover, given a Lebesgue measurable  set $B\subset \RRk $, with \(\bblk\in\NN\),
we use the notation $\fint_B \cdot$ in place of  $ \frac{1}{\mathcal{L}^\bblk(B)}\int_B \cdot$,
where $\mathcal{L}^\bblk(B)$ denotes
the \(\bblk\)-dimensional Lebesgue measure of  $B$.

\begin{theorem}\label{thm:main} Let \(\Omega\subset\RRn\) be
an open and bounded set,  let 
 $f_R: \Omega \times \RRn \times \RRd
\to [0,\infty)$ be a function satisfying (H1)--(H3), let \(F_{\rm
hom}\) be the functional introduced in \eqref{eq:defFhom}, and  assume
that \eqref{constrank} holds. Then, for
all
\begin{align}\label{eq:setUA}
  u\in  \su := \big\{u \in L^p(\Omega;\RRd) : \Aa u = 0\big \},
\end{align}
we have
\begin{equation*}
\begin{aligned}
F_{\rm hom} (u)=\int_\Omega f_{\rm hom} (x,u(x))\,\dx,
\end{aligned}
\end{equation*}
where
\begin{equation*}
\begin{aligned}
f_{\rm hom} (x,\xi):=\inf_{v\in\sv } \fint_{Y^\bblm} f(x,y,\xi + v(y))\,\dy
\end{aligned}
\end{equation*}
with
\begin{equation}
\begin{aligned}\label{eq:setVA}
    \sv := \bigg\{v \in L^p_\# (Y^\bblm; \RRd)\! :\, v \mbox{
is }  \Aa_{\bR^*}\mbox{-free in the sense of Definition~\ref{def:AR*free} and }\int_{Y^\bblm} v(y)\,\dy=0\bigg\}.
\end{aligned}
\end{equation}

\end{theorem}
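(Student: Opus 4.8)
The strategy is the standard two-step $\Gamma$-convergence argument: establish the lower bound $F_{\rm hom}(u) \geq \int_\Omega f_{\rm hom}(x,u(x))\,\dx$ and the upper bound $F_{\rm hom}(u) \leq \int_\Omega f_{\rm hom}(x,u(x))\,\dx$ separately, the whole machinery running through two-scale-cut-and-project convergence rather than ordinary two-scale convergence. First I would record the basic compactness statement: if $u_\epsi \weakly u$ in $L^p(\Omega;\RRd)$ with $\Aa u_\epsi = 0$, then up to a subsequence $u_\epsi$ two-scale-cut-and-project converges to some $U \in L^p(\Omega; L^p_\#(Y^\bblm;\RRd))$, and — crucially — the cut-and-project structure of the target together with $\Aa u_\epsi = 0$ forces $U(x,\cdot)$ to decompose as $u(x) + v(x,\cdot)$ with $v(x,\cdot) \in \sv$; this is the ``characterization of quasi-crystalline two-scale limits of $\Aa$-free fields'' advertised in the abstract and should already be available as a proposition earlier in the paper. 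I would also need, as in the periodic $\Aa$-quasiconvexity theory, a measurable-selection/approximation lemma guaranteeing that the infimum defining $f_{\rm hom}(x,\xi)$ is attained in a suitable quantitative sense and that $f_{\rm hom}$ is itself a Carathéodory integrand with the same $p$-growth.

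For the \emph{lower bound} I would take an arbitrary recovery-candidate sequence $u_\epsi \weakly u$ with $\Aa u_\epsi = 0$ and $\sup_\epsi F_\epsi(u_\epsi) < \infty$; pass to a subsequence realizing the liminf and producing a two-scale limit $U(x,y) = u(x) + v(x,y)$ as above. Using (H1) to rewrite $f_R(x,x/\epsi,u_\epsi(x)) = f(x,\bR(x/\epsi),u_\epsi(x))$ and the lower semicontinuity of two-scale-type functionals under the convexity hypothesis (H3) — here the $C^1$ convexity in $\xi$ lets one run the standard ``$f(x,y,\xi) \geq f(x,y,\eta) + \partial_\xi f(x,y,\eta)\cdot(\xi-\eta)$'' linearization argument, the linear term dropping out in the limit because $v(x,\cdot)$ has zero mean and the oscillating test functions can be taken to converge strongly in the two-scale sense — one obtains
\[
\liminf_{\epsi\to0^+} F_\epsi(u_\epsi) \;\geq\; \int_\Omega \fint_{Y^\bblm} f\big(x,y,u(x)+v(x,y)\big)\,\dy\,\dx \;\geq\; \int_\Omega f_{\rm hom}(x,u(x))\,\dx,
\]
the last inequality being the pointwise-in-$x$ definition of $f_{\rm hom}$. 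Taking the infimum over admissible $u_\epsi$ gives the lower bound.

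For the \emph{upper bound} I would first treat $u$ constant (or piecewise constant on a partition of $\Omega$ into small cubes), pick for each value $\xi$ a near-optimal $v_\xi \in \sv$ in the definition of $f_{\rm hom}(x,\xi)$, and build the oscillating competitor $u_\epsi(x) := u(x) + v_{u(x)}(\bR(x/\epsi))$; one must check that this $u_\epsi$ is genuinely $\Aa$-free — this uses precisely that $v_\xi$ is $\Aa_{\bR^*}$-free in the sense of Definition~\ref{def:AR*free}, since the chain rule applied to $x \mapsto v_\xi(\bR(x/\epsi))$ converts the $\bR^*$-twisted constraint on $Y^\bblm$ into the plain constraint $\Aa u_\epsi = 0$ on $\Omega$ — and that $u_\epsi \weakly u$ by the ergodic/equidistribution property encoded in the Diophantine condition \eqref{eq:Rcriterion}, while $F_\epsi(u_\epsi) \to \int f_{\rm hom}$ by the analogous averaging statement applied to the continuous integrand $f$. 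Then one removes the continuity/$C^1$ and piecewise-constant restrictions by a density argument: approximate general $u \in \su$ by piecewise-constant (or smoother) $\Aa$-free fields — this is where the constant-rank hypothesis \eqref{constrank} enters, via the standard projection-onto-$\Aa$-free-fields operator and the fact that $f_{\rm hom}$ inherits $\Aa$-quasiconvexity and hence continuity — and use lower semicontinuity of $F_{\rm hom}$ together with continuity of $u \mapsto \int f_{\rm hom}(x,u(x))\,\dx$ to pass to the limit. The main obstacle, I expect, is not any single estimate but the \emph{recovery-sequence construction in the absence of coercivity}: without a lower bound of the form $c|\xi|^p \leq f_R$, one cannot a priori extract strong or two-scale compactness for competitors, so the lower-bound argument must be arranged to work with only the upper $p$-growth (H2), and the measurable selection of the near-optimal correctors $v_{u(x)}$ must be carried out carefully so that the resulting $u_\epsi$ remains admissible and the energies converge; reconciling the $\bR^*$-twisted cell problem with the physical $\Aa$-constraint through the cut-and-project change of variables is the technically delicate point.
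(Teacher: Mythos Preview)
Your lower-bound sketch is essentially the paper's Proposition~4.4: extract an $\bR$-two-scale limit $U=u+v$ with $v(x,\cdot)\in\sv$ via the characterization of $\Aa$-free two-scale limits, linearize by (H3) around smooth approximants $\psi_j\to U$, and pass to the limit. (Minor correction: the linear remainder vanishes because $\psi_j\to U$ in $L^p$, not because $v$ has zero mean.)

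The upper-bound sketch has a genuine gap. Your claim that $u_\epsi(x):=u(x)+v_{u(x)}(\bR x/\epsi)$ is ``genuinely $\Aa$-free'' via the chain rule holds only when the corrector is smooth \emph{and independent of $x$}: the chain rule kills the $y$-derivative contribution precisely because $\Aa_{\bR^*}v=0$, but any $x$-dependence of the corrector produces an additional $\Aa_x$-term that does not vanish. For piecewise-constant $u$ this extra term is a distribution on the jump set; for smooth $u$ it is $(\Aa_x v)(x,\bR x/\epsi)$, in general nonzero. Worse, the proposed density step fails: piecewise-constant fields are almost never $\Aa$-free (for $\Aa=\mathrm{curl}$ on a connected $\Omega$ they must be globally constant), so you cannot approximate a general $u\in\su$ that way. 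And using a single $x$-independent corrector $v\in\sv$ yields only $\inf_{v\in\sv}\int_\Omega\fint_{Y^\bblm}f(x,y,u(x)+v(y))\,\dy\,\dx$, which is not $\int_\Omega f_{\rm hom}(x,u(x))\,\dx$.

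The paper's route is to accept that the naive competitor is only \emph{asymptotically} $\Aa$-free and to deploy the constant-rank hypothesis at that point, not in approximating $u$. One takes a general $w\in\sw$ (produced from near-optimal cell correctors by a measurable-selection argument, Proposition~4.6), mollifies $\bar w_1$ in both variables to a smooth $\Aa_{\bR^*}^y$-free field, and sets $w_\epsi(x):=u(x)+\bar w_1(x,\bR x/\epsi)$. Then $\Aa w_\epsi=(\Aa_x\bar w_1)(x,\bR x/\epsi)\weakly\Aa_x\!\int_{Y^\bblm}\bar w_1\,\dy=0$ weakly in $L^p$, hence $\Aa w_\epsi\to0$ in $W^{-1,p}$. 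The Fonseca--M\"uller projection lemma (Lemma~\ref{lem:2.8FK}) now furnishes a genuinely $\Aa$-free $u_\epsi$ with $u_\epsi-w_\epsi\to0$ in $L^p$, and $p$-equiintegrability plus continuity of $f$ transfer the energy limit (Lemma~\ref{lem:Prop3.5i}). So the projection operator corrects the \emph{recovery sequence}, not the target $u$.
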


\begin{remark}[On the hypotheses of Theorem~\ref{thm:main}]\label{rmk:onH}
(i)~In the homogenization literature, measurability of \(f\) with respect to the fast-variable
is often preferred over continuity. As we further discuss in Section~\ref{subs:Rmatrix}, 
measurability of \(f_R\)  requires, in general, Borel-measurability
of \(f\). A common approach to deal with lack
of continuity is to combine periodicity with  Scorza--Dragoni's type results that,
up to a set of small measure, allow to reduce the problem
to the continuity setting. Here, however, we cannot use such an
argument because a  set of small \(\bblm\)-dimensional Lebesgue measure, the ambient space for the fast variable in terms of
(the periodic function) \(f\), may not have small \(\bbln\)-dimensional Lebesgue, the ambient space for the fast variable in terms of
(the quasi-crystalline function) \(f_R\).
(ii)~The non-convex case raises non-trivial difficulties in the
quasi-crystalline setting, and will be the subject of a forthcoming
work.\end{remark}

 In the Sobolev setting, homogenization of integral energies of the form \eqref{eq:defFe}
 under {\em non-periodic} assumptions was undertaken in \cite{Br92,DaMo860,
 HeNeVa18}
 in the \(\Aa=\rm curl\) case,  assuming coercivity. Within
 the quasi-crystalline framework, Theorem~\ref{thm:main}
 extends these results to the general \(\Aa\)-free setting and
without coercivity.
We prove Theorem~\ref{thm:main} in Section~\ref{sect:proofmain}; 
the main tools we use here  are based on \(\Gamma\)-convergence
 and on two-scale convergence
adapted to the quasi-crystalline setting, also called   
two-scale-cut-and-project
convergence. For brevity, and having in mind the relation \eqref{eq:defqscrystal}, we refer to the   
two-scale-cut-and-project
convergence as \(\bR\)-two-scale convergence. This notion was introduced in  \cite{BoGuZo10} (also see \cite{WeGuCh19})
as an extension of the usual notion of two-scale convergence
\cite{Al92,Ng89}
to enable the study of composites whose underlying microstructure
has a quasi-crystalline feature. 

Here, we further extend the
study of  \(\bR\)-two-scale convergence in two different ways.
In \cite{BoGuZo10, WeGuCh19}, the authors consider sequences
in \(L^2\) and their arguments are based on Fourier analysis
relying heavily on Parseval's
and Plancherel's identities. Instead, we consider the more general
case of \(L^p\) with \(p\in (1,\infty)\). Moreover,
in \cite{BoGuZo10} the authors characterize the limit, with
respect
to the \(\bR\)-two-scale convergence, of bounded sequences in
\(W^{1,2}\), while in \cite{WeGuCh19} the authors  characterize
the limit associated with bounded sequences in \(L^2\) that are
divergence-free or curl-free. Here, besides generalizing these
results to the \(L^p\) case, we provide a unified approach to
all these cases by considering bounded sequences in \(L^p\) that
are \(\Aa\)-free, in the spirit of \cite{FoKr10} concerning the
periodic case.

Next, we state our main result regarding the characterization
of the limits of bounded sequences in \(L^p\) that
are \(\Aa\)-free. We refer the reader to Sections~\ref{subs:Aoperators} and
\ref{subs:2scAfree}, where
we give a precise meaning to the expressions ``\(\Aa\)-free''
and ``\((\Aa,\Aa_{\bR^*}^y)\)-free'' that we make use in this statement.

\begin{theorem}\label{thm:main2}
Let \(\bR\in \RRmn \) satisfy \eqref{eq:Rcriterion}. A
function
\(u\in L^p(\Omega\times Y^\bblm;\RRd)\) is the \(\bR\)-two-scale
limit of an \(\Aa\)-free sequence \(\{u_\epsi\}_\epsi\subset L^p(\Omega;\RRd)\) if and only if   \(u\) is \((\Aa,\Aa_{\bR^*}^y)\)-free in the sense of
Definition~\ref{def:AAR*free}; that is, 
\begin{equation}\label{eq:charR2sclim}
\begin{aligned}
\Aa \bar u_0 =0\quad \text{and} \quad \Aa_{\bR^*}^y \bar u_1
=0
\end{aligned}
\end{equation}
in the sense of Definition~\ref{def:Afree} and Definition~\ref{def:AR*free},
respectively, where  \(\bar
 u_0:=\int_{Y^\bblm} u(\cdot,y)\,\dy\) and \(\bar u_1:= u - \bar
u_0\). 
\end{theorem}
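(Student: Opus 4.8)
The plan is to prove the two implications separately, with the ``only if'' (compactness) direction being the easier one and the ``if'' (realization) direction carrying the main difficulty. For the \textbf{only if} direction, suppose $\{u_\epsi\}_\epsi \subset L^p(\Omega;\RRd)$ is $\Aa$-free and $\bR$-two-scale converges to $u \in L^p(\Omega\times Y^\bblm;\RRd)$. Testing the relation $\Aa u_\epsi = 0$ against oscillating test functions of the form $\varphi(x)\psi(\bR x/\epsi)$ with $\psi \in C^\infty_\#(Y^\bblm)$ and passing to the $\bR$-two-scale limit should produce a ``fast'' constraint on $u$: the oscillating part of the constraint forces $\Aa_{\bR^*}^y \bar u_1 = 0$, because differentiating $\psi(\bR x/\epsi)$ in $x_i$ brings down a factor $\epsi^{-1}(\bR^*)_{\cdot i}\cdot(\nabla_y\psi)$, and summing over $i$ against $A^{(i)}$ reconstructs the symbol $\mathbb{A}_{\bR^*}$ acting in the $y$-variable. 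Meanwhile, taking the plain weak $L^p(\Omega)$ limit of $\Aa u_\epsi = 0$ (which follows from $\bR$-two-scale convergence by choosing $y$-independent test functions, so $u_\epsi \weakly \bar u_0$) gives $\Aa \bar u_0 = 0$. Here one must be careful that weak convergence in $L^p$ combined with $\Aa u_\epsi = 0$ strongly in $W^{-1,p}$ passes to the limit; this is standard since $\Aa$ is a bounded operator $L^p \to W^{-1,p}$.

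For the \textbf{if} direction, given $u$ that is $(\Aa,\Aa_{\bR^*}^y)$-free, I must construct an $\Aa$-free recovery sequence. The natural ansatz is $u_\epsi(x) := \bar u_0(x) + w_\epsi(x)$, where $w_\epsi$ is built from $\bar u_1$ by the cut-and-project substitution $y \mapsto \bR x/\epsi$, suitably mollified. The difficulty is that the naive choice $u_\epsi(x) = u(x, \bR x/\epsi)$ is generally \emph{not} $\Aa$-free: applying $\Aa$ produces both a ``slow'' term $\Aa \bar u_0 = 0$ (good) plus an $\epsi^{-1}$-order ``fast'' term $\epsi^{-1}\mathbb{A}_{\bR^*}(\nabla_y \bar u_1)(x,\bR x/\epsi)$ which vanishes by hypothesis, but also cross terms and lower-order terms coming from the $x$-dependence of $\bar u_1$ and from the fact that $\bR x/\epsi$ is not a free variable. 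To handle this, the standard device (as in the periodic $\Aa$-free theory of Fonseca--Krömer \cite{FoKr10}) is to use the \emph{constant-rank} projection operator associated with $\Aa_{\bR^*}^y$: one first approximates $\bar u_1$ in $L^p(\Omega; L^p_\#(Y^\bblm;\RRd))$ by functions that are, in the $y$-variable, of the form $\mathbb{P}_{\bR^*}[\text{something}]$ plus gradients of potentials, exploiting that $\Aa_{\bR^*}^y$-free periodic fields with zero mean admit potentials modulo the kernel; then one corrects the non-$\Aa$-free error by solving an auxiliary problem $\Aa \phi_\epsi = -(\text{error})$ with $\|\phi_\epsi\|_{L^p} \to 0$, using that the error tends to $0$ strongly in $W^{-1,p}$ and invoking a right-inverse / Bogovskii-type bound for $\Aa$ available under the constant-rank hypothesis.

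\textbf{The main obstacle} I anticipate is precisely the realization step: producing an honestly $\Aa$-free sequence rather than merely an asymptotically $\Aa$-free one, and doing so compatibly with the $\bR$-two-scale convergence, in the $L^p$ (not Hilbert) setting where Fourier/Plancherel arguments of \cite{BoGuZo10} are unavailable. The key technical points are (a) a density result allowing one to reduce $\bar u_1$ to finite trigonometric-polynomial-type profiles in $y$ lying in the kernel of the symbol $\mathbb{A}_{\bR^*}$ — here the diophantine condition \eqref{eq:Rcriterion} is essential to guarantee that $\bR x/\epsi$ equidistributes so that $L^p_\#$-norms in $y$ match the $L^p$-behavior of the substituted functions; and (b) the $W^{-1,p}\to L^p$ correction, which requires the constant-rank property \eqref{constrank} to build a bounded linear right inverse of $\Aa$ (equivalently, to use the $\Aa$-free projection). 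Once the corrected sequence is in hand, checking that it still $\bR$-two-scale converges to $u$ is a routine consequence of the correction being small in $L^p$ and of the definition of $\bR$-two-scale convergence against admissible test functions. I would organize the argument as: (1) the compactness/necessity direction; (2) a lemma reducing $\bar u_1$ to a dense class of admissible profiles; (3) construction of the approximate recovery sequence and estimation of its $\Aa$-defect in $W^{-1,p}$; (4) the constant-rank correction; (5) verification of $\bR$-two-scale convergence of the final sequence.
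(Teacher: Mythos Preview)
Your overall architecture matches the paper's: necessity via oscillating test functions (the paper uses $\varphi_\epsi(x)=\epsi\,\phi(x)\psi(\bR x/\epsi)$ to isolate the fast constraint, exactly as you describe), and sufficiency via naive substitution, estimation of the $\Aa$-defect in $W^{-1,p}$, and a constant-rank correction. Steps (1), (3)--(5) of your outline are essentially what the paper does.

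The one place where your plan diverges --- and where you should be careful --- is step (2), the approximation of $\bar u_1$. You propose to use ``the constant-rank projection operator associated with $\Aa_{\bR^*}^y$'' and to represent $\Aa_{\bR^*}^y$-free periodic fields via ``potentials modulo the kernel''. This is problematic: the operator $\Aa_{\bR^*}$ acts on functions of $y\in\RRm$ with symbol $k\mapsto\mathbb{A}(\bR^*k)$, which vanishes on the $(\bblm-\bbln)$-dimensional subspace $\ker\bR^*$ and therefore \emph{fails} the constant-rank hypothesis on $\RRm\setminus\{0\}$. The diophantine condition \eqref{eq:Rcriterion} only says $\bR^*k\neq 0$ for \emph{integer} $k\neq 0$, which is not enough to make the mode-wise projection $k\mapsto\mathbb{P}(\bR^*k)$ into an $L^p$-bounded Fourier multiplier for $p\neq 2$. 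So the projection/potential machinery you invoke may simply not be available here.

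The paper sidesteps this entirely with a much simpler device: since $\bar u_1$ is \emph{already} $\Aa_{\bR^*}^y$-free, one just mollifies it in both $x$ and $y$ with standard (respectively periodic) kernels. Mollification commutes with the constant-coefficient constraint $\Aa_{\bR^*}$, so the smooth approximants $\tilde u_j$ remain $\Aa_{\bR^*}^y$-free and have zero $y$-mean. For such smooth $\tilde u_j$ the substitution $v_\epsi(x):=\tilde u_j(x,\bR x/\epsi)$ satisfies $\Aa v_\epsi(x)=(\Aa_x\tilde u_j)(x,\bR x/\epsi)$ \emph{exactly} (the $\epsi^{-1}$ term cancels by $\Aa_{\bR^*}\tilde u_j=0$), which is bounded in $L^p$ with weak limit $0$, hence $\to 0$ in $W^{-1,p}$. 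The correction is then done via Lemma~\ref{lem:2.8FK}, which uses only the constant-rank property of $\Aa$ on $\RRn$ --- where it holds by assumption. Replace your step (2) by plain mollification and the rest of your outline goes through.
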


We prove Theorem~\ref{thm:main2} in Section~\ref{subs:2scAfree},
where we use similar arguments to those in  \cite{FoKr10} concerning the
periodic case.
We observe that the sufficient part in Theorem~\ref{thm:main2},
which guarantees that \eqref{eq:charR2sclim} fully characterizes the \(\bR\)-two-scale
limits, is new in the literature even for \(p=2\) and \(\Aa=\rm
curl\) or \(\Aa=\rm div\) treated in  \cite{BoGuZo10, WeGuCh19}.
Furthermore, in Section~\ref{sec:counting} we give an alternative proof of Theorem~\ref{thm:main2} for the \(\Aa=\rm curl\) case using  arguments
 based on Fourier analysis
that differ from those in \cite{BoGuZo10, WeGuCh19} because Parseval's
and Plancherel's identities do not hold for \(p\not=2\).
This
alternative proof provides an equivalent alterative characterization
for the \(\bR\)-two-scale limit of bounded sequences in \(W^{1,p}\),
and may provide  useful
arguments to study homogenization problems involving  quasi-crystalline functionals  in the \(\Aa=\rm curl\) case. {\color{blue}{}} This alternative characterization can be stated as follows. 

 \begin{theorem}\label{thm:alterchaRl}
 Let \(\bR\in \RRmn \) satisfy \eqref{eq:Rcriterion} and let \(Y^\bblm\subset\RRm\)
be a parallelotope. Then, a
function
\(v\in L^p(\Omega\times Y^\bblm;\RRn)\) is the \(\bR\)-two-scale
limit of a sequence \(\{\grad v_\epsi\}_\epsi\) with    \(\{v_\epsi\}_\epsi \)
bounded in \(W^{1,p}(\Omega)\) if and only if there exist \(v_0\in W^{1,p}(\Omega) \) and
\(v_1\in L^p(\Omega; \Gg_{\bR}^p)\)
such that
\begin{equation*}
\begin{aligned}
v= \grad v_0 + v_1,
\end{aligned}
\end{equation*}%
where 
\begin{equation}
\label{eq:spaceGR}
\begin{aligned}
\Gg_{\bR}^p:= \Big\{ w \in L^p_\#(Y^\bblm; \RRn)\!: \,   \hat
w_k = \lambda_k \bR^*k  \text{ for some } \{\lambda_k\}_{k\in
\ZZ^\bblm } \subset \Cc \text{ with } \lambda_0=0\Big\}
\end{aligned} 
\end{equation}
with  \(\hat w_k:= \fint_{Y^\bblm} w(y)e^{-2\pi i k \cdot
y} \, \dy\), \(k\in \ZZ^\bblm \), denoting the Fourier coefficients
of \(w\). 
\end{theorem}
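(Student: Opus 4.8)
The plan is to derive Theorem~\ref{thm:alterchaRl} from the already-established Theorem~\ref{thm:main2} applied to the operator $\Aa = \curl$, by translating the abstract characterization \eqref{eq:charR2sclim} into the explicit decomposition $v = \grad v_0 + v_1$. For $\Aa = \curl$ on $\RRn$-valued fields, the constant-rank hypothesis holds, and a field $u \in L^p(\Omega\times Y^\bblm;\RRn)$ is the $\bR$-two-scale limit of a sequence $\{\grad v_\epsi\}_\epsi$ (with $\{v_\epsi\}$ bounded in $W^{1,p}$, hence $\{\grad v_\epsi\}$ an $\Aa$-free bounded sequence) if and only if $\Aa \bar u_0 = 0$ and $\Aa_{\bR^*}^y \bar u_1 = 0$, where $\bar u_0 = \fint_{Y^\bblm} u(\cdot,y)\,\dy$ and $\bar u_1 = u - \bar u_0$. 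The first condition says $\bar u_0$ is curl-free on $\Omega$; since $\Omega$ may not be simply connected, I would not directly claim $\bar u_0 = \grad v_0$, but rather recall (as in the periodic theory, e.g.\ \cite{FoMu99}) that the relevant sequences are themselves gradients, so one recovers $v_0 \in W^{1,p}(\Omega)$ with $\grad v_0 = \bar u_0$ as the weak $L^p$-limit of $v_\epsi$ (after subtracting mean values on a fixed ball); this is the routine part. So one sets $v_0$ to be this weak limit and $v_1 := \bar u_1 = u - \grad v_0$.

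The substantive step is to show that the condition ``$\Aa_{\bR^*}^y \bar u_1 = 0$, $\bar u_1$ has zero mean over $Y^\bblm$'' is equivalent to ``$\bar u_1 \in \Gg_{\bR}^p$'', i.e.\ to the Fourier-side description $\hat w_k = \lambda_k \bR^* k$ with $\lambda_0 = 0$. First I would unwind the definition of $\Aa_{\bR^*}^y$: for $\Aa = \curl$, the symbol is $\mathbb{A}(w) = w \wedge \cdot$ (antisymmetrized tensor), so the $y$-operator $\Aa_{\bR^*}^y$ acts with symbol $\mathbb{A}(\bR^* \eta)$ for frequencies $\eta$, and being in its kernel on $Y^\bblm$-periodic fields means exactly that each Fourier mode $\hat w_k$ lies in $\ker \mathbb{A}(\bR^* k)$. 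Now $\ker \mathbb{A}(\xi)$ for the curl operator is precisely the line $\RR \xi$ when $\xi \ne 0$ (curl-free wave with wave-vector $\xi$ is a multiple of $\xi$), and is all of $\RRn$ when $\xi = 0$. The diophantine condition \eqref{eq:Rcriterion} guarantees $\bR^* k \ne 0$ for all $k \in \ZZ^\bblm\setminus\{0\}$, so for $k \ne 0$ the constraint forces $\hat w_k = \lambda_k \bR^* k$ for a scalar $\lambda_k \in \Cc$; the zero-mean condition is exactly $\hat w_0 = 0$, i.e.\ $\lambda_0 = 0$ (absorbing the $k=0$ mode). This matches \eqref{eq:spaceGR} verbatim, giving $\bar u_1 \in \Gg_{\bR}^p$, and conversely any $w \in \Gg_{\bR}^p$ has every mode in the right kernel, so $\Aa_{\bR^*}^y w = 0$ with zero mean. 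Thus the equivalence is established and $v_1 := \bar u_1$ does the job; conversely, given a decomposition $v = \grad v_0 + v_1$ with $v_1 \in L^p(\Omega; \Gg_{\bR}^p)$, I reverse the argument to check $\Aa \bar u_0 = 0$ and $\Aa_{\bR^*}^y \bar u_1 = 0$ and invoke the ``if'' direction of Theorem~\ref{thm:main2}.

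Two points require care. The first is the computation of the kernel of the curl symbol and the precise form of $\Aa_{\bR^*}^y$ on $L^p_\#(Y^\bblm)$: one must verify that the distributional/Fourier-multiplier definition of $\Aa_{\bR^*}^y$ given in Section~\ref{subs:2scAfree} indeed reduces, mode by mode, to the algebraic condition $\hat w_k \in \ker \mathbb{A}(\bR^* k)$ for $L^p$ fields (not just $L^2$), which is a standard but not entirely trivial Fourier-multiplier fact on the torus; here I would argue by testing against trigonometric polynomials, which are dense, rather than invoking Plancherel. The second is the regularity/integrability bookkeeping: membership of $v_1 = u - \grad v_0$ in $L^p(\Omega; \Gg_{\bR}^p)$ (rather than just $L^p(\Omega\times Y^\bblm;\RRn)$) follows because $\Gg_{\bR}^p$ is a closed subspace of $L^p_\#(Y^\bblm;\RRn)$ and the slice-wise constraint $\Aa_{\bR^*}^y \bar u_1(\cdot,x) = 0$ holds for a.e.\ $x$ — I would make this precise using a Fubini-type argument and the fact that the constraint \eqref{eq:charR2sclim} as stated already encodes the $y$-periodic structure. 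The main obstacle, then, is not conceptual but the faithful identification of the abstract operator $\Aa_{\bR^*}^y$ with its Fourier-multiplier action in the $L^p$ (non-Hilbert) setting; everything else is a transcription of the periodic curl theory through the $\bR^*$-pullback.
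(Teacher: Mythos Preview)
Your approach is genuinely different from the paper's. The paper does \emph{not} derive Theorem~\ref{thm:alterchaRl} from Theorem~\ref{thm:main2}; instead, Section~\ref{sec:counting} gives a self-contained Fourier-analytic argument (Propositions~\ref{prop:gradR2sc} and~\ref{prop:reverseR2sc}) that proves both directions directly, with the recovery sequence in Proposition~\ref{prop:reverseR2sc} built explicitly as $u_{\epsi,N}(x) = u(x) + \epsi\,\bar w_N(x,\bR x/\epsi)$ using truncated Fourier series of $w$. The equivalence between membership in $\Gg_{\bR}^p$ and the $\Aa_{\bR^*}$-free condition is then deduced \emph{a posteriori} in Remark~\ref{rmk:relGRAR*} by combining Theorem~\ref{thm:main2} with Propositions~\ref{prop:gradR2sc}--\ref{prop:reverseR2sc}. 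Your route is more economical in that, granting Theorem~\ref{thm:main2}, the remaining work is the algebraic identification $\ker\mathbb{A}(\bR^*k)=\Cc\,\bR^*k$ (which your testing-against-exponentials argument handles correctly); the paper's route has the virtue of being independent of the $\Aa$-free extension machinery (Lemma~\ref{lem:2.8FK}) and of producing the recovery sequence manifestly in $W^{1,p}$.

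There is, however, a gap in your ``if'' direction. Invoking Theorem~\ref{thm:main2} yields a bounded \emph{curl-free} sequence $\{u_\epsi\}\subset L^p(\Omega;\RRn)$ with $u_\epsi\Rsc{\bR\text{-}2sc} v$, but Theorem~\ref{thm:alterchaRl} asks for $u_\epsi=\grad v_\epsi$ with $\{v_\epsi\}$ bounded in $W^{1,p}(\Omega)$. You correctly flagged the simple-connectedness issue for $\bar u_0$ in the ``only if'' direction and circumvented it via the weak $W^{1,p}$ limit of the given $v_\epsi$; in the ``if'' direction you have no a priori potential to fall back on, and $\Omega$ is not assumed simply connected. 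The paper's explicit construction avoids this entirely. Your route is salvageable: the proof of Proposition~\ref{prop:sufR2sc} (which underlies Theorem~\ref{thm:main2}) actually produces $u_\epsi\in L^p_\#(\Pi;\RRn)$ on a parallelotope $\Pi\supset\Omega$ via Lemma~\ref{lem:2.8FK}, and a mean-zero curl-free $Y^\bbln$-periodic field is always the gradient of a $W^{1,p}_\#$ potential (e.g.\ via $\Delta^{-1}\div$ and Mihlin multipliers on the torus), whose restriction to $\Omega$ furnishes the required $v_\epsi$. But this step must be made explicit; as written, you are tacitly assuming $\Omega$ simply connected, which Remark~\ref{rmk:relGRAR*} does but Theorem~\ref{thm:alterchaRl} does not.
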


\begin{remark}\label{rmk:reldifchar} We recall that if \(u_\epsi\in L^p(\Omega;\RRn)\) is  \(\rm curl\)-free in \(\RRn\) with \(\Omega\) simply connected, then there exists \(v_\epsi\in
W^{1,p}(\Omega)\) such that \(u_\epsi = \grad v_\epsi\). Thus,
in terms of the notations in the two previous results with \(\bbld=\bbln\), we have
\(\bar u_0 = \grad v_0\) and \(\bar u_1 = v_1\). In particular,
\eqref{eq:spaceGR} provides an alternative characterization of
\(\Aa_{\bR^*}\)- and \(\Aa_{\bR^*}^y\)-free vector fields introduced in Definition~\ref{def:AR*free} in the \(\Aa=\rm curl\) case
(also see Remark~\ref{rmk:relGRAR*} for a detailed argumentation). \end{remark}

\section{Notation and Preliminaries}\label{sect:notpre}

Throughout this manuscript,  \(\bblm,\,\bbln\in\Nn\) are
such that \(\bblm> \bbln\),  
\(\Omega\subset
\RRn \) is an open and bounded set,  \(Y^\bblm\) is a parallelotope
in \(\RRm\), \(\Pi\subset\RRn\) is a parallelotope
in \(\RRn\), \(\bbld\), \(\bbll\in\NN\), and   \(p\), \(p'\in (1,\infty)\) are such that \(\frac1p
+\frac1{p'}=1\). Moreover, we assume that
\(\epsi\) takes values on an arbitrary sequence of positive numbers
that converges to zero.

We use the subscript \(\#\) within function spaces to highlight
an underlying periodicity, in which case the domain indicates the periodicity cell. For instance, \(C_\#(Y^\bblm)=
\{u\in C(\RRm)\!:\, u \text{ is \(Y^\bblm\)-periodic}\} \) and
\(L^p_\#(\Pi)=
\{u\in L^p_{\rm loc}(\RRn)\!:\, u \text{ is \(\Pi\)-periodic}\} \).
Moreover, given a Lebesgue measurable  set $B\subset \RRk $, with \(\bblk\in\NN\),
we use the notation $\fint_B \cdot$ in place of  $ \frac{1}{\mathcal{L}^\bblk(B)}\int_B
\cdot$,
where $\mathcal{L}^\bblk(B)$ denotes
the \(\bblk\)-dimensional Lebesgue measure of  $B$. 

Next, we  compile the notation and main properties of the cut-and-project maps {\bf\textit{R}} and differential operators \(\Aa\)
introduced in the Introduction, and that we make use in the sequel.

\subsection{Cut-and-project maps {\bf\textit{R}}}\label{subs:Rmatrix}

 In this paper,   \(\bR:\RRn\to \RRm\) is a linear map, whose
associated matrix in \(\RRmn\) is also denoted by \(\bR\). We do not distinguish
between the transpose matrix and the adjoint  of \(\bR\), and denote both
 by $\bR^*$.  We often assume that the   criterion
\eqref{eq:Rcriterion} on \(\bR\),
\begin{equation*}
\begin{aligned}
\bR^*k\not=0\enspace \text{for all } k \in \ZZ^\bblm\backslash\{0\},
\end{aligned}
\end{equation*}
holds, in which case we   refer to it explicitly.

As shown in \cite{BoGuZo10}, if \(g:\RRm\to\RR\) is a trigonometric polynomial, then the ergodic mean of \(g\circ \bR\),  \(\mathscr{M}(g\circ \bR)\), is uniquely defined provided that \(\bR\) satisfies
\eqref{eq:Rcriterion}, in which case we have
\begin{equation*}
\begin{aligned}
\mathscr{M}(g\circ \bR):= \lim_{\tau \to \infty} \frac{1}{2\tau^\bbln} \int_{(-\tau,\tau)^\bbln} g(\bR x)\,\dx = \fint_{Y^\bblm} g(y)\,\dy,
\end{aligned}
\end{equation*}
where \(Y^\bblm\) is a parallelotope in \(\RRm\)
representing the periodicity cell of \(g\).

Throughout this manuscript, we consider functions  \(\sigma_R\) as   in  \eqref{eq:defqscrystal}. We 
 observe 
that such definition raises measurability issues. In
fact, we can only guarantee that  \(\sigma_R\) in \eqref{eq:defqscrystal}\ is measurable
provided that \(\sigma\) is Borel-measurable. We conjecture that there are functions  \(\sigma\in L^\infty(\RRm)\) for which the corresponding 
function \(\sigma_R\) in \eqref{eq:defqscrystal}\ is not  measurable.
This conjecture is based upon the observation that the pre-image of a measurable set \(B\subset\RRm\) through \(R\), \(R^{-1}(B)\), acts as a projection of the set \(B\) onto the lower-dimensional space \(\RRn\); moreover,
as it is well-know, the projection of a measurable set may not be measurable. To overcome this issue,  we take in 
\eqref{eq:defqscrystal}  the Borel representative of \(\sigma\).

\subsection{Differential operators \(\Aa\) with constant coefficients}\label{subs:Aoperators}
We consider homogeneous first-order  linear partial differential
operators with  constant coefficients, \(\Aa\), 
that map  $u : \Omega \to \RRd $ into $\sA u:  \Omega \to \RRl, $ of the form
\begin{equation*}
\begin{aligned}
\Aa u:= \sum_{i=1}^\bbln A^{(i)}\frac{\partial u}{\partial x_i}\quad
\text{with } A^{(i)}\in \RRld \text{ for all } i\in\{1,...,\bbln\}.
\end{aligned}
\end{equation*}
The formal  adjoint of \(\Aa\), which we denote by $\sA^*$,  maps $v : \Omega
\to \RRl$ into $\sA^* v : \Omega \to \RRd$ and is defined by
\begin{align*}
    \sA^* v:= - \sum_{i=1}^\bbln \big(A^{(i)}\big)^T
\frac{\partial v}{\partial x_i}. 
\end{align*}

We observe that \(\Aa\) can be viewed as a
bounded, linear operator
\(\Aa: L^p(\Omega;\RRd) \to W^{-1,p}(\Omega;\RRl)\) by setting
\begin{equation*}
\begin{aligned}
\langle \Aa u, v \rangle := \int_\Omega u \cdot \Aa^*v\,\dx
\end{aligned}
\end{equation*}
for all \(u\in L^p(\Omega;\RRd)  \) and \(v\in W^{1,p'}_0(\Omega;\RRl) \).
We observe further that if $u \in C^1_c(\Omega;\RRd)$ and  $v \in C^1_c(\Omega; \RRl)$, then
\begin{align*}
    \int_\Omega \sA u \cdot v \,\dx = \int_\Omega u \cdot \sA^*
v \,\dx 
\end{align*}
by integration by parts. Similarly, if $u \in C^1_\#(\Pi;\RRd)$ and  $v
\in C^1_\#(\Pi;\RRl)$, 
then
\begin{align*}
    \int_\Pi \sA u \cdot v \,\dx = \int_\Pi u \cdot \sA^*
v \,\dx. 
\end{align*}

We assume  that \(\A\) satisfies the constant-rank property, that
is, there exists \(\mathscr{r}\in\NN\) such that for all \(w\in\RRn\setminus
\{0\}\), we have \({\rm rank}\,\mathbb{A}(w) = \mathscr{r},\)
where \(\mathbb{A}:\RRn \to \RRld\) denotes the symbol of \(\Aa\), and is defined by \eqref{eq:Asymb}.
 As we mentioned in the Introduction, the constant-rank property is a  common assumption
within studies involving \(\Aa\)-free vector fields. We refer
the reader to \cite{FoMu99,Ta79,Mu81} for further insights on this property and on \(\Aa\)-free
fields, whose notion we recall next. 

\begin{definition}[$\sA$-free fields]\label{def:Afree}
(i) Given  $u \in L^p(\Omega; \RRd)$,  
we say that $\sA u$ \textit{exists
in }$L^p(\Omega;\RRl)$ if there exists a function $U \in L^p(\Omega;\RRl)$
such that, for every $\phi \in C^1_c(\Omega;\RRl)$, we have 
\begin{align}\label{Au=0Omega}
    \int_\Omega u \cdot \sA^* \phi \,\dx = \int_\Omega U \cdot
\phi \,\dx. 
\end{align}
In this case, we write $\sA u := U.$ We say that $u$ is $\sA$-free,
and write $\sA u = 0$, if  \eqref{Au=0Omega} holds with $U =0$.

(ii) Given  $v \in L^p_\#(\Pi; \RRd)$, 
we say that $\sA v$ \textit{exists
in }$L^p_\#(\Pi;\RRl)$ if there exists a function $V \in L^p_\#(\Pi;\RRl)$
such that, for every $\varphi \in C^1_\#(\Pi;\RRl)$, we have 
\begin{align}\label{Au=0Pi}
    \int_\Pi u \cdot \sA^* \varphi \,\dy = \int_\Pi V \cdot
\varphi \,\dy. 
\end{align}
In this case, we write $\sA v := V.$ We say that $v$ is $\sA$-free,
and write $\sA v = 0$, if  \eqref{Au=0Pi} holds with $V =0$.

\end{definition}

\begin{remark}[$\sA$ applied to vector fields depending on several variables]\label{rmk:Aseveralv}
Whenever a vector field  depends on two or more variables, we
index \(\Aa\) with the underlying variable to which \(\Aa\) is
being applied to the vector field. For instance, if \(u=u(x,y)\), then
\(\Aa_x u\) refers to \(\Aa\) applied to \(u\) as a function
of \(x\) with \(y\) regarded as a fixed parameter. Similarly,
\(\Aa_y u\) refers to \(\Aa\) applied to \(u\) as a function
of \(y\) with \(x\) regarded as a fixed parameter.\end{remark}

A crucial
result in the variational theory of $\sA$-free fields is the
following $\sA$-free periodic extension lemma, established in
 \cite[Lemma~2.15]{FoMu99}.
We make repeated use of a similar statement, also proved in 
\cite[Lemma~2.8]{FoKr10},  and hence record it here for the
readers' convenience. 

\begin{lemma}[$\sA$-free periodic extension]\label{lem:2.8FK}
Let $\Pi \subset \RRn$
be a parallelotope, let  $O \subset \Pi$ be an open set, let $1 < p < \infty$, and assume that $\sA$
satisfies \eqref{constrank}. Let $\{v_n\} \subset L^p(O;\RRd)$ be a p-equiintegrable sequence in \(O\), with    $v_n \rightharpoonup
0$ in $L^p(O;\RRd)$ and $\sA v_n \to 0$ in $W^{-1,p}(O;\RRl)$.
Then, 
there exist  an $\sA$-free sequence $\{u_n\} \subset L^p_{\#}(\Pi;\RRd)$,
that is $p$-equiintegrable in $\Pi$, and  a positive constat \(C=C(\sA)\)   such that 
\begin{align*}
    &u_n - v_n \to 0 \mbox{ in } L^p(O;\RRd),\quad u_n \to 0 \mbox{
in } L^p(\Pi \backslash O; \RRd),\quad \fint_{\Pi} u_n \,\dy  = 0,\\
    &\|u_n\|_{L^p(\Pi;\RRd)} \leqslant C \|v_n\|_{L^p(O;\RRd)} \mbox{
for all } n \in \mathbb{N}. 
\end{align*}
\end{lemma}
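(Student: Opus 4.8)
The plan is to construct $u_n$ from $v_n$ in two moves: first cut off $v_n$ so that it becomes compactly supported inside $O$ and thus extends $\Pi$-periodically without producing spurious boundary contributions to $\sA$; then apply a Helmholtz-type projection onto $\sA$-free periodic fields to make the extension \emph{exactly} $\sA$-free, controlling the correction by the $W^{-1,p}$-size of its $\sA$. The central device is the projection $\mathbb{P}$ on $L^p_\#(\Pi;\RRd)$ defined on Fourier series by $\widehat{\mathbb{P}w}_0=\hat w_0$ and $\widehat{\mathbb{P}w}_k=P(k)\hat w_k$ for $k\neq0$, where $P(k)\colon\RRd\to\RRd$ is the orthogonal projection onto $\ker\mathbb{A}(k)$ and $\mathbb{A}$ is the symbol \eqref{eq:Asymb}. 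Since $P(\cdot)$ is homogeneous of degree $0$, the constant-rank hypothesis \eqref{constrank} guarantees that $P$ is smooth on $\RRn\setminus\{0\}$; the Mikhlin--Marcinkiewicz multiplier theorem then renders $\mathbb{P}\colon L^p_\#(\Pi;\RRd)\to L^p_\#(\Pi;\RRd)$ bounded for every $p\in(1,\infty)$, and $\mathbb{P}w$ is $\sA$-free because $\widehat{\sA\mathbb{P}w}_k=2\pi i\,\mathbb{A}(k)P(k)\hat w_k=0$ for $k\neq0$ while the mean mode is annihilated by the first-order operator.

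The key estimate I would establish is $\|w-\mathbb{P}w\|_{L^p_\#(\Pi)}\le C\,\|\sA w\|_{W^{-1,p}_\#(\Pi)}$ for all $w\in L^p_\#(\Pi;\RRd)$, with $C=C(\sA)$. Since $\widehat{\sA w}_k=2\pi i\,\mathbb{A}(k)\hat w_k$, on $(\ker\mathbb{A}(k))^\perp=\mathrm{range}(\mathrm{Id}-P(k))$ the map $\mathbb{A}(k)$ is injective with pseudo-inverse $\mathbb{A}(k)^\dagger$ satisfying $\|\mathbb{A}(k)^\dagger\|\le C/|k|$; here constant rank is again exactly what furnishes both the uniform bound and the smooth dependence on $k$. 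Writing $(\mathrm{Id}-P(k))\hat w_k=\mathbb{A}(k)^\dagger\,\mathbb{A}(k)\hat w_k$ and factoring out the $W^{-1,p}$ weight exhibits $\mathrm{Id}-\mathbb{P}$ as the composition of $w\mapsto\sA w$ (into $W^{-1,p}$) with a smooth, degree-zero multiplier bounded on $L^p$, which yields the estimate.

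With $\mathbb{P}$ in hand I would address the extension. Extending $v_n$ by zero is not permitted, because the jump across $\partial O$ would make $\sA$ of the extension carry a singular boundary part. To avoid this, pick $O_j\Subset O$ with $|O\setminus O_j|\to0$ and cutoffs $\theta_j\in C^\infty_c(O)$, $0\le\theta_j\le1$, $\theta_j\equiv1$ on $O_j$, and set $\hat v_n:=\theta_{j(n)}v_n$, extended by zero and $\Pi$-periodically. Since $\hat v_n$ vanishes near $\partial O$, its periodic $\sA$ is the zero-extension of $\sA(\theta_{j(n)}v_n)=\theta_{j(n)}\sA v_n+\sum_i A^{(i)}(\partial_i\theta_{j(n)})v_n$. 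For fixed $j$, the first term tends to $0$ in $W^{-1,p}$ because $\sA v_n\to0$ there and multiplication by a smooth function is continuous on $W^{-1,p}$, while the commutator term converges weakly to $0$ in $L^p$, hence strongly to $0$ in $W^{-1,p}$ by the compact embedding $L^p\hookrightarrow\hookrightarrow W^{-1,p}$. By $p$-equiintegrability, $\sup_n\|(\theta_j-1)v_n\|_{L^p(O)}^p\le\sup_n\int_{O\setminus O_j}|v_n|^p\to0$; a diagonalization then fixes $j(n)\to\infty$ with both $\|\sA\hat v_n\|_{W^{-1,p}_\#(\Pi)}\to0$ and $\|\hat v_n-v_n\|_{L^p(O)}\to0$.

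Finally, set $u_n:=\mathbb{P}\hat v_n-\fint_\Pi\mathbb{P}\hat v_n$, which is $\sA$-free, periodic, and mean-zero. Writing $\mathbb{P}\hat v_n=\hat v_n-(\mathrm{Id}-\mathbb{P})\hat v_n$ and invoking the key estimate gives $\|(\mathrm{Id}-\mathbb{P})\hat v_n\|_{L^p(\Pi)}\le C\|\sA\hat v_n\|_{W^{-1,p}}\to0$, from which all conclusions follow at once: $u_n\to0$ on $\Pi\setminus O$ (where $\hat v_n=0$); $u_n-v_n\to0$ in $L^p(O)$ (combining with $\hat v_n-v_n\to0$ and the vanishing mean); the bound $\|u_n\|_{L^p(\Pi)}\le C\,\|v_n\|_{L^p(O)}$; and the $p$-equiintegrability of $\{u_n\}$, since $\hat v_n$ is equiintegrable (a bounded multiple of the equiintegrable $v_n$) and $(\mathrm{Id}-\mathbb{P})\hat v_n\to0$ strongly in $L^p$ is equiintegrable, so their difference, and thus $u_n$, is equiintegrable. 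I expect the main obstacle to be the construction and the $L^p$-continuity of $\mathbb{P}$ together with the companion estimate $\|w-\mathbb{P}w\|_{L^p}\le C\|\sA w\|_{W^{-1,p}}$: this is where \eqref{constrank} is indispensable, as it is precisely what makes the relevant symbols smooth and homogeneous of degree $0$ so that Mikhlin--Marcinkiewicz applies for every $p\in(1,\infty)$; by contrast, the boundary-jump issue and the equiintegrability are comparatively soft, being dispatched by the cutoff with the compactness of $L^p\hookrightarrow W^{-1,p}$ and by the strong $L^p$-smallness of $(\mathrm{Id}-\mathbb{P})\hat v_n$.
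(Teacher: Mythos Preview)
Your proposal is correct and is precisely the argument the paper defers to: the paper does not prove the lemma directly but cites \cite[Lemma~2.15]{FoMu99} and \cite[Lemma~2.8]{FoKr10} for the case $\Pi=(0,1)^{\bbln}$ and then invokes an affine change of variables, and the Fourier-multiplier projection $\mathbb{P}$ with the Mikhlin--Marcinkiewicz estimate you describe is exactly the machinery used in those references. Your cutoff-then-project scheme, the key inequality $\|w-\mathbb{P}w\|_{L^p_\#}\le C\|\sA w\|_{W^{-1,p}_\#}$, and the diagonalization over $j$ all match that construction; the only point the paper adds is the remark that the general parallelotope reduces to the unit cube by an affine map, which you may want to note explicitly since your Fourier argument is implicitly written for $\Pi=(0,1)^{\bbln}$.
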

\begin{proof}
The proof of this lemma with $\Pi = (0,1)^\bbln$ can be found in \cite[Lemma~2.15]{FoMu99}
and \cite[Lemma~2.8]{FoKr10}. 
 The case in which \(\Pi \) is an arbitrary parallelotope
 follows
by an affine change of variables. 
\end{proof}

\section{Cut-and-project-two-scale convergence}\label{sect:cap2sc}

The notion of two-scale convergence was first introduced in the
\(L^2\) setting by Nguetseng \cite{Ng89}, and further developed by Allaire \cite{Al92}. Initially, it was used to provide a mathematical rigorous justification of the formal asymptotic expansions that are commonly adopted in the study of homogenization problems.
Posteriorly, the notion of two-scale convergence was extended, in particular,
to \(L^p\),   \(L^1\),  $BV$, and Besicovitch spaces
  \cite{LuNgWa02, BuFo15, Am98,  FeFo12, CaGa02},  and also to the multiple-scales case \cite{AlBr96,FoZa03,FeFo120},
that enhanced several variational homogenization studies hinged
on a \(\Gamma\)-convergence approach, such
as \cite{BaFo07,CiDaDe04,FoZa03 , FeFo12, Ne12}.

In this section,  we first
address the study of the notion of two-scale convergence
in the quasi-crystalline setting, which we refer to as cut-and-project-two-scale
convergence (or, for brevity,   \(\bR\)-two-scale
convergence), with \(\bR\) as in Section~\ref{subs:Rmatrix}. We then prove Theorem~\ref{thm:main2}. 

As we mentioned in the Introduction, the  \(\bR\)-two-scale convergence was introduced in  \cite{BoGuZo10} (also see \cite{WeGuCh19})
as an extension of the usual notion of two-scale convergence
to enable the study of composites whose underlying microstructure
has a quasi-crystalline feature. 
Using arguments  based on Fourier analysis,  the authors in  \cite{BoGuZo10} characterize the limit, with respect
to the \(\bR\)-two-scale convergence, of bounded sequences in
\(W^{1,2}\), while the authors in \cite{WeGuCh19}   characterize the limit associated with bounded sequences in \(L^2\) that are divergence-free or curl-free. Here, besides generalizing these results to the \(L^p\) setting, with \(1<p<\infty\), we provide a unified approach to all these cases by considering bounded sequences in \(L^p\) that are \(\Aa\)-free, with \(\A\) as in Section~\ref{subs:Aoperators}. Our arguments are close to those in   \cite{FoKr10} concerning
the
periodic case, and are hinged on properties of \(\Aa\)-free vector
fields.

We first introduce the definition of \(\bR\)-two-scale convergence
in \(L^p(\Omega;\RRk)\). We make use of the results in this section
with  \(\bblk\) equal to
either \(1\), \(\bbld\),  \(\bbll\), or \(\bbln\).
\begin{definition}[\(\bR\)-two-scale convergence]\label{def:R2sc}
We say that a sequence \(\{u_\epsi\}_{\epsi} \subset L^p(\Omega;\RRk)\) \(\bR\)-two-scale converges to a function \(u\in L^p(\Omega\times Y^\bblm;\RRk)\) if for all \(\ffi \in L^{p'}(\Omega;C_\#(Y^\bblm;\RRk))\) we have
\begin{equation}\label{eq:lim2sc}
\begin{aligned}
\lim_{\epsi\to0^+}\int_\Omega u_\epsi(x)\cdot  \ffi\Big(x, \frac{\bR x}{\epsi} \Big)\,\dx = \int_\Omega \fint_{Y^\bblm} u(x,y)\cdot  \ffi(x,y)\,\dx\dy
\end{aligned}
\end{equation}
and we write \(u_\epsi\Rsc{\bR\text{-}2sc} u \). 
\end{definition}

\begin{remark}[Uniqueness of  \(\bR\)-two-scale limits]\label{rmk:uniq}
There is uniqueness of the \(\bR\)-two-scale limit. In fact, if  \(\{u_\epsi\}_{\epsi} \subset L^p(\Omega;\RRk)\) and  \(u\), \(
 \tilde u\in L^p(\Omega\times
Y^\bblm;\RRk)\)  are such that  \(u_\epsi\Rsc{\bR\text{-}2sc} u \) and
 \(u_\epsi\Rsc{\bR\text{-}2sc} \tilde u \), then
\begin{equation*}
\begin{aligned}
\int_\Omega \fint_{Y^\bblm} (u(x,y) - \tilde u(x,y))\cdot  \ffi(x,y)\,\dx\dy
\end{aligned}
\end{equation*}
for all  \(\ffi \in L^{p'}(\Omega;C_\#(Y^\bblm;\RRk))\). Hence, \(u=\tilde
u\) \aev\ in \( \Omega\times Y^\bblm\).
\end{remark}

\begin{remark}[On the test functions for \(\bR\)-two-scale convergence]\label{rmk:testfct}
Assume that  \(\{u_\epsi\}_{\epsi}\) is a bounded sequence in
\(L^p(\Omega;\RRk)\). Then,   \(\{u_\epsi\}_{\epsi} \)  
\(\bR\)-two-scale
converges to a function \(u\in L^p(\Omega\times Y^\bblm;\RRk)\) if and only if  \eqref{eq:lim2sc} holds for all \(\ffi \in C_c^\infty(\Omega; C_\#^\infty(Y^\bblm;\RRk))\). To prove this statement, it suffices to use the density of  \(C_c^\infty(\Omega; C_\#^\infty(Y^\bblm;\RRk))\)  in \(L^{p'}(\Omega;C_\#(Y^\bblm;\RRk))\) and the boundedness of \(\{u_\epsi\}_\epsi\) in \(L^p(\Omega;\RRk)\).
\end{remark}

The next two propositions characterize the relationship between
the \(\bR\)-two-scale limit and the usual weak and strong limits
in \(L^p(\Omega;\RRk)\).  
\begin{proposition}\label{prop:R2sc-wc} Assume that  \(\{u_\epsi\}_{\epsi}
\subset L^p(\Omega;\RRk)\) is a sequence that \(\bR\)-two-scale
converges to a function \(u\in L^p(\Omega\times Y^\bblm;\RRk)\).
Then,  \(u_\epsi
\weakly \bar u_0\) weakly in \(L^p(\Omega;\RRk)\), where \(\bar
u
_0(\cdot)
:= \fint_{Y^\bblm} u(\cdot,y)\,\dy\). In particular,   \(\{u_\epsi\}_{\epsi}\)
is bounded in \(L^p(\Omega;\RRk)\).
\end{proposition}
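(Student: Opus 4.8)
The plan is to test the $\bR$-two-scale convergence against two special classes of functions and combine the resulting identities. First I would take arbitrary $\psi \in C_c^\infty(\Omega;\RRk)$ (equivalently, $\psi \in L^{p'}(\Omega;\RRk)$) and regard it as the test function $\ffi(x,y) = \psi(x)$, which is constant in the fast variable $y$ and hence lies in $L^{p'}(\Omega;C_\#(Y^\bblm;\RRk))$. Feeding this into \eqref{eq:lim2sc}, the oscillating argument $\bR x / \epsi$ plays no role, so the left-hand side is simply $\int_\Omega u_\epsi(x)\cdot\psi(x)\,\dx$, while the right-hand side becomes $\int_\Omega \fint_{Y^\bblm} u(x,y)\,\dy \cdot \psi(x)\,\dx = \int_\Omega \bar u_0(x)\cdot\psi(x)\,\dx$. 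Thus $\int_\Omega u_\epsi\cdot\psi\,\dx \to \int_\Omega \bar u_0\cdot\psi\,\dx$ for every $\psi$ in a dense subset of $L^{p'}(\Omega;\RRk)$, and indeed for all such $\psi$ by the very definition of the class of admissible test functions.

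Next I would address boundedness, since weak convergence in $L^p$ requires it (the functional analytic statement ``$\ell_\epsi(\psi)\to\ell(\psi)$ on a dense set plus uniform boundedness implies weak convergence'' needs the boundedness separately). To get boundedness I would test against $\ffi(x,y) = \psi(x)\, g(y)$ with $g \in C_\#(Y^\bblm)$ and more specifically against functions of the form needed to recover the $L^p$ norm via duality; concretely, one checks that for each fixed admissible $\ffi$, the sequence of numbers $\int_\Omega u_\epsi(x)\cdot\ffi(x,\bR x/\epsi)\,\dx$ converges and is therefore bounded, so by the uniform boundedness principle applied to the family of functionals $\ffi \mapsto \int_\Omega u_\epsi\cdot\ffi(x,\bR x/\epsi)\,\dx$ on the Banach space $L^{p'}(\Omega;C_\#(Y^\bblm;\RRk))$, the operator norms $\|\ffi\mapsto\int_\Omega u_\epsi\cdot\ffi(\cdot,\bR\cdot/\epsi)\|$ are uniformly bounded. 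It then remains to see that this operator norm controls $\|u_\epsi\|_{L^p(\Omega;\RRk)}$ from above up to a constant: this is because the map $\ffi \mapsto \ffi(\cdot,\bR\cdot/\epsi)$ from $L^{p'}(\Omega;C_\#(Y^\bblm;\RRk))$ into $L^{p'}(\Omega;\RRk)$ has norm $1$ and is surjective onto a dense subspace (functions independent of $y$ already suffice), so a standard duality argument recovers $\|u_\epsi\|_{L^p} \leqslant C$ uniformly in $\epsi$. Finally, with $\{u_\epsi\}_\epsi$ bounded in $L^p$ and $\int_\Omega u_\epsi\cdot\psi\,\dx \to \int_\Omega\bar u_0\cdot\psi\,\dx$ on a dense set, a routine $3\varepsilon$ argument upgrades this to $u_\epsi \weakly \bar u_0$ in $L^p(\Omega;\RRk)$.

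\textbf{Main obstacle.} The delicate point is the order of the argument: one cannot invoke weak convergence before establishing boundedness, and establishing boundedness directly from Definition~\ref{def:R2sc} requires the uniform boundedness principle rather than an elementary estimate, since a priori we only know that $u_\epsi$ pairs convergently (hence boundedly) against each fixed oscillating test function. The slightly technical step is verifying that the ``evaluation along $\bR x/\epsi$'' map is norm-nonincreasing and has dense range in $L^{p'}(\Omega;\RRk)$ so that control of the composed functionals transfers to control of $\|u_\epsi\|_{L^p}$; once this is in place, everything else is the standard interplay between dense-set testing and uniform bounds. Alternatively, and more cleanly, one may observe that Remark~\ref{rmk:testfct} presupposes boundedness, so the honest route is precisely the uniform-boundedness argument sketched above, after which the weak-limit identification is immediate.
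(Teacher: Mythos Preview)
Your argument is correct, but you are making it harder than it needs to be, and the paper's proof is considerably shorter. In your first paragraph you already observe that $y$-independent functions $\ffi(x,y)=\psi(x)$ with $\psi\in L^{p'}(\Omega;\RRk)$ lie in $L^{p'}(\Omega;C_\#(Y^\bblm;\RRk))$ and hence are admissible in Definition~\ref{def:R2sc}; plugging them into \eqref{eq:lim2sc} yields
\[
\lim_{\epsi\to0^+}\int_\Omega u_\epsi\cdot\psi\,\dx=\int_\Omega\bar u_0\cdot\psi\,\dx\qquad\text{for every }\psi\in L^{p'}(\Omega;\RRk).
\]
This is exactly the paper's proof, and it is already the \emph{definition} of $u_\epsi\weakly\bar u_0$ weakly in $L^p(\Omega;\RRk)$, since $(L^p)^*=L^{p'}$ for $1<p<\infty$. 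There is no need to pass through a dense subclass and then upgrade. Boundedness is then a \emph{consequence}, not a prerequisite: any weakly convergent sequence in a Banach space is bounded (Banach--Steinhaus applied to the functionals $\psi\mapsto\int_\Omega u_\epsi\cdot\psi\,\dx$ on $L^{p'}$).

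Your second and third paragraphs set up a uniform-boundedness argument on the larger space $L^{p'}(\Omega;C_\#(Y^\bblm;\RRk))$ and then a $3\varepsilon$ density argument; these steps are valid but redundant. The conceptual slip is in the sentence ``weak convergence in $L^p$ requires [boundedness]'': you have the implication backwards. Convergence of the pairings against \emph{all} of $L^{p'}$ is weak convergence, full stop; boundedness then follows. Your concern would be legitimate only if Definition~\ref{def:R2sc} restricted the test class to, say, $C_c^\infty$ (this is the situation of Remark~\ref{rmk:testfct}), but it does not.
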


\begin{proof}
Let   \(\phi \in L^{p'}(\Omega;\RRk)\), and set  \(\ffi(x,y):=
\phi(x)\) for \((x,y) \in \Omega \times Y^\bblm\). Then,      \(\ffi\in
L^{p'}(\Omega;C_\#(Y^\bblm;\RRk))\), and by \eqref{eq:lim2sc} we
have%
\begin{equation*}
\begin{aligned}
\lim_{\epsi\to0^+}\int_\Omega u_\epsi(x)\cdot \phi(x)\,\dx
&=\lim_{\epsi\to0^+}\int_\Omega u_\epsi(x)\cdot \ffi\Big(x, \frac{\bR
x}{\epsi} \Big)\,\dx\\ &= \int_\Omega \fint_{Y^\bblm} u(x,y)\cdot
\ffi
(x,y)\,\dx\dy =  \int_{\Omega} \bigg( \fint_{Y^\bblm} u(x,y)\,\dy
\bigg) \cdot\phi(x)\,\dx,
\end{aligned}
\end{equation*}
and this concludes the proof. 
\end{proof}

\begin{proposition}\label{prop:R2scSc}
Let  \(\{u_\epsi\}_{\epsi}
\subset L^p(\Omega;\RRk)\) and \(u\in L^p(\Omega;\RRk)\) be
such that \(u_\epsi\to
u\) in \(L^p(\Omega;\RRk)\) as \(\epsi\to 0^+\). Then,    \(u_\epsi\Rsc{\bR\text{-}2sc}
u \).
\end{proposition}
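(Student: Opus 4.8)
The plan is to verify the defining condition of Definition~\ref{def:R2sc} directly, with $u\in L^p(\Omega;\RRk)$ identified with the $y$-independent element $(x,y)\mapsto u(x)$ of $L^p(\Omega\times Y^\bblm;\RRk)$, so that the asserted limit \eqref{eq:lim2sc} becomes $\lim_{\epsi\to0^+}\int_\Omega u_\epsi(x)\cdot\ffi(x,\bR x/\epsi)\,\dx=\int_\Omega\fint_{Y^\bblm}u(x)\cdot\ffi(x,y)\,\dx\dy$ for every admissible $\ffi$. Since $u_\epsi\to u$ in $L^p$, the sequence $\{u_\epsi\}_\epsi$ is bounded in $L^p(\Omega;\RRk)$, so by Remark~\ref{rmk:testfct} it is enough to prove this for $\ffi\in C_c^\infty(\Omega;C_\#^\infty(Y^\bblm;\RRk))$. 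For such $\ffi$ I would expand in the fast variable, $\ffi(x,y)=\sum_{k\in\ZZ^\bblm}\widehat{\ffi}_k(x)\,e^{2\pi i k\cdot y}$ with $\widehat{\ffi}_k(x):=\fint_{Y^\bblm}\ffi(x,y)e^{-2\pi i k\cdot y}\,\dy\in C_c^\infty(\Omega)$; since $\ffi(x,\cdot)$ and all its $y$-derivatives are bounded uniformly in $x$, one has $\sup_{x\in\Omega}|\widehat{\ffi}_k(x)|\leq C_N(1+|k|)^{-N}$ for every $N$, so the series converges absolutely and uniformly, and setting $y=\bR x/\epsi$ turns the $k$-th mode into $\widehat{\ffi}_k(x)\,e^{2\pi i(\bR^*k)\cdot x/\epsi}$.

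Next, using that $\{u_\epsi\}_\epsi$ is bounded in $L^1(\Omega;\RRk)$ (as $\Omega$ is bounded) together with $\sum_k\sup_{\Omega}|\widehat{\ffi}_k|<\infty$, I would justify by dominated convergence over the counting measure on $\ZZ^\bblm$ that $\lim_{\epsi\to0^+}\int_\Omega u_\epsi(x)\cdot\ffi(x,\bR x/\epsi)\,\dx=\sum_{k\in\ZZ^\bblm}\lim_{\epsi\to0^+}\int_\Omega u_\epsi(x)\cdot\widehat{\ffi}_k(x)\,e^{2\pi i(\bR^*k)\cdot x/\epsi}\,\dx$, once each inner limit is shown to exist. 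To evaluate the $k$-th inner limit, split $\int_\Omega u_\epsi\cdot\widehat{\ffi}_k\,e^{2\pi i(\bR^*k)\cdot x/\epsi}\,\dx=\int_\Omega(u_\epsi-u)\cdot\widehat{\ffi}_k\,e^{2\pi i(\bR^*k)\cdot x/\epsi}\,\dx+\int_\Omega u\cdot\widehat{\ffi}_k\,e^{2\pi i(\bR^*k)\cdot x/\epsi}\,\dx$; the first term is bounded in modulus by $\|u_\epsi-u\|_{L^p(\Omega;\RRk)}\|\widehat{\ffi}_k\|_{L^{p'}(\Omega)}$, which vanishes as $\epsi\to0^+$, while for the second term, if $k=0$ it is the constant $\int_\Omega u\cdot\widehat{\ffi}_0\,\dx$, and if $k\neq0$ then $\bR^*k\neq0$ by the Diophantine condition \eqref{eq:Rcriterion}, so, extending $u\,\widehat{\ffi}_k$ by zero to $\RRn$, the integral is the Fourier transform of an $L^1(\RRn)$ function evaluated at $-\bR^*k/\epsi$ and hence tends to $0$ by the Riemann--Lebesgue lemma. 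Therefore the series collapses to $\int_\Omega u(x)\cdot\widehat{\ffi}_0(x)\,\dx=\int_\Omega u(x)\cdot\fint_{Y^\bblm}\ffi(x,y)\,\dy\,\dx$, which equals the right-hand side of \eqref{eq:lim2sc} by Fubini's theorem (the integrand being in $L^1(\Omega\times Y^\bblm)$ since $u\in L^1$ and $\ffi$ is bounded).

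I do not expect a serious analytic obstacle: the quasi-crystalline structure enters only through the single use of \eqref{eq:Rcriterion} guaranteeing $\bR^*k\neq0$ for $k\neq0$ — equivalently, the ergodic-mean identity recalled in Section~\ref{subs:Rmatrix} applied to the trigonometric polynomial $y\mapsto e^{2\pi i k\cdot y}$ — and everything else is the elementary strong-convergence estimate plus Riemann--Lebesgue. The only point demanding mild care is the interchange of $\sum_k$ with $\lim_{\epsi\to0^+}$, which is why I would work with test functions smooth in $y$ (so that their Fourier coefficients are summable uniformly in $x$) rather than merely continuous; alternatively, one can bypass the Fourier series altogether by reducing, through density of trigonometric polynomials in $C_\#(Y^\bblm;\RRk)$ and of finite sums $\sum_j\psi_j(x)e^{2\pi i k_j\cdot y}$ in $L^{p'}(\Omega;C_\#(Y^\bblm;\RRk))$, to test functions of the form $\psi(x)e^{2\pi i k\cdot y}$, running the same two-term split and absorbing the density error with the uniform bound $\sup_\epsi\|u_\epsi\|_{L^p(\Omega;\RRk)}<\infty$.
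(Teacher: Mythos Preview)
Your proof is correct. Both you and the paper use the same elementary split
\[
\int_\Omega u_\epsi\cdot\ffi\Big(x,\tfrac{\bR x}{\epsi}\Big)\dx
=\int_\Omega (u_\epsi-u)\cdot\ffi\Big(x,\tfrac{\bR x}{\epsi}\Big)\dx
+\int_\Omega u\cdot\ffi\Big(x,\tfrac{\bR x}{\epsi}\Big)\dx,
\]
control the first term by H\"older and strong convergence, and identify the limit of the second term as $\int_\Omega\fint_{Y^\bblm}u\cdot\ffi\,\dx\dy$. The difference lies only in this last step. The paper treats the second term in one line by invoking Proposition~\ref{prop:Radmissible} applied to $\psi(x,y)=u(x)\cdot\ffi(x,y)\in L^1(\Omega;C_\#(Y^\bblm))$, which has already been established and works directly for arbitrary $\ffi\in L^{p'}(\Omega;C_\#(Y^\bblm;\RRk))$. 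You instead reduce to smooth test functions via Remark~\ref{rmk:testfct}, Fourier-expand in $y$, and invoke the classical Riemann--Lebesgue lemma on $\RRn$ mode by mode, with condition~\eqref{eq:Rcriterion} guaranteeing $|\bR^*k|/\epsi\to\infty$ for $k\neq0$. Your route is self-contained (it does not appeal to Proposition~\ref{prop:Radmissible}) but noticeably longer; the paper's route is a one-line consequence of machinery it has already set up. In effect you are reproving, by a Fourier-analytic argument, the special case of Proposition~\ref{prop:Radmissible} that is needed here.
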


\begin{proof}
Let \(\ffi
\in L^{p'}(\Omega;C_\#(Y^\bblm;\RRk))\).  Using H\"older's inequality, the convergence  \(u_\epsi\to
u\) in \(L^p(\Omega;\RRk),\) and  Proposition~\ref{prop:Radmissible}
applied to \(\psi(x,y):= u(x)\cdot  \ffi(x,y)\), we get%
\begin{align*}
&\limsup_{\epsi \to0^+} \bigg| \int_\Omega u_\epsi(x)\cdot \ffi\Big(x,
\frac{\bR x}{\epsi}
\Big)\,\dx - \int_\Omega \fint_{Y^\bblm} u(x)\cdot \ffi(x,y)\,\dx\dy\bigg|
\\
&\quad  \leq \limsup_{\epsi \to0^+} \bigg(\Vert u_\epsi - 
u\Vert_{L^p(\Omega;\RRk)} \Vert \ffi\Vert_{L^{p'}(\Omega; C_\#(Y^\bblm;\RRk))}
\\&\hskip21.6mm+ \bigg| \int_\Omega u(x)\cdot \ffi\Big(x, \frac{\bR
x}{\epsi}
\Big)\,\dx - \int_\Omega \fint_{Y^\bblm} u(x)\cdot \ffi(x,y)\,\dx\dy\bigg|\bigg)
\\
&\quad =0. \qedhere
\end{align*}
\end{proof}

The proof of the following version of Riemann--Lebesgue's lemma may be found in \cite[Lemma~2.4]{BoGuZo10}.
This lemma will be used in the subsequent proposition, which
encodes non-trivial examples of sequences that \(\bR\)-two-scale
converge, and will be useful to prove compactness
of bounded sequences in \(L^p(\Omega;\RRk)\) with respect to the
 \(\bR\)-two-scale convergence.

\begin{lemma}[{c.f. \cite[Lemma~2.4]{BoGuZo10}}]\label{lem:RLforR}
Let \(\phi\in C_\#(Y^\bblm;\RRk)\), and assume that \(\bR\)
satisfies 
\eqref{eq:Rcriterion}.
Then, the sequence \(\{\phi_\epsi\}_\epsi \subset L^\infty(\Omega;\RRk)\)
defined by \(\phi_\epsi(x):= \phi\big(\frac{\bR x}{\epsi}\big)\),
 \(x\in\Omega\), converges weakly-\(\star\) in \(L^\infty(\Omega;\RRk)\)
to the constant function \(\bar\phi:=\fint_{Y^\bblm} \phi(y)\,\dy\).
\end{lemma}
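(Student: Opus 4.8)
The plan is to prove Lemma~\ref{lem:RLforR} by reducing the general continuous case to the case of a trigonometric polynomial, where the ergodic-mean formula from Section~\ref{subs:Rmatrix} applies directly, and then passing to the limit by density.

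First I would reduce to a single scalar component, so assume \(\bblk=1\). The first step is to establish the claim when \(\phi\) is a trigonometric polynomial, i.e.\ a finite sum \(\phi(y)=\sum_{k\in F} c_k e^{2\pi i k\cdot y}\) with \(F\subset\ZZ^\bblm\) finite. For such \(\phi\), the function \(x\mapsto\phi(\bR x/\epsi)\) is itself a finite sum of exponentials \(e^{2\pi i (\bR^* k)\cdot x/\epsi}\). By \eqref{eq:Rcriterion}, \(\bR^*k\neq0\) for \(k\neq0\), so each nonzero mode \(e^{2\pi i(\bR^*k)\cdot x/\epsi}\) is a genuine oscillation; testing against any \(\psi\in L^1(\Omega)\) and invoking the Riemann--Lebesgue lemma in the form recorded just above (cf.\ \cite[Lemma~2.4]{BoGuZo10}), or equivalently the ergodic-mean identity \(\mathscr M(g\circ\bR)=\fint_{Y^\bblm}g\), one gets that \(\int_\Omega\phi(\bR x/\epsi)\psi(x)\,\dx\to c_0\int_\Omega\psi\,\dx\), where \(c_0=\fint_{Y^\bblm}\phi(y)\,\dy=\bar\phi\). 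Since \(\{\phi_\epsi\}_\epsi\) is bounded in \(L^\infty(\Omega)\) uniformly in \(\epsi\) (by \(\|\phi\|_\infty\)), this gives weak-\(\star\) convergence \(\phi_\epsi\cws\bar\phi\) in \(L^\infty(\Omega)\) along the polynomial case.

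Next I would remove the polynomiality assumption. Given a general \(\phi\in C_\#(Y^\bblm)\), by the Stone--Weierstrass theorem pick trigonometric polynomials \(\phi^{(j)}\) with \(\|\phi^{(j)}-\phi\|_{C_\#(Y^\bblm)}\to0\). Then for any \(\psi\in L^1(\Omega)\),
\begin{align*}
\Big|\int_\Omega\phi\Big(\tfrac{\bR x}{\epsi}\Big)\psi(x)\,\dx-\bar\phi\int_\Omega\psi\,\dx\Big|
&\leq\|\phi-\phi^{(j)}\|_\infty\,\|\psi\|_{L^1(\Omega)}\\
&\quad+\Big|\int_\Omega\phi^{(j)}\Big(\tfrac{\bR x}{\epsi}\Big)\psi(x)\,\dx-\overline{\phi^{(j)}}\int_\Omega\psi\,\dx\Big|
+\big|\overline{\phi^{(j)}}-\bar\phi\big|\,\|\psi\|_{L^1(\Omega)}.
\end{align*}
For fixed \(j\), the middle term tends to \(0\) as \(\epsi\to0^+\) by the polynomial case; the first and third terms are controlled by \(\|\phi-\phi^{(j)}\|_\infty\,\|\psi\|_{L^1(\Omega)}\) (using \(|\overline{\phi^{(j)}}-\bar\phi|\leq\|\phi^{(j)}-\phi\|_\infty\)). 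Letting \(\epsi\to0^+\) and then \(j\to\infty\) shows the limit is \(0\) for every \(\psi\in L^1(\Omega)\), which is exactly weak-\(\star\) convergence in \(L^\infty(\Omega)\). Finally, reassemble the vector-valued statement componentwise to conclude for \(\phi\in C_\#(Y^\bblm;\RRk)\).

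The only genuine subtlety is the polynomial base case, which is where \eqref{eq:Rcriterion} is essential: without \(\bR^*k\neq0\) for \(k\neq0\) one could have spurious non-oscillating modes and the mean \(\bar\phi\) would not be the correct weak-\(\star\) limit. Since the ergodic mean \(\mathscr M(g\circ\bR)=\fint_{Y^\bblm}g\) for trigonometric polynomials has already been recorded in Section~\ref{subs:Rmatrix} under \eqref{eq:Rcriterion}, this step is essentially a citation; alternatively one may cite \cite[Lemma~2.4]{BoGuZo10} directly and skip straight to the density argument. Thus no step is a real obstacle — the proof is a standard density-plus-ergodic-average argument — and one could even state the whole thing as an immediate consequence of \cite[Lemma~2.4]{BoGuZo10} together with the uniform \(L^\infty\) bound.
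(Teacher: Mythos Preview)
Your argument is correct. Note, however, that the paper does not actually supply a proof of this lemma: it simply records the statement and refers the reader to \cite[Lemma~2.4]{BoGuZo10} for the proof. Your write-up therefore goes beyond the paper, giving the standard trigonometric-polynomial-plus-density argument that underlies the cited result; as you yourself observe in the final sentence, the paper's approach is precisely to invoke \cite[Lemma~2.4]{BoGuZo10} directly.
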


\begin{proposition}\label{prop:Radmissible}
Let \(\psi \in L^1(\Omega;C_\#(Y^\bblm;\RRk))\), and assume that \(\bR\)
satisfies 
\eqref{eq:Rcriterion}. Then \(\big\{\psi\big(\cdot, \frac{\bR\,\cdot}{\epsi}\big)\big\}_\epsi\)
is an equiintegrable sequence in \(L^1(\Omega;\RRk)\) such that
\begin{equation}
\label{eq:boundRadmissible}
\begin{aligned}
\Big\Vert \psi\Big(\cdot, \frac{\bR\,\cdot}{\epsi}\Big)\Big\Vert_{L^1(\Omega;\RRk)} \leq
\Vert \psi\Vert_{L^1(\Omega;C_\#(Y^\bblm;\RRk))} = \int_\Omega \sup_{y\in Y^\bblm} |\psi(x,y)|\,\dx
\end{aligned}
\end{equation}
and
\begin{equation}
\label{eq:limRadmissible}
\begin{aligned}
\lim_{\epsi\to0^+} \int_\Omega \psi\Big(x, \frac{\bR x}{\epsi}\Big)\,\dx =
\int_\Omega \fint_{Y^\bblm} \psi(x,y)\,\dx\dy.
\end{aligned}
\end{equation}
\end{proposition}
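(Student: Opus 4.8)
The plan is to prove Proposition~\ref{prop:Radmissible} in three steps: first establish the pointwise bound \eqref{eq:boundRadmissible} and equiintegrability, then prove the limit \eqref{eq:limRadmissible} for the dense class of functions that are finite sums $\psi(x,y)=\sum_j \eta_j(x)\phi_j(y)$ with $\eta_j \in L^1(\Omega)$ and $\phi_j \in C_\#(Y^\bblm;\RRk)$, and finally pass to the limit over general $\psi \in L^1(\Omega;C_\#(Y^\bblm;\RRk))$ using the density of such sums together with the uniform bound from the first step. For the first step, for fixed $x$ the function $y \mapsto \psi(x,y)$ is continuous and $Y^\bblm$-periodic, so $|\psi(x,\bR x/\epsi)| \leq \sup_{y\in Y^\bblm}|\psi(x,y)|$ pointwise; integrating in $x$ gives \eqref{eq:boundRadmissible}, and since $x \mapsto \sup_{y}|\psi(x,y)|$ is the norm density of $\psi$ in $L^1(\Omega;C_\#(Y^\bblm;\RRk))$, it is an $L^1(\Omega)$ function dominating the whole sequence, which yields equiintegrability.

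For the second step, by linearity it suffices to treat a single product $\psi(x,y)=\eta(x)\phi(y)$ with $\eta \in L^1(\Omega)$ and $\phi \in C_\#(Y^\bblm;\RRk)$; here I would invoke Lemma~\ref{lem:RLforR}, which gives that $\phi_\epsi := \phi(\bR\,\cdot/\epsi) \cws \bar\phi = \fint_{Y^\bblm}\phi(y)\,\dy$ weakly-$\star$ in $L^\infty(\Omega;\RRk)$. Testing this weak-$\star$ convergence against $\eta \in L^1(\Omega;\RRk)$ (componentwise, or against $\eta$ times the relevant coordinate vectors) gives
\begin{equation*}
\lim_{\epsi\to0^+}\int_\Omega \eta(x)\cdot\phi\Big(\frac{\bR x}{\epsi}\Big)\,\dx = \int_\Omega \eta(x)\cdot\bar\phi\,\dx = \int_\Omega \fint_{Y^\bblm}\eta(x)\cdot\phi(y)\,\dy\,\dx,
\end{equation*}
which is exactly \eqref{eq:limRadmissible} for this $\psi$. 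Summing over finitely many such terms handles all finite sums of products.

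For the third step, given arbitrary $\psi \in L^1(\Omega;C_\#(Y^\bblm;\RRk))$ and $\delta>0$, choose a finite sum $\psi_\delta$ of the above form with $\|\psi-\psi_\delta\|_{L^1(\Omega;C_\#(Y^\bblm;\RRk))}<\delta$; then split
\begin{equation*}
\Big|\int_\Omega \psi\Big(x,\tfrac{\bR x}{\epsi}\Big)\dx - \int_\Omega\fint_{Y^\bblm}\psi\,\dy\,\dx\Big| \leq \Big\|\psi-\psi_\delta\Big\|_{L^1(\Omega;C_\#)} + \Big|\int_\Omega\psi_\delta\Big(x,\tfrac{\bR x}{\epsi}\Big)\dx - \int_\Omega\fint_{Y^\bblm}\psi_\delta\,\dy\,\dx\Big| + \Big|\int_\Omega\fint_{Y^\bblm}(\psi-\psi_\delta)\,\dy\,\dx\Big|,
\end{equation*}
where the first term is bounded by $\delta$ via \eqref{eq:boundRadmissible} applied to $\psi-\psi_\delta$, the last term is bounded by $\|\psi-\psi_\delta\|_{L^1(\Omega;C_\#)}<\delta$ as well, and the middle term tends to $0$ as $\epsi\to0^+$ by Step~2. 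Letting $\epsi\to0^+$ and then $\delta\to0^+$ yields \eqref{eq:limRadmissible}. The main obstacle is really the density statement that finite sums $\sum_j\eta_j\otimes\phi_j$ are dense in $L^1(\Omega;C_\#(Y^\bblm;\RRk))$ — this is a standard fact about Bochner-space simple functions (approximate $\psi$ by a countably-valued simple function in the strong norm, then approximate each vector value in $C_\#(Y^\bblm;\RRk)$), but it is the one point where some care is needed; everything else is a routine combination of Lemma~\ref{lem:RLforR} with the dominated-convergence-type estimate above.
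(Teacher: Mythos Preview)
Your proposal is correct and follows essentially the same approach as the paper: establish \eqref{eq:boundRadmissible} and equiintegrability by the pointwise domination $|\psi(x,\bR x/\epsi)|\leq\sup_y|\psi(x,y)|$, invoke Lemma~\ref{lem:RLforR} for tensor products $\eta(x)\phi(y)$, and then pass to general $\psi$ by density in $L^1(\Omega;C_\#(Y^\bblm;\RRk))$ combined with the uniform bound. The only cosmetic difference is that the paper phrases the dense class as Bochner simple functions $\sum_k c_k\chi_{A_k}(x)\phi_k(y)$ rather than finite sums of general tensor products, but since their Step~1 already handles an arbitrary $L^1$ coefficient this is the same argument.
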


\begin{proof}
The proof of \eqref{eq:boundRadmissible} is immediate. Using this estimate (that holds with \(\Omega \)  replaced by any
measurable set) and the integrability of the map \(x\in\Omega\mapsto \sup_{y\in
Y^\bblm} |\psi(x,y)|\), we conclude that \(\big\{\psi\big(\cdot, \frac{\bR(\cdot)}{\epsi}\big)\big\}_\epsi\)
is  equiintegrable  in \(L^1(\Omega;\RRk)\). Finally, the proof
of \eqref{eq:limRadmissible}
follows along the lines of that of \cite[Lemma~2.5]{LuNgWa02},
which we detail next.

\textit{Step~1.} Assume that \(\psi \) is of the form \(\psi(x,y)
= \ffi(x) \phi(y)\) with \(\ffi \in L^1(\Omega)\) and  \(\phi\in C_\#(Y^\bblm;\RRk)\). Then, \eqref{eq:limRadmissible} follows from Lemma~\ref{lem:RLforR}.

\textit{Step~2.} Assume that \(\psi \) is of the form \(\psi(x,y)
=\sum_{k=1}^j c_k\chi_{A_k}(x) \phi_k(y)\), where \(j\in\NN\),
\(c_k\) are distinct real numbers, \(A_k\) are mutually disjoint
measurable subsets of \(\Omega\), and    \(\phi_k\in
C_\#(Y^\bblm;\RRk)\). Then, \eqref{eq:limRadmissible} follows from Step~1.

\textit{Step~3.} Let \(\psi \in L^1(\Omega;C_\#(Y^\bblm;\RRk))\). We can find a sequence \(\{\psi_j\}_{j\in\NN}\) of step functions
as in Step~2 such that \(\psi_j \to \psi \) in  \(L^1(\Omega;C_\#(Y^\bblm;\RRk))\)
as \(j\to\infty\). Fix \(j\in\NN\); in view of  \eqref{eq:boundRadmissible}, we have
\begin{equation*}
\begin{aligned}
& \bigg| \int_\Omega \psi\Big(x, \frac{\bR x}{\epsi}\Big)\,\dx
- \int_\Omega \fint_{Y^\bblm} \psi(x,y)\,\dx\dy \bigg| \\
&\quad \leq   \int_\Omega \bigg|\psi\Big(x, \frac{\bR x}{\epsi}\Big)- \psi_j\Big(x,
\frac{\bR x}{\epsi}\Big)\bigg|\,\dx + \bigg| \int_\Omega \psi_j\Big(x,
\frac{\bR x}{\epsi}\Big)\,\dx
- \int_\Omega \fint_{Y^\bblm} \psi_j(x,y)\,\dx\dy \bigg|  \\
&\qquad +  \int_\Omega \fint_{Y^\bblm} \big| \psi_j(x,y)- \psi(x,y)
\big| \,\dx\dy \\
& \quad \leq \big(1+[{\mathcal{L}^\bblm(Y^\bblm)]^{-1}}\big)\Vert \psi - \psi_j\Vert_{L^1(\Omega;C_\#(Y^\bblm;\RRk))}
+ \bigg| \int_\Omega \psi_j\Big(x,
\frac{\bR x}{\epsi}\Big)\,\dx
- \int_\Omega \fint_{Y^\bblm} \psi_j(x,y)\,\dx\dy \bigg|.
\end{aligned}
\end{equation*}
Letting \(\epsi\to 0^+\) and using Step~2  first, and then letting
\(j\to \infty\), we obtain \eqref{eq:limRadmissible} from the
convergence    \(\psi_j \to \psi \) in  \(L^1(\Omega;C_\#(Y^\bblm;\RRk))\)
as \(j\to\infty\).
 \end{proof}
 
 \begin{corollary}\label{cor:R2sc}
 Let \(\psi \in L^p(\Omega;C_\#(Y^\bblm;\RRk))\), and assume that \(\bR\)
satisfies 
\eqref{eq:Rcriterion}. Then, \(\big\{\psi\big(\cdot, \frac{\bR\,\cdot}{\epsi}\big)\big\}_\epsi\)
is a $p$-equiintegrable sequence in \(L^p(\Omega;\RRk)\) that \(\bR\)-two-scale converges to \(\psi\).
\end{corollary}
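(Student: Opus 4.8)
\textbf{Proof proposal for Corollary~\ref{cor:R2sc}.}

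The plan is to obtain both conclusions from Proposition~\ref{prop:Radmissible} by a suitable choice of test integrand. First I would establish the $p$-equiintegrability of the sequence $\{\psi(\cdot,\tfrac{\bR\,\cdot}{\epsi})\}_\epsi$. Since $\psi\in L^p(\Omega;C_\#(Y^\bblm;\RRk))$, the scalar map $x\mapsto |\psi(x,\cdot)|_{C_\#(Y^\bblm;\RRk)}^p = \sup_{y\in Y^\bblm}|\psi(x,y)|^p$ belongs to $L^1(\Omega)$; moreover the function $\Psi(x,y):=|\psi(x,y)|^p$ lies in $L^1(\Omega;C_\#(Y^\bblm))$. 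Applying the bound \eqref{eq:boundRadmissible} of Proposition~\ref{prop:Radmissible} to $\Psi$, with $\Omega$ replaced by an arbitrary measurable subset $E\subset\Omega$, gives
\[
\int_E \Big|\psi\Big(x,\frac{\bR x}{\epsi}\Big)\Big|^p\,\dx \leq \int_E \sup_{y\in Y^\bblm}|\psi(x,y)|^p\,\dx,
\]
and the right-hand side tends to $0$ uniformly in $\epsi$ as $\mathcal L^\bbln(E)\to 0$ by absolute continuity of the integral of the fixed $L^1$ function $x\mapsto \sup_y|\psi(x,y)|^p$. This is exactly $p$-equiintegrability. (In particular, taking $E=\Omega$ shows the sequence is bounded in $L^p(\Omega;\RRk)$.)

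Next I would prove the $\bR$-two-scale convergence $\psi(\cdot,\tfrac{\bR\,\cdot}{\epsi})\Rsc{\bR\text{-}2sc}\psi$. By Remark~\ref{rmk:testfct}, since the sequence is bounded in $L^p(\Omega;\RRk)$, it suffices to verify \eqref{eq:lim2sc} against test functions $\ffi\in C_c^\infty(\Omega;C_\#^\infty(Y^\bblm;\RRk))$ — in fact against all $\ffi\in L^{p'}(\Omega;C_\#(Y^\bblm;\RRk))$, which is what I would do directly. Fix such a $\ffi$ and set $\zeta(x,y):=\psi(x,y)\cdot\ffi(x,y)$. The key observation is that $\zeta\in L^1(\Omega;C_\#(Y^\bblm))$: for each fixed $x$ the map $y\mapsto \psi(x,y)\cdot\ffi(x,y)$ is continuous and $Y^\bblm$-periodic with $\sup_{y}|\zeta(x,y)|\leq (\sup_y|\psi(x,y)|)(\sup_y|\ffi(x,y)|)$, and the product of the two factors is integrable over $\Omega$ by H\"older's inequality since the first lies in $L^p(\Omega)$ and the second in $L^{p'}(\Omega)$. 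Applying \eqref{eq:limRadmissible} of Proposition~\ref{prop:Radmissible} to $\zeta$ yields
\[
\lim_{\epsi\to0^+}\int_\Omega \psi\Big(x,\frac{\bR x}{\epsi}\Big)\cdot\ffi\Big(x,\frac{\bR x}{\epsi}\Big)\,\dx = \int_\Omega\fint_{Y^\bblm}\psi(x,y)\cdot\ffi(x,y)\,\dx\dy,
\]
which is precisely \eqref{eq:lim2sc} with $u=\psi$. Hence $\psi(\cdot,\tfrac{\bR\,\cdot}{\epsi})\Rsc{\bR\text{-}2sc}\psi$.

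I do not anticipate a serious obstacle here, as the corollary is essentially a repackaging of Proposition~\ref{prop:Radmissible}; the only point requiring a little care is the measurability/integrability bookkeeping showing that $|\psi(x,\cdot)|^p$ and $\psi(x,\cdot)\cdot\ffi(x,\cdot)$ are genuine elements of $L^1(\Omega;C_\#(Y^\bblm))$ (respectively $L^1(\Omega;C_\#(Y^\bblm))$) so that Proposition~\ref{prop:Radmissible} applies — this uses the standard fact that if $g\in L^p(\Omega;C_\#(Y^\bblm;\RRk))$ then $x\mapsto|g(x,\cdot)|^p_{C_\#}$ is a measurable, integrable scalar function, together with H\"older's inequality on the $\Omega$-variable for the product. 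Once that is in place, both assertions follow immediately from the two displayed conclusions of Proposition~\ref{prop:Radmissible}, applied to $\Psi=|\psi|^p$ and to $\zeta=\psi\cdot\ffi$ respectively.
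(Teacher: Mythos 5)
Your proposal is correct and follows essentially the same route as the paper: the paper likewise obtains the $p$-equiintegrability by applying Proposition~\ref{prop:Radmissible} to $|\psi|^p$ and the $\bR$-two-scale convergence by applying \eqref{eq:limRadmissible} to the product $\psi\cdot\ffi$ with $\ffi\in L^{p'}(\Omega;C_\#(Y^\bblm;\RRk))$ arbitrary. Your extra bookkeeping (the restriction of \eqref{eq:boundRadmissible} to measurable subsets $E\subset\Omega$ and the H\"older check that $\psi\cdot\ffi\in L^1(\Omega;C_\#(Y^\bblm))$) simply makes explicit what the paper leaves implicit.
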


\begin{proof}
The $p$-equiintegrability assertion follows from Proposition~\ref{prop:Radmissible}
applied to \(|\psi|^p\). The  \(\bR\)-two-scale
convergence assertion follows from \eqref{eq:limRadmissible}
with \(\psi\) replaced by \(\psi\ffi\), where \(\ffi \in L^{p'}(\Omega;C_\#(Y^\bblm;\RRk))\) is an arbitrary function. 
\end{proof}

Using the previous proposition, we   establish next a compactness property with respect to the
 \(\bR\)-two-scale convergence.

\begin{proposition}\label{prop:compactenessR2sc}
Let   \(\{u_\epsi\}_{\epsi} \subset
L^p(\Omega;\RRk)\) be a bounded sequence, and assume that \(\bR\)
satisfies 
\eqref{eq:Rcriterion}. Then, there exist a subsequence \(\epsi'\preceq
\epsi\) and  a function \(u\in L^p(\Omega\times Y^\bblm;\RRk)\)
such that \(u_{\epsi'}\Rsc{\bR\text{-}2sc}
u \).
\end{proposition}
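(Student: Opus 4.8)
The plan is to mimic the classical proof of compactness for two-scale convergence (as in Allaire or Lukkassen--Nguetseng--Wall), replacing the Riemann--Lebesgue lemma for periodic test functions by its $\bR$-version, Lemma~\ref{lem:RLforR}, and its consequences recorded in Proposition~\ref{prop:Radmissible} and Corollary~\ref{cor:R2sc}. Concretely, I would argue by a duality/functional-analytic argument. Since $\{u_\epsi\}_\epsi$ is bounded in $L^p(\Omega;\RRk)$, each $u_\epsi$ induces a bounded linear functional $L_\epsi$ on the separable Banach space $X:=L^{p'}(\Omega;C_\#(Y^\bblm;\RRk))$ via
\begin{equation*}
\begin{aligned}
L_\epsi(\ffi):= \int_\Omega u_\epsi(x)\cdot \ffi\Big(x,\frac{\bR x}{\epsi}\Big)\,\dx.
\end{aligned}
\end{equation*}
By H\"older's inequality and the bound \eqref{eq:boundRadmissible} in Proposition~\ref{prop:Radmissible} applied to $|\ffi|^{p'}$ (which gives $\|\ffi(\cdot,\bR(\cdot)/\epsi)\|_{L^{p'}(\Omega;\RRk)}\le \|\ffi\|_X$), we get $|L_\epsi(\ffi)|\le \|u_\epsi\|_{L^p(\Omega;\RRk)}\|\ffi\|_X \le C\|\ffi\|_X$, so $\{L_\epsi\}_\epsi$ is bounded in $X^*$. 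Since $X$ is separable, the Banach--Alaoglu theorem yields a subsequence $\epsi'\preceq\epsi$ and $L\in X^*$ with $L_{\epsi'}\weaklystar L$ in $X^*$, i.e. $L_{\epsi'}(\ffi)\to L(\ffi)$ for all $\ffi\in X$.

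The second step is to identify $L$ with integration against a function $u\in L^p(\Omega\times Y^\bblm;\RRk)$. Here I would use a duality argument: $X^*$ can be identified with a space of ``generalized'' measures, but the cleaner route is to test against products. First, applying the bound above to $L$ and using that it holds with $\Omega$ replaced by any measurable subset $E\subset\Omega$ — because $\mathbf{1}_E\ffi$ is still admissible — one shows that $L$ is absolutely continuous in a suitable sense and extends by density/continuity to a bounded functional on $L^{p'}(\Omega\times Y^\bblm;\RRk)$. Indeed, the linear map $T:X\to L^{p'}(\Omega\times Y^\bblm;\RRk)$, $\ffi\mapsto\ffi$ (viewing a Carath\'eodory function as an element of $L^{p'}$ of the product) has dense range, and $L$ factors through $T$ because $|L(\ffi)|\le C\|\ffi\|_{L^{p'}(\Omega\times Y^\bblm;\RRk)}$: this last inequality follows by choosing the test function in Corollary~\ref{cor:R2sc} and passing to the limit, or more directly by approximating $\ffi$ by step functions in the product space, using Step~2/Step~3 of the proof of Proposition~\ref{prop:Radmissible} together with a diagonal argument. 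Hence $L$ extends to a bounded linear functional on $L^{p'}(\Omega\times Y^\bblm;\RRk)$, and by the Riesz representation theorem there is a unique $u\in L^p(\Omega\times Y^\bblm;\RRk)$ (using $|Y^\bblm|$ as the normalization so that the pairing is $\int_\Omega\fint_{Y^\bblm}u\cdot\ffi$) with
\begin{equation*}
\begin{aligned}
L(\ffi)=\int_\Omega\fint_{Y^\bblm} u(x,y)\cdot\ffi(x,y)\,\dx\dy\quad\text{for all }\ffi\in X.
\end{aligned}
\end{equation*}
Combining the two steps gives exactly \eqref{eq:lim2sc} along $\epsi'$, i.e. $u_{\epsi'}\Rsc{\bR\text{-}2sc} u$.

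The step I expect to be the main obstacle is the norm estimate $|L(\ffi)|\le C\|\ffi\|_{L^{p'}(\Omega\times Y^\bblm;\RRk)}$ that allows the extension of $L$ to the (much larger) product $L^{p'}$ space: this is where one genuinely needs the ergodic/averaging properties of $\bR$ rather than soft functional analysis, and it is the quasi-crystalline analogue of the classical ``$\|\ffi(\cdot,\cdot/\epsi)\|_{L^{p'}}\to\|\ffi\|_{L^{p'}(\Omega\times Y)}$'' fact. The clean way to obtain it is: for fixed admissible $\ffi$,
\begin{equation*}
\begin{aligned}
|L(\ffi)|=\lim_{\epsi'\to0^+}\Big|\int_\Omega u_{\epsi'}\cdot\ffi\Big(\cdot,\tfrac{\bR\,\cdot}{\epsi'}\Big)\,\dx\Big|\le \limsup_{\epsi'\to0^+}\|u_{\epsi'}\|_{L^p}\Big\|\ffi\Big(\cdot,\tfrac{\bR\,\cdot}{\epsi'}\Big)\Big\|_{L^{p'}},
\end{aligned}
\end{equation*}
and then invoke Proposition~\ref{prop:Radmissible} with $\psi=|\ffi|^{p'}$ to get $\|\ffi(\cdot,\bR(\cdot)/\epsi')\|_{L^{p'}(\Omega;\RRk)}^{p'}\to\int_\Omega\fint_{Y^\bblm}|\ffi(x,y)|^{p'}\,\dx\dy=\|\ffi\|_{L^{p'}(\Omega\times Y^\bblm;\RRk)}^{p'}$ (with the $|Y^\bblm|$ normalization). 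Hence $|L(\ffi)|\le C\|\ffi\|_{L^{p'}(\Omega\times Y^\bblm;\RRk)}$ on the dense subspace $X\subset L^{p'}(\Omega\times Y^\bblm;\RRk)$, and the extension and representation follow. Uniqueness of $u$ is then automatic from Remark~\ref{rmk:uniq}. The remaining bookkeeping — separability of $X$, measurability of Carath\'eodory representatives, the diagonal extraction — is routine.
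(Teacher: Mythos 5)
Your proposal is correct and follows essentially the same route as the paper: the duality argument on the separable space $X=L^{p'}(\Omega;C_\#(Y^\bblm;\RRk))$, Banach--Alaoglu extraction, and the key norm estimate $|L(\ffi)|\le C\|\ffi\|_{L^{p'}(\Omega\times Y^\bblm;\RRk)}$ obtained by passing the H\"older bound to the limit via Proposition~\ref{prop:Radmissible} applied to $|\ffi|^{p'}$, followed by density and Riesz representation (with the $\mathcal{L}^\bblm(Y^\bblm)$ normalization). You correctly identified the only genuinely non-soft step, and it is handled exactly as in the paper.
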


\begin{proof}
The proof follows along the lines of that of \cite[Theorem~14]{LuNgWa02}.

 To simplify
the notation, set \(X:=L^{p'}(\Omega;C_\#(Y^\bblm;\RRk))\), and denote
by \(X'\) the dual of \(X\).
Let \(L_\epsi: X \to\RR\) be the linear map defined, for \(\ffi
\in X\), by
\begin{equation*}
\begin{aligned}
L_\epsi(\ffi):= \int_\Omega u_\epsi(x)\cdot \ffi\Big(x,
\frac{\bR x}{\epsi}
\Big)\,\dx.
\end{aligned}
\end{equation*}

By H\"older's inequality, we have \(|L_\epsi(\ffi)|\leq c\Vert
\ffi \Vert_X\), where \(c:=\sup_{\epsi} \Vert u_\epsi\Vert_{L^p(\Omega;\RRk)}
\). Thus, by the Riesz representation theorem, there exists \(U_\epsi
\in X'\) such that \(\langle U_\epsi,\ffi \rangle_{X',X} = L_\epsi(\ffi)
\) for all \(\ffi
\in X\).
Next, we observe that \(X\) is separable and \(\Vert U_\epsi\Vert_{X'}
= \sup_{\ffi
\in X, \Vert\ffi\Vert_X \leq 1} |\langle U_\epsi,\ffi \rangle_{X',X}|\leq
c\). Hence, by the Alaoglu theorem, there exist a subsequence
\(\epsi'\preceq
\epsi\) and  a function  \(U\in X'\) such that \(\lim_{\epsi'\to
0^+}\langle U_{\epsi'},\ffi \rangle_{X',X} = \langle U,\ffi
\rangle_{X',X}\)  for all \(\ffi\in X\). Passing the inequality
\begin{equation*}
\begin{aligned}
|\langle U_{\epsi'},\ffi
\rangle_{X',X} | = |L_{\epsi'}(\ffi)| \leq c \bigg(\int_\Omega
\Big|
 \ffi\Big(x,
\frac{\bR x}{{\epsi'}}
\Big)\Big|^{p'}\,\dx\bigg)^{\frac{1}{p'}}
\end{aligned}
\end{equation*}
to the limit as \(\epsi'\to0^+\), and invoking Proposition~\ref{prop:Radmissible}
applied to \(\psi(x,y):= |\ffi(x,y)|^{p'}\), we obtain
\begin{equation}\label{eq:Ubdd}
\begin{aligned}
|\langle U,\ffi \rangle_{X',X}| \leq c \bigg(\int_{\Omega}\fint_{Y^\bblm}
|\ffi(x,y)|^{p'}\,\dx\dy \bigg)^{\frac{1}{p'}} =c\big[\mathcal{L}^\bblm(Y^\bblm)\big]^{-\frac{1}{p'}}
\Vert \ffi\Vert_{L^{p'}(\Omega\times
Y^\bblm;\RRk)}
\end{aligned}
\end{equation}
for all \(\ffi
\in X\). Finally, using the density of \(X\) in \(L^{p'}(\Omega\times
Y^\bblm;\RRk)\), \(U\) can be continuously extended to \(L^{p'}(\Omega\times
Y^\bblm;\RRk)\) with \eqref{eq:Ubdd} valid for all \(\ffi \in L^{p'}(\Omega\times
Y^\bblm;\RRk)\). Consequently, by the Riesz representation theorem
there exists \(\bar u\in L^p(\Omega\times Y^\bblm;\RRk)\) such that,
for all  \(\ffi \in L^{p'}(\Omega\times
Y^\bblm;\RRk)\), 
\begin{equation*}
\begin{aligned}
\langle U,\ffi \rangle_{X',X} =\int_{\Omega}\int_{Y^\bblm} \bar u(x,y)\cdot\ffi(x,y)\,\dx\dy
.
\end{aligned}
\end{equation*}
In particular, this last identity holds for all \(\ffi\in X\),
from which we conclude the proof by taking \(u:=\mathcal{L}^\bblm(Y^\bblm)\,\bar
u\).
\end{proof}

\begin{remark}\label{rmk:onRcriterion}
As shown in \cite[Remark~2.8]{BoGuZo10}, Proposition~\ref{prop:compactenessR2sc}
may fail if there exists \(k\in\ZZ^\bblm\backslash \{0\}\) such that
\(\bR^* k =0\).
\end{remark}

\subsection{\textit{R}-two-scale limits of \(\Aa\)-free sequences}\label{subs:2scAfree}
In this subsection, we characterize the \(\bR\)-two-scale limits
associated with \(L^p\)-bounded sequences of \(\Aa\)-free vector
fields, as stated in Theorem~\ref{thm:main2}. As we will show, this characterization is intimately related to the  notion of \((\Aa,\Aa_{\bR^*}^y)\)-free
vector fields introduced below.

\begin{definition}[\(\Aa_{\bR^*}\)- and \(\Aa_{\bR^*}^y\)-free
 fields]\label{def:AR*free}
 We say that \(v\in  L^p_\#(Y^\bblm;\RRd)\) is \(\A_{\bR^*}\)-free,
 and write \(\Aa_{\bR^*} v=0\),
 if for all \(\psi\in C^1_\#(Y^\bblm;\RRl)\), we have
\begin{equation}
\begin{aligned}\label{AR*=0}
\int_{Y^\bblm} v(y)\cdot \Aa^*_{\bR}\psi(y)\,\dy =0,
\end{aligned}
\end{equation}
where 
\begin{equation*}
\begin{aligned}
\Aa^*_{\bR} :=-\sum_{i=1}^\bbln \sum_{m=1}^\bblm
(A^{(i)})^T\bR_{mi}
 \frac{\partial}{\partial y_m}
. 
\end{aligned}
\end{equation*}

We say that \(w\in L^p(\Omega;L^p_\#(Y^\bblm;\RRd))\), with \(w=w(x,y)\), is  \(\A_{\bR^*}^y\)-free,
 and write \(\Aa_{\bR^*}^y w =0\), if 
\(\A_{\bR^*} w(x,\cdot) =0\) for a.e.~\(x\in\Omega\).
 \end{definition}

\begin{remark}[On the notion of \(\Aa_{\bR^*}\)-free]\label{rmk:onAR*}
If \(v\in C^1_\#(Y^\bblm;\RRd)\) satisfies \eqref{AR*=0}, then integration
by parts yields 
\begin{equation*}
\begin{aligned}
0&=\int_{Y^\bblm} v(y)\cdot \Aa^*_{\bR}\psi(y)\,\dy =-\int_{Y^\bblm}
v(y) \cdot\sum_{i=1}^\bbln \sum_{m=1}^\bblm
(A^{(i)})^T\bR_{mi}
 \frac{\partial\psi}{\partial y_m}(y)\,\dy\\&= -\int_{Y^\bblm}\sum_{i=1}^\bbln \sum_{m=1}^\bblm \bR^*_{im}A^{(i)}v(y)\cdot\frac{\partial \psi}{\partial y_m} (y)\,\dy =\int_{Y^\bblm}\sum_{i=1}^\bbln
\sum_{m=1}^\bblm \bR^*_{im}A^{(i)}\frac{\partial v}{\partial
y_m} (y)\cdot \psi(y)\,\dy 
\end{aligned}
\end{equation*}
for all \(\psi\in C^1_\#(Y^\bblm;\RRl)\). Thus, \(\Aa_{\bR^*} v=0\) pointwise in \(\RRm\), where \(\Aa_{\bR^*} :=-\sum_{i=1}^\bbln \sum_{m=1}^\bblm
\bR^*_{im}A^{(i)}
 \frac{\partial }{\partial y_m}\).

We observe further that, as a consequence of our analysis in the Appendix
(see Remark~\ref{rmk:relGRAR*}), in the \(\Aa
= \rm curl\) case, in \(\RRn\), we have that  \(v\in  L^p_\#(Y^\bblm;\RRn)\) is \(\A_{\bR^*}\)-free
if and only if \(v\in \Gg_{\bR}^p\), where \(\Gg_{\bR}^p\) is
given by \eqref{eq:spaceGR}.
\end{remark}

\begin{definition}[\((\Aa,\Aa_{\bR^*}^y)\)-free
 fields]\label{def:AAR*free}
 Let \(w\in L^p(\Omega; L^p_\#(Y^\bblm;\RRd))\), and define \(\bar
w_0\in L^p(\Omega;\RRd)\) and \( \bar w_1 \in L^p(\Omega; L^p_\#(Y^\bblm;\RRd)) \) by setting \(\bar
 w_0:=\int_{Y^\bblm} w(\cdot,y)\,\dy\) and \(\bar w_1:= w - \bar w_0\). We
 say that \(w\) is \((\Aa,\Aa_{\bR^*}^y)\)-free if
\begin{equation*}
\begin{aligned}
\Aa \bar w_0 =0\quad \text{and} \quad \Aa_{\bR^*}^y \bar w_1 =0
\end{aligned}
\end{equation*}
in the sense of Definition~\ref{def:Afree} and Definition~\ref{def:AR*free}, respectively.
\end{definition}

The next proposition shows that the  \(\bR\)-two-scale limit
of an \(L^p\)-bounded sequence of \(\Aa\)-free vector
fields is necessarily \((\Aa, \Aa_{\bR^*}^y)\)-free.

\begin{proposition}\label{prop:necR2sc}
Let \(\{u_\epsi\}_\epsi\) be a bounded and \(\Aa\)-free sequence
in \(L^p(\Omega;\RRd)\). Assume  that there exists a function \(u\in L^p(\Omega\times Y^\bblm;\RRd)\) such
that \(u_{\epsi}\Rsc{\bR\text{-}2sc}
u \).
Then,  \(u\) is \((\Aa,\Aa_{\bR^*}^y)\)-free in the sense of
Definition~\ref{def:AAR*free}.
\end{proposition}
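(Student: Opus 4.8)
The plan is to test the $\bR$-two-scale convergence $u_\epsi \Rsc{\bR\text{-}2sc} u$ against two different families of oscillating test functions, one detecting the macroscopic constraint $\Aa \bar u_0 = 0$ and one detecting the microscopic constraint $\Aa_{\bR^*}^y \bar u_1 = 0$. For the first, I would use the fact (Proposition~\ref{prop:R2sc-wc}) that $u_\epsi \weakly \bar u_0$ weakly in $L^p(\Omega;\RRd)$. Since $\Aa$ is a bounded linear operator from $L^p(\Omega;\RRd)$ into $W^{-1,p}(\Omega;\RRl)$, and each $u_\epsi$ is $\Aa$-free, weak convergence $u_\epsi \weakly \bar u_0$ passes through $\Aa$ (testing against any fixed $\varphi \in C^1_c(\Omega;\RRl)$, $\int_\Omega \bar u_0 \cdot \Aa^* \varphi\,\dx = \lim_\epsi \int_\Omega u_\epsi \cdot \Aa^* \varphi\,\dx = \lim_\epsi \langle \Aa u_\epsi, \varphi\rangle = 0$), so $\Aa \bar u_0 = 0$ in the sense of Definition~\ref{def:Afree}(i). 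This is the easy half.

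For the microscopic constraint, the idea is to construct, for a fixed $\psi \in C^1_c(\Omega; C^\infty_\#(Y^\bblm;\RRl))$, a test function that ``sees'' the $\Aa_{\bR^*}$-structure. Set $\varphi_\epsi(x) := \epsi\, \psi\big(x, \tfrac{\bR x}{\epsi}\big)$, which lies in $C^1_c(\Omega;\RRl)$ and converges to $0$ uniformly. The key computation is the chain rule:
\begin{equation*}
\frac{\partial \varphi_\epsi}{\partial x_i}(x) = \epsi\, \frac{\partial \psi}{\partial x_i}\Big(x, \frac{\bR x}{\epsi}\Big) + \sum_{m=1}^\bblm \bR_{mi}\, \frac{\partial \psi}{\partial y_m}\Big(x, \frac{\bR x}{\epsi}\Big),
\end{equation*}
so that $\Aa^* \varphi_\epsi(x) = -\epsi \sum_i (A^{(i)})^T \tfrac{\partial\psi}{\partial x_i}(x,\tfrac{\bR x}{\epsi}) + \big(\Aa^*_{\bR}\psi\big)\big(x, \tfrac{\bR x}{\epsi}\big)$, where $\Aa^*_{\bR}$ acts in the $y$-variable as in Definition~\ref{def:AR*free}. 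Testing $0 = \langle \Aa u_\epsi, \varphi_\epsi\rangle = \int_\Omega u_\epsi \cdot \Aa^* \varphi_\epsi\,\dx$ and letting $\epsi \to 0^+$: the first term is $O(\epsi)$ by boundedness of $\{u_\epsi\}_\epsi$ in $L^p$ (using Proposition~\ref{prop:Radmissible} or H\"older to bound $\|\tfrac{\partial\psi}{\partial x_i}(\cdot,\tfrac{\bR\,\cdot}{\epsi})\|_{L^{p'}}$), while the second term converges, by the defining property \eqref{eq:lim2sc} of $\bR$-two-scale convergence applied with the admissible test function $(x,y)\mapsto \Aa^*_{\bR}\psi(x,y)$, to $\int_\Omega \fint_{Y^\bblm} u(x,y)\cdot \Aa^*_{\bR}\psi(x,y)\,\dx\dy$. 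Hence this last integral vanishes for all such $\psi$. Since $\Aa^*_{\bR}\psi$ has zero $y$-average when $\psi$ is $Y^\bblm$-periodic (it is a $y$-divergence of a periodic field), replacing $u$ by $\bar u_1 = u - \bar u_0$ leaves the integral unchanged, giving $\int_\Omega \fint_{Y^\bblm} \bar u_1(x,y)\cdot \Aa^*_{\bR}\psi(x,y)\,\dx\dy = 0$. A Fubini/localization argument in $x$ (varying $\psi(x,\cdot)$ over a countable dense family and using the arbitrariness of the $x$-dependence) then yields $\int_{Y^\bblm} \bar u_1(x,y)\cdot \Aa^*_{\bR}\psi_0(y)\,\dy = 0$ for a.e.\ $x\in\Omega$ and all $\psi_0 \in C^1_\#(Y^\bblm;\RRl)$, which is exactly $\Aa^y_{\bR^*}\bar u_1 = 0$ in the sense of Definition~\ref{def:AR*free}.

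The main obstacle I anticipate is the passage from the integrated-in-$x$ identity to the pointwise-in-$x$ statement $\Aa_{\bR^*}\bar u_1(x,\cdot) = 0$ for a.e.\ $x$: one must be careful that the test functions $\psi$ are allowed to have full, essentially arbitrary, $x$-dependence (e.g.\ products $\chi(x)\psi_0(y)$ with $\chi \in C^\infty_c(\Omega)$ and $\psi_0 \in C^1_\#(Y^\bblm;\RRl)$), and then invoke separability of $C^1_\#(Y^\bblm;\RRl)$ together with the fundamental lemma of the calculus of variations to conclude. A secondary technical point is verifying that $\Aa^*_{\bR}\psi$, and the product of a cutoff with it, indeed belongs to the admissible test space $L^{p'}(\Omega; C_\#(Y^\bblm;\RRl))$ for \eqref{eq:lim2sc} — this is immediate since $\psi \in C^1_c(\Omega;C^\infty_\#(Y^\bblm;\RRl))$ — and that the $O(\epsi)$ error genuinely vanishes, which follows from the uniform $L^p$ bound on $\{u_\epsi\}_\epsi$ guaranteed by Proposition~\ref{prop:R2sc-wc}. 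This mirrors the periodic argument of \cite{FoKr10} closely, with the cut-and-project substitution $x/\epsi \rightsquigarrow \bR x/\epsi$ and Proposition~\ref{prop:Radmissible} playing the role of the Riemann–Lebesgue lemma.
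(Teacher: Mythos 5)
Your proposal is correct and follows essentially the same route as the paper's proof: the macroscopic constraint is obtained by testing with $x$-only functions (equivalently, via the weak limit $\bar u_0$), and the microscopic one by testing with $\varphi_\epsi = \epsi\,\psi(x,\bR x/\epsi)$, computing $\Aa^*\varphi_\epsi$ by the chain rule, discarding the $O(\epsi)$ term, passing to the two-scale limit, and then localizing in $x$ via separability of $C^1_\#(Y^\bblm;\RRl)$. The only cosmetic difference is that the paper restricts to product test functions $\phi(x)\psi(y)$, which suffices for the same conclusion.
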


\begin{proof}
Let \(\phi\in C^1_c(\Omega;\RRl)\). Using   the fact
that each \(u_\epsi\) is \(\Aa\)-free first, and  invoking \eqref{eq:lim2sc}
applied to \(\varphi:=\Aa^*\phi\),
we get
\begin{equation}
\begin{aligned}\label{eq:R2scsuf1}
0=\lim_{\epsi\to0^+} \int_\Omega u_\epsi(x)\cdot \Aa^*\phi (x)\,\dx
= \int_\Omega \fint_{Y^\bblm} u(x,y)\cdot  \Aa^*\phi (x)\,\dx \dy =
\int_\Omega \bar u_0(x)\cdot  \Aa^*\phi (x)\,\dx
\end{aligned}
\end{equation}
where \(\bar u_0:= \int_{Y^\bblm} u(\cdot,y)\,\dy\). Recalling
Definition~\ref{def:Afree}, \eqref{eq:R2scsuf1} shows that \(\Aa
\bar u_0 =0\) in \(L^p(\Omega;\RRl)\). 

Next, we prove that \(\Aa_{\bR^*}^y \bar u_1
=0\) with \(\bar u_1:= u - \bar u_0\).  Let \(\phi\in C^1_c(\Omega)\)
and \(\psi\in C^1_\#(Y^\bblm;\RRl)\), and   set  \(\varphi_\epsi(x):=
\epsi\phi(x)\psi\big( \frac{\bR x}{\epsi}\big)  \) for \(x\in\Omega\).
Then \(\varphi_\epsi \in C^1_c(\Omega;\RRl)\) with
\begin{equation*}
\begin{aligned}
\Aa^*\varphi_\epsi (x) &=-\sum_{i=1}^\bbln (A^{(i)})^T\frac{\partial
\varphi_\epsi}{\partial x_i}(x)\\&= -\epsi \sum_{i=1}^\bbln \frac{\partial
\phi}{\partial x_i}(x)(A^{(i)})^T
 \psi\Big(\frac{\bR x}{\epsi}\Big) - \phi(x)\sum_{i=1}^\bbln \bigg[(A^{(i)})^T\sum_{m=1}^\bblm
 \frac{\partial \psi}{\partial y_m}\Big( \frac{\bR x}{\epsi}\Big) \bR_{mi}\bigg]\\
&= -\epsi \sum_{i=1}^\bbln \frac{\partial
\phi}{\partial x_i}(x)(A^{(i)})^T
 \psi\Big(\frac{\bR x}{\epsi}\Big) - \phi(x)\sum_{i=1}^\bbln \sum_{m=1}^\bblm(A^{(i)})^T\bR_{mi}
 \frac{\partial \psi}{\partial y_m}\Big( \frac{\bR x}{\epsi}\Big)
.
\end{aligned}
\end{equation*}
%
Hence, arguing as above, we have
\begin{equation}
\label{eq:R2scsuf2}
\begin{aligned}
0&=\lim_{\epsi\to0^+} \int_\Omega u_\epsi(x) \Aa^* \varphi_\epsi(x)\,\dx
\\&= \lim_{\epsi\to0^+} \int_\Omega u_\epsi(x) \cdot \bigg( -\epsi\sum_{i=1}^\bbln
\frac{\partial
\phi}{\partial x_i}(x)(A^{(i)})^T
 \psi\Big(\frac{\bR x}{\epsi}\Big) - \phi(x)\sum_{i=1}^\bbln \sum_{m=1}^\bblm(A^{(i)})^T\bR_{mi}
 \frac{\partial \psi}{\partial y_m}\Big( \frac{\bR x}{\epsi}\Big)
\bigg)\,\dx \\
& =-\int_{\Omega\times Y^\bblm} u(x,y)\cdot \bigg(\phi(x)\sum_{i=1}^\bbln \sum_{m=1}^\bblm(A^{(i)})^T\bR_{mi}
 \frac{\partial \psi}{\partial y_m}(y)\bigg)\,\dx\dy\\
&= -\int_\Omega \fint_{Y^\bblm} u(x,y) \cdot \phi(x)
\Aa^*_{\bR}\psi(y)\,\dx\dy= -\int_\Omega \fint_{Y^\bblm} \bar u_1(x,y) \cdot \phi(x)
\Aa^*_{\bR}\psi(y)\,\dx\dy,
\end{aligned}
\end{equation}
where in the last equality  we used the fact that \(\bar u_0\)
depends only on \(x\) and  \(\int_{Y^\bblm} 
\Aa^*_{\bR}\psi(y)  \,\dy=0\) by the periodicity of \(\psi\).

Because \eqref{eq:R2scsuf2} holds for all \(\phi\in C^1_c(\Omega)\)
and \(\psi\in C^1_\#(Y^\bblm;\RRl)\) and \(C^1_\#(Y^\bblm;\RRl)\) is
separable, we conclude that \(\Aa_{\bR^*}^y \bar u_1
=0\) in the sense of  Definition~\ref{def:AR*free}. 
\end{proof}

The next proposition shows that Proposition~\ref{prop:necR2sc} fully characterizes the  \(\bR\)-two-scale limit
of an \(L^p\)-bounded sequence of \(\Aa\)-free vector
fields, as we prove that any  \((\Aa, \Aa_{\bR^*}^y)\)-free vector
field is attained as the \(\bR\)-two-scale limit of such a sequence. As we mentioned in the Introduction, this result is new in the literature even for \(p=2\) and \(\Aa=\rm
curl\) or \(\Aa=\rm div\) which were treated in  \cite{BoGuZo10, WeGuCh19}. 

\begin{proposition}\label{prop:sufR2sc}
Let \(u\in L^p(\Omega;L_\#^p(Y^\bblm;\RRd)) \) be a \((\Aa,\Aa_{\bR^*}^y)\)-free
vector field in the sense of
Definition~\ref{def:AAR*free}, and assume that \(\bR\)
satisfies 
\eqref{eq:Rcriterion}.
 Then, there exists a  bounded and
\(\Aa\)-free sequence, \(\{u_\epsi\}_\epsi\), 
in \(L^p(\Omega;\RRd)\) such
that \(u_{\epsi}\Rsc{\bR\text{-}2sc}
u \).
\end{proposition}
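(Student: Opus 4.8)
The plan is to follow the strategy used for the periodic case in \cite{FoKr10}, in four moves: a density reduction to test fields of separated‑variables form; an explicit recovery sequence for such fields that is only \emph{asymptotically} $\Aa$‑free; a correction onto exactly $\Aa$‑free fields via the $\Aa$‑free periodic extension Lemma~\ref{lem:2.8FK}; and a diagonalization.

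First I would set $\bar u_0(x):=\fint_{Y^\bblm}u(x,y)\,\dy$ and $\bar u_1:=u-\bar u_0$, so that $\Aa\bar u_0=0$ in $\Omega$ and, by Definition~\ref{def:AAR*free}, $\bar u_1(x,\cdot)$ belongs for a.e.\ $x\in\Omega$ to the closed subspace
\[
W:=\Big\{w\in L^p_\#(Y^\bblm;\RRd):\ \Aa_{\bR^*}w=0,\ \fint_{Y^\bblm}w(y)\,\dy=0\Big\}
\]
of $L^p_\#(Y^\bblm;\RRd)$; that is, $\bar u_1\in L^p(\Omega;W)$. The key claim here is that the linear span $\mathcal D$ of products $\phi(x)w(y)$, with $\phi\in C^\infty_c(\Omega)$ and $w\in C^\infty_\#(Y^\bblm;\RRd)\cap W$, is dense in $L^p(\Omega;W)$: on the one hand $C^\infty_c(\Omega)\otimes W$ is dense in $L^p(\Omega;W)$ by the standard approximation of $W$‑valued simple functions and mollification of the scalar coefficients; on the other hand, for any $w\in W$ the periodic mollifications $w\ast\rho_\delta\in C^\infty_\#(Y^\bblm;\RRd)$ converge to $w$ in $L^p_\#$ and still satisfy $\Aa_{\bR^*}(w\ast\rho_\delta)=(\Aa_{\bR^*}w)\ast\rho_\delta=0$ and $\fint_{Y^\bblm}w\ast\rho_\delta=0$, hence lie in $C^\infty_\#(Y^\bblm;\RRd)\cap W$. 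Granting this, I would fix a sequence $u^{(k)}(x,y):=\bar u_0(x)+\sum_{j=1}^{J_k}\phi^{(k)}_j(x)\,w^{(k)}_j(y)$ converging to $u$ in $L^p(\Omega\times Y^\bblm;\RRd)$, with $\phi^{(k)}_j\in C^\infty_c(\Omega)$ and $w^{(k)}_j\in C^\infty_\#(Y^\bblm;\RRd)$ such that $\Aa_{\bR^*}w^{(k)}_j=0$ and $\fint_{Y^\bblm}w^{(k)}_j=0$; each $u^{(k)}$ is then $(\Aa,\Aa_{\bR^*}^y)$‑free.

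Next, for fixed $k$ (abbreviating $\phi_j:=\phi^{(k)}_j$, $w_j:=w^{(k)}_j$), I would take $\widetilde u_\epsi(x):=\bar u_0(x)+\sum_j\phi_j(x)\,w_j(\bR x/\epsi)$. By Proposition~\ref{prop:R2scSc}, Corollary~\ref{cor:R2sc} and linearity, $\widetilde u_\epsi\Rsc{\bR\text{-}2sc}u^{(k)}$ and $\{\widetilde u_\epsi\}_\epsi$ is $p$‑equiintegrable in $L^p(\Omega;\RRd)$. Since $\Aa\bar u_0=0$ and, by Remark~\ref{rmk:onAR*}, $\sum_{i,m}\bR_{mi}A^{(i)}\partial_{y_m}w_j=-\Aa_{\bR^*}w_j=0$ pointwise, the chain rule gives $\Aa\widetilde u_\epsi(x)=\sum_{j,i}\partial_{x_i}\phi_j(x)\,A^{(i)}w_j(\bR x/\epsi)$ in $L^p(\Omega;\RRl)$; by Lemma~\ref{lem:RLforR} (using $\fint_{Y^\bblm}w_j=0$) this tends weakly to $0$ in $L^p(\Omega;\RRl)$, and since $\Omega$ is bounded the embedding $L^p(\Omega;\RRl)\hookrightarrow W^{-1,p}(\Omega;\RRl)$ is compact, so $\Aa\widetilde u_\epsi\to0$ strongly in $W^{-1,p}(\Omega;\RRl)$. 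Writing $\widetilde u_\epsi=\bar u_0+r_\epsi$, the remainder satisfies $r_\epsi\weakly0$ in $L^p(\Omega;\RRd)$ (Proposition~\ref{prop:R2sc-wc}), is $p$‑equiintegrable (Corollary~\ref{cor:R2sc}), and $\Aa r_\epsi=\Aa\widetilde u_\epsi\to0$ in $W^{-1,p}(\Omega;\RRl)$. Choosing a parallelotope $\Pi$ with $\overline\Omega$ in its interior and invoking Lemma~\ref{lem:2.8FK} with $O=\Omega$ (this is where \eqref{constrank} is used), I obtain an $\Aa$‑free, $p$‑equiintegrable sequence $\{s_n\}\subset L^p_\#(\Pi;\RRd)$ with $s_n-r_{\epsi_n}\to0$ in $L^p(\Omega;\RRd)$ and $\|s_n\|_{L^p(\Pi;\RRd)}\leq C\|r_{\epsi_n}\|_{L^p(\Omega;\RRd)}$. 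Then $u^{(k)}_n:=\bar u_0+s_n|_\Omega$ is $\Aa$‑free in $\Omega$ (extending compactly supported test functions by zero across $\partial\Pi$), bounded in $L^p(\Omega;\RRd)$, and $u^{(k)}_n-\widetilde u_{\epsi_n}\to0$ in $L^p(\Omega;\RRd)$, so $u^{(k)}_n\Rsc{\bR\text{-}2sc}u^{(k)}$ as $n\to\infty$ by Proposition~\ref{prop:R2scSc} and linearity.

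Finally I would diagonalize: by Proposition~\ref{prop:Radmissible} applied to $|\sum_j\phi^{(k)}_jw^{(k)}_j|^p$ one has $\|r^{(k)}_n\|^p_{L^p(\Omega;\RRd)}\to\int_\Omega\fint_{Y^\bblm}|u^{(k)}-\bar u_0|^p$ as $n\to\infty$, which with $u^{(k)}\to u$ in $L^p$ yields a bound $\|u^{(k)}_n\|_{L^p(\Omega;\RRd)}\leq M$ for $n\geq N_k$, with $M$ independent of $k$; then, testing $\bR$‑two‑scale convergence of these bounded sequences against a countable set $\{\ffi_\ell\}_\ell$ dense in $C^\infty_c(\Omega;C^\infty_\#(Y^\bblm;\RRd))$ (Remark~\ref{rmk:testfct}), a standard diagonal extraction $u_\epsi:=u^{(k)}_{n(k)}$ with $n(k)\to\infty$, $n(k)\geq N_k$, and $\epsi:=\epsi_{n(k)}\to0$ produces an $\Aa$‑free, $L^p$‑bounded sequence with $u_\epsi\Rsc{\bR\text{-}2sc}u$, using also $\int_\Omega\fint_{Y^\bblm}u^{(k)}\cdot\ffi_\ell\to\int_\Omega\fint_{Y^\bblm}u\cdot\ffi_\ell$. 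The hard part, I expect, is the density claim in the first move — specifically, verifying that mollifying in the fast variable preserves both the $\Aa_{\bR^*}$‑freeness and the zero‑average constraint, which is what makes the explicit separated‑variables recovery sequence available; the remaining moves are routine once one has the $\bR$‑two‑scale machinery of Section~\ref{sect:cap2sc}, the $\Aa$‑free periodic extension lemma, and the compact embedding $L^p(\Omega)\hookrightarrow W^{-1,p}(\Omega)$.
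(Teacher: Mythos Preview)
Your proposal is correct and takes essentially the same approach as the paper: a density reduction to smooth $(\Aa,\Aa_{\bR^*}^y)$-free data, an explicit recovery sequence $\widetilde u_\epsi(x)=\bar u_0(x)+(\text{smooth }\bar u_1)(x,\bR x/\epsi)$ that is only asymptotically $\Aa$-free, correction to exactly $\Aa$-free via Lemma~\ref{lem:2.8FK}, and a diagonalization. The only cosmetic difference is that the paper mollifies $\bar u_1$ jointly in $(x,y)$ (obtaining smooth approximants not of tensor-product form) rather than first passing to separated-variables sums $\sum_j\phi_j(x)w_j(y)$, but both routes rest on exactly the observation you flagged as the crux---that periodic mollification in $y$ preserves $\Aa_{\bR^*}$-freeness and the zero-mean constraint.
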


\begin{proof}
Fix  
 \(u\in L^p(\Omega; L^p_\#(Y^\bblm;\RRd))\), a  \((\Aa,\Aa_{\bR^*}^y)\)-free
vector field. We have  
\begin{equation}\label{eq:Au0AR*u1}
\begin{aligned}
\Aa \bar u_0 =0\quad \text{and} \quad \Aa_{\bR^*}^y \bar u_1
=0
\end{aligned}
\end{equation}
in the sense of Definition~\ref{def:Afree} and Definition~\ref{def:AR*free},
respectively, where \(\bar
 u_0:=\int_{Y^\bblm} u(\cdot,y)\,\dy\) and \(\bar u_1:= u - \bar
u_0\). Note that for a.e.~\(x\in\Omega\), it holds
\begin{equation}\label{eq:meanu1}
\begin{aligned}
\int_{Y^\bblm} \bar u_1(x,y)\,\dy =0.
\end{aligned}
\end{equation}

 We will proceed in three steps.  

\textit{Step~1.} Assume that \(\bar u_0=0\) and \(\bar u_1\in C^1_c(\RRn;
C^1_\#(Y^\bblm;\RRd))\). In this case, \eqref{eq:meanu1} holds for
all \(x\in\Omega\) and, as observed in Remark~\ref{rmk:onAR*},
we have
\begin{equation}\label{eq:AR*u1=0}
\begin{aligned}
\Aa_{\bR^*} \bar u_1=0
\text{ pointwise in } \Omega\times\RRm, \text{ where } \Aa_{\bR^*} =-\sum_{i=1}^\bbln \sum_{m=1}^\bblm
\bR^*_{im}A^{(i)}
 \frac{\partial }{\partial y_m}.
\end{aligned}
\end{equation}

For each \(\epsi>0\), define \(v_\epsi\in C^1_c(\RRn)\) by setting
\begin{equation}\label{eq:defvepsi}
\begin{aligned}
v_\epsi(x):= \bar u_1\Big(x,\frac{\bR x}{\epsi}\Big)\enspace \text{
for } x\in\RRn.
\end{aligned}
\end{equation}
By Corollary~\ref{cor:R2sc} and Proposition~\ref{prop:R2sc-wc},
together with \eqref{eq:meanu1}, we obtain
\begin{equation}\label{eq:recR2sc1}
\begin{aligned}
& \{v_\epsi\}_\epsi \text{ is a \(p\)-integrable sequence in
\(L^p(\Omega;\RRd)\)},\\
&  v_{\epsi}\Rsc{\bR\text{-}2sc}
\bar u_1, \\
& v_\epsi \weakly0 \text{ weakly in \(L^p(\Omega;\RRd)\)}.
\end{aligned}
\end{equation}

On the other hand, in view of \eqref{eq:AR*u1=0} and recalling
Remark~\ref{rmk:Aseveralv}, we have
\begin{equation*}
\begin{aligned}
\Aa v_\epsi (x) = (\Aa_x \bar u_1) \Big(x,\frac{\bR x}{\epsi}\Big)
\end{aligned}
\end{equation*}
for all \(x\in\Omega\). Because \(\Aa_x \bar u_1 \in C_c(\RRn;C^1_\#(Y^\bblm;\RRl))\),
we may invoke Proposition~\ref{prop:R2sc-wc}
and \eqref{eq:meanu1} once more to conclude that
\begin{equation*}
\begin{aligned}
\Aa v_\epsi \weakly \int_{Y^\bblm} (\Aa_x \bar u_1) (x,y) \,\dy =
\Aa_x \bigg(\int_{Y^\bblm} \bar u_1(x,y)\,\dy\bigg) =0 \text{ weakly
in \(L^p(\Omega;\RRl)\)}.
\end{aligned}
\end{equation*}
Hence, 
\begin{equation}
\label{eq:eq:recR2sc2}
\begin{aligned}
\Aa v_\epsi \to 0 \text{ in \(W^{-1,p}(\Omega;\RRl)\)}.
\end{aligned}
\end{equation}

Let \(\Pi\subset\RRn\) be a parallelotope containing \(\Omega\).
By Lemma~\ref{lem:2.8FK}, we can find a \(p\)-equiintegrable
sequence in \(\Pi\), \(\{u_\epsi\}_\epsi\subset L^p(\Pi;\RRd)\), and
a positive constant depending only on \(\Aa\),  \(C=C(\Aa)\), such that
\begin{equation}
\label{eq:seqbyextlem}
\begin{aligned}
& \Aa u_\epsi = 0 \mbox{
for all } \epsi>0,\\
&u_\epsi - v_\epsi \to 0 \text{ in } L^p(\Omega;\RRd),\\
&\|u_\epsi\|_{L^p(\Pi;\RRd)} \leqslant C \|v_\epsi\|_{L^p(\Omega;\RRd)}
\mbox{
for all } \epsi>0 .
\end{aligned}
\end{equation}
To conclude Step~1, we observe that the second condition in \eqref{eq:recR2sc1}
and \eqref{eq:seqbyextlem}, together with Proposition~\ref{prop:R2scSc}, yield
\begin{equation}\label{eq:seqbyextlem1}
\begin{aligned}
u_{\epsi}\Rsc{\bR\text{-}2sc}
\bar u_1.
\end{aligned}
\end{equation}

\textit{Step~2.} Assume that \(\bar u_0=0\) and \(\bar u_1\in
L^p(\Omega;L_\#^p(Y^\bblm;\RRd))\).

For all \(y\in\RRm \), we extend \(\bar u_1(\cdot,y)\) by zero outside \(\Omega\), which  we still denote by \(\bar u_1\). Let \(\{\rho_j\}_{j\in\NN}
\subset C_c^\infty(\RRn)\) and \(\{\rho_j^\#\}_{j\in\NN}
\subset C_\#^\infty(Y^\bblm)\) be  sequences of standard, symmetric, mollifiers. For each \(j\in\NN\), we define
\begin{equation*}
\begin{aligned}
\tilde u_j(x,y):=& \int_{\RRn}\!\int_{Y^\bblm} \bar u_1(x',y') \rho_j(x-x')
\rho^\#_j (y-y')\, \dx'\dy'
\\
 = & \int_{\RRn}\!\int_{Y^\bblm} \bar u_1(x',y'+y) \rho_j(x-x')
\rho^\#_j (y')\, \,\dx'\dy,
\end{aligned}
\end{equation*}
where in the last equality we used the \(Y^\bblm\)-periodicity of \(\bar
u_1\) along with the symmetry and \(Y^\bblm\)-periodicity of  \(\rho_j^\#\).
By standard mollification arguments, we have \(\tilde u_j \in C^\infty_c(\RRn;
C^\infty_\#(Y^\bblm;\RRd))\) with 
\begin{equation*}
\begin{aligned}
\Vert \tilde u_j\Vert_{L^p(\RRn;L^p(Y^\bblm:\RRd))} \leq \Vert \bar u_1\Vert_{L^p(\Omega;L^p(Y^\bblm:\RRd))}.
\end{aligned}
\end{equation*}
Moreover,
\begin{equation*}
\begin{aligned}
\Aa_{\bR^*} \tilde u_j=0
\text{ pointwise in } \Omega\times\RRm \enspace \text{ and }
\enspace \int_{Y^\bblm} \tilde u_j (\cdot, y)\,\dy =0
\end{aligned}
\end{equation*}
by \eqref{eq:Au0AR*u1} and  \eqref{eq:meanu1}, together with the \(Y^\bblm\)-periodicity
of \(\bar u_1\) and Fubini's theorem.

By Step~1, for each \(j\in\NN\), we can find
a \(p\)-equiintegrable
sequence in \(\Pi\), \(\{u_\epsi^{(j)}\}\subset L^p(\Pi;\RRd)\), 
satisfying \eqref{eq:seqbyextlem}--\eqref{eq:seqbyextlem1}
with \(u_\epsi\) replaced by \(u_\epsi^{(j)}\) and, recalling
\eqref{eq:defvepsi}, \(v_\epsi\) replaced by
\begin{equation*}
\begin{aligned}
v_\epsi^{(j)}(x):= \tilde u_j\Big(x,\frac{\bR x}{\epsi}\Big)\enspace
\text{
for } x\in\RRn.
\end{aligned}
\end{equation*}
In particular, we have
\begin{equation*}
\begin{aligned}
\limsup_{j\to\infty} \limsup_{\epsi\to0^+} \int_{\Omega} \big|u_\epsi^{(j)}(x)\big|^p\,
\dx& \leq C^p\limsup_{j\to\infty} \limsup_{\epsi\to0^+} \int_{\Omega}
\Big| \tilde u_j\Big(x,\frac{\bR x}{\epsi}\Big) \Big|^p\,\dx\\&=
C^p\limsup_{j\to\infty} \int_{\Omega\times Y^\bblm} |\tilde u_j(x,y)|^p
\, \dx\dy\\
&\leq C^p \Vert
\bar u_1\Vert_{L^p(\Omega;L^p(Y^\bblm:\RRd))}.
\end{aligned}
\end{equation*}
This estimate and the separability of \(L^{p'}(\Omega; C_\#(Y^\bblm;\RRd))\)
allow us to use a diagonalization argument as
in \cite[proof of Proposition 1.11 (p.449)]{FeFo120} to find a sequence
\((j_\epsi)_\epsi\) such that \(j_\epsi\to\infty\) as \(\epsi\to0^+\)
and \(u_\epsi:= u_\epsi^{(j_\epsi)}\) satisfies the required
properties.

\textit{Step~3.} We treat the general case.

By Step~2, there exists a  bounded and
\(\Aa\)-free sequence, \(\{ u_\epsi\}_\epsi\), 
in \(L^p(\Omega;\RRd)\) such
that \(u_{\epsi}\Rsc{\bR\text{-}2sc}
u_1 \). Defining
\(\hat u_\epsi := u_0 + u_\epsi \), we have \(\Aa \hat u_\epsi =0 \) and  \(\hat u_{\epsi}\Rsc{\bR\text{-}2sc}
u_0 + u_1 =u \), using   \eqref{eq:Au0AR*u1} and  Proposition~\ref{prop:R2scSc}.
\end{proof}

\begin{proof}[Proof of Theorem~\ref{thm:main2}] The statement
in Theorem~\ref{thm:main2} in an immediate consequence of Propositions~\ref{prop:necR2sc} and \ref{prop:sufR2sc}.
\end{proof}


\section{$\Gamma$-convergence homogenization}\label{sect:proofmain}
 
 In this section, we prove Theorem~\ref{thm:main}. To this end,
 we first show in Theorem~\ref{t.mt} below that the sequence \(\{F_\epsi\}_\epsi\), with
 \(F_\epsi\) given by  \eqref{eq:defFe}, \(\Gamma\)-converges
 to  a certain functional, \(\mathcal{F}_{\mathrm{hom}}\), with respect to the weak topology in
 \(L^p(\Omega;\RRd)\), as \(\epsi\to0^+\).
Then, in Proposition~\ref{prop:intrep} below, we establish the integral representation
for this \(\Gamma\)-limit as stated in Theorem~\ref{thm:main}.

\begin{theorem} \label{t.mt}
Let \(\Omega\subset\RRn\) be
an open and bounded set,  let 
 $f_R: \Omega \times \RRn \times \RRd
\to [0,\infty)$ be a function satisfying (H1)--(H3), let \(F_\epsi\)
be the functional introduced in \eqref{eq:defFe}, and
 assume
that \eqref{constrank} holds. Then, the sequence \(\{F_\epsi\}_\epsi\)  \(\Gamma\)-converges on \(\su = \big\{u \in L^p(\Omega;\RRd)\!: \, \Aa u = 0\big \}\) as \(\epsi\to0^+\), with respect to
the weak topology in
 \(L^p(\Omega;\RRd)\),    to the functional \(\mathcal{F}_{\mathrm{hom}}\) defined, for \(u\in  \su\), by
\begin{equation*}
\begin{aligned}
\mathcal{F}_{\mathrm{hom}}(u):=\inf_{w\in \sw} \int_\Omega \fint_{Y^\bblm} f(x,y,u(x)
+ w(x,y))\,\dx\dy,
\end{aligned}
\end{equation*}
where
\begin{equation}
\begin{aligned}\label{eq:setWA}
    \sw := \bigg\{w \in L^p(\Omega; L^p_\# (Y^\bblm; \RRd))\! : &\,w \mbox{
is } (\Aa, \Aa^y_{\bR^*})\mbox{-free in the sense of Definition~\ref{def:AAR*free},}\\
&\, \text{with}\int_{Y^\bblm} w(\cdot, y)\,\dy=0\bigg\}.
\end{aligned}
\end{equation}
Precisely, given an arbitrary sequence $\{\e_n\}_{n \in \mathbb{N}}
\subset \R^+$  converging to $0$,  the following pair of statements holds: 
\begin{enumerate}
    \item (\(\Gamma\)-liminf inequality) Let $\{ u_n\}_{n \in \mathbb{N}} \subset \su$ be a sequence such that $u_n \weakly u$ in \(L^p(\Omega;\RRd)\) for some \(u\in L^p(\Omega;\RRd)\). Then, \(u\in\su\) and
    \begin{align*}
        \liminf_{n\to\infty} F_{\e_n}(u_n) \geqslant \mathcal{F}_{\mathrm{hom}}(u).
    \end{align*}
    \item (recovery sequence) For every $u \in \su$,  there
exists sequence $\{u_n\}_{n\in\NN} \subset \su$ such that $u_n \rightharpoonup
u$  in $L^p(\Omega;\RRd)$  and
    \begin{align*}
         \limsup_{n\to\infty} F_{\e_n}(u_n) \leqslant \mathcal{F}_{\mathrm{hom}}(u).
    \end{align*}
\end{enumerate}
\end{theorem}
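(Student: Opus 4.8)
The plan is to prove the two inequalities defining $\Gamma$-convergence separately, relying heavily on Theorem~\ref{thm:main2} (equivalently, Propositions~\ref{prop:necR2sc} and~\ref{prop:sufR2sc}) to translate between $\bR$-two-scale limits of $\Aa$-free sequences and $(\Aa,\Aa^y_{\bR^*})$-free target fields, and on the compactness Proposition~\ref{prop:compactenessR2sc}. The growth hypothesis (H2) will be used repeatedly to pass from $L^p$-bounded sequences to $p$-equiintegrable ones (Dunford--Pettis plus the decomposition lemma of Fonseca--M\"uller / Kristensen), since the recovery sequences produced by Proposition~\ref{prop:sufR2sc} are only $p$-equiintegrable, not uniformly bounded pointwise.

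\textbf{The $\Gamma$-liminf inequality.} Let $\{u_n\}\subset\su$ with $u_n\weakly u$ in $L^p$. Closedness of $\su$ under weak convergence is immediate since $\Aa:L^p\to W^{-1,p}$ is linear and bounded, hence weakly continuous, so $u\in\su$. If $\liminf_n F_{\e_n}(u_n)=+\infty$ there is nothing to prove; otherwise pass to a subsequence (not relabeled) realizing the liminf as a limit and, by (H2), bounded in energy, hence $\{u_n\}$ is bounded in $L^p$. By Proposition~\ref{prop:compactenessR2sc} extract a further subsequence with $u_n\Rsc{\bR\text{-}2sc} v$ for some $v\in L^p(\Omega\times Y^\bblm;\RRd)$; by Proposition~\ref{prop:R2sc-wc} its $y$-average is the weak limit $u$, so $\bar v_0=u$ and $w:=v-u\in\sw$ by Proposition~\ref{prop:necR2sc}. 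Now I split $u_n$ into a $p$-equiintegrable part and a small-measure ``bad'' part via the decomposition lemma: write $u_n=g_n+b_n$ with $\{|g_n|^p\}$ equiintegrable, $g_n\weakly u$, $g_n\Rsc{\bR\text{-}2sc} v$ (the bad part does not affect the two-scale limit since it converges to $0$ in measure), and $\mathcal{L}^\bbln(\{b_n\neq0\})\to0$. Since $f_R\geq0$, $F_{\e_n}(u_n)\geq\int_{\{b_n=0\}}f\big(x,\tfrac{\bR x}{\e_n},g_n(x)\big)\,\dx$. Then I use that, for convex integrands with the equiintegrable sequence $g_n$ and the oscillating test field $\tfrac{\bR x}{\e_n}$, lower semicontinuity under $\bR$-two-scale convergence holds: concretely, apply the convexity inequality (H3) at the point $(x,\tfrac{\bR x}{\e_n})$ with increments around an arbitrary smooth $\psi(x,\tfrac{\bR x}{\e_n})$ approximating $v$, i.e.
\begin{equation*}
f\Big(x,\tfrac{\bR x}{\e_n},g_n(x)\Big)\geq f\Big(x,\tfrac{\bR x}{\e_n},\psi\big(x,\tfrac{\bR x}{\e_n}\big)\Big)+\partial_\xi f\Big(x,\tfrac{\bR x}{\e_n},\psi\big(x,\tfrac{\bR x}{\e_n}\big)\Big)\cdot\big(g_n(x)-\psi\big(x,\tfrac{\bR x}{\e_n}\big)\big),
\end{equation*}
integrate, let $n\to\infty$ using Corollary~\ref{cor:R2sc}/Proposition~\ref{prop:Radmissible} for the $\psi$-terms and $\bR$-two-scale convergence of $g_n$ against the ($p'$-equiintegrable, by (H2)-type bounds on $\partial_\xi f$) gradient term, and finally let $\psi\to v$ in $L^p(\Omega\times Y^\bblm)$. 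This yields $\liminf_n F_{\e_n}(u_n)\geq\int_\Omega\fint_{Y^\bblm}f(x,y,u(x)+w(x,y))\,\dx\dy\geq\mathcal{F}_{\mathrm{hom}}(u)$.

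\textbf{The recovery sequence.} Fix $u\in\su$. For $\delta>0$ pick $w_\delta\in\sw$ with $\int_\Omega\fint_{Y^\bblm}f(x,y,u(x)+w_\delta(x,y))\,\dx\dy\leq\mathcal{F}_{\mathrm{hom}}(u)+\delta$. The field $u+w_\delta$ is $(\Aa,\Aa^y_{\bR^*})$-free with $y$-average $u$, so by Proposition~\ref{prop:sufR2sc} there is a bounded $\Aa$-free sequence $\{u_n^\delta\}\subset\su$ with $u_n^\delta\Rsc{\bR\text{-}2sc} u+w_\delta$; by Proposition~\ref{prop:R2sc-wc}, $u_n^\delta\weakly u$ in $L^p$. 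I claim $\limsup_n F_{\e_n}(u_n^\delta)\leq\int_\Omega\fint_{Y^\bblm}f(x,y,u(x)+w_\delta(x,y))\,\dx\dy$. Inspecting the proof of Proposition~\ref{prop:sufR2sc}, the sequence is built (Steps~1--2) from fields of the form $v_\e^{(j)}(x)=\tilde u_j(x,\tfrac{\bR x}{\e})$ with $\tilde u_j\in C_c^\infty(\RRn;C_\#^\infty(Y^\bblm;\RRd))$ converging to $u+w_\delta$, then corrected by an $L^p$-small, $p$-equiintegrable $\Aa$-free perturbation from Lemma~\ref{lem:2.8FK}, and finally diagonalized; along the way one keeps $\{|u_n^\delta|^p\}$ equiintegrable and $u_n^\delta-\tilde u_{j_\e}(\cdot,\tfrac{\bR\cdot}{\e})\to0$ in $L^p$. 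Using (H2) and equiintegrability to absorb the small perturbation and the tails, together with $p$-equiintegrability and continuity of $f$ to pass the oscillating composition to the limit (Proposition~\ref{prop:Radmissible} applied to $(x,y)\mapsto f(x,y,u(x)+\tilde u_j(x,y))$, itself in $L^1(\Omega;C_\#(Y^\bblm))$ up to the approximation of $u$ by smooth functions), gives $\limsup_n F_{\e_n}(u_n^\delta)\leq\int_\Omega\fint_{Y^\bblm}f(x,y,u(x)+\tilde u_j(x,y))\,\dx\dy$, and letting $j\to\infty$ (using (H2), equiintegrability of $|\tilde u_j|^p$, and continuity of $f$) bounds this by $\int_\Omega\fint_{Y^\bblm}f(x,y,u(x)+w_\delta(x,y))\,\dx\dy\leq\mathcal{F}_{\mathrm{hom}}(u)+\delta$. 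A final diagonalization in $\delta\downarrow0$ with the metrizability of the weak topology on the bounded set $\{v\in L^p:\|v\|_{L^p}\leq M\}$ (all these sequences are uniformly bounded, by Proposition~\ref{prop:R2sc-wc} applied to the two-scale limit, plus the quantitative bound in Lemma~\ref{lem:2.8FK}) produces a single recovery sequence $\{u_n\}\subset\su$ with $u_n\weakly u$ and $\limsup_n F_{\e_n}(u_n)\leq\mathcal{F}_{\mathrm{hom}}(u)$.

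\textbf{Main obstacle.} The delicate point is the $\Gamma$-liminf inequality: one must combine the oscillation in the fast variable (handled by Proposition~\ref{prop:Radmissible}/Corollary~\ref{cor:R2sc}) with weak convergence of the rescaled fields, and the standard trick requires $g_n$ to be $p$-equiintegrable so that the linear (gradient) term in the convexity inequality passes to the limit against a $p'$-equiintegrable oscillating coefficient; controlling $\partial_\xi f$ without coercivity — only (H2) and (H3) — and justifying the $p'$-equiintegrability of $\partial_\xi f(x,\tfrac{\bR x}{\e_n},\psi(x,\tfrac{\bR x}{\e_n}))$ requires care (one uses convexity to get the a priori bound $|\partial_\xi f(x,y,\xi)|\leq C(1+|\xi|^{p-1})$ from (H2), valid since $f(x,y,\cdot)$ is convex and $C^1$ with $p$-growth). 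The other genuinely quasi-crystalline subtlety, already isolated in Remark~\ref{rmk:onH}(i), is that Scorza--Dragoni reductions are unavailable; this is why (H1) imposes \emph{continuity} of $f$ in $(x,y)$, which is exactly what makes Proposition~\ref{prop:Radmissible} applicable and lets the argument go through without measurable-selection-type corrections.
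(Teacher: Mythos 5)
Your proposal is correct and follows essentially the same route as the paper: the $\Gamma$-liminf is obtained exactly as in Proposition~\ref{prop.lbnd} (two-scale compactness, identification of the limit as $u+w$ with $w\in\sw$ via Proposition~\ref{prop:necR2sc}, then the convexity/$C^1$ inequality around smooth approximants $\psi_j$ followed by Fatou), and the recovery sequence reproduces Proposition~\ref{prop:upperb} together with Lemma~\ref{lem:Prop3.5i} and the final diagonalization. The only deviation is your insertion of the equiintegrable/bad-set decomposition of $u_n$ in the liminf step, which is harmless but superfluous: since $\partial_\xi f(x,y,\psi_j(x,y))$ lies in $L^{p'}(\Omega;C_\#(Y^\bblm;\RRd))$ (by (H2)--(H3) and the compact support of $\psi_j$), the linear term passes to the limit directly by the definition of $\bR$-two-scale convergence applied to the full sequence $u_n$, which is how the paper argues.
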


The proof of  Theorem \ref{t.mt} is obtained as a consequence of  Propositions \ref{prop:upperb} and \ref{prop.lbnd} below. We begin with a lemma that will be used in the subsequent proposition,
where we establish the \textit{recovery sequence} property, and is
a simple adaptation of \cite[Proposition~3.5-(i)]{FoKr10}.
\begin{lemma}\label{lem:Prop3.5i}
Assume that  hypotheses (H1)--(H2) hold.  Let \(\{\epsi_n\}_{n\in\NN}\subset \RR^+\) be a sequence
converging to \(0\), and let \(\{u_n\}_{n\in\NN}\), \(\{w_n\}_{n\in\NN}
\subset L^p(\Omega;\RRd)\) be two \(p\)-equiintegrable sequences
such that \(\lim_{n\to\infty} \Vert u_n- w_n\Vert_{L^p(\Omega;\RRd)}
=0\). 
Then,
\begin{equation*}
\begin{aligned}
\lim_{n\to\infty} \int_\Omega \bigg[  f_R\Big( x , \frac{x}{\e_n},
u_n(x) \Big) \,\dx- f_R\Big( x ,
\frac{x}{\e_n},
w_n(x) \Big) \bigg] \,\dx =0.
\end{aligned}
\end{equation*}
\end{lemma}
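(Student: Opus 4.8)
The plan is a truncation argument: split the integrand difference according to whether $u_n$ and $w_n$ are large, discard the ``large'' part using the growth bound (H2) together with $p$-equiintegrability, and on the remaining ``bounded'' part exploit the uniform continuity of $f$ in its last slot together with $\|u_n-w_n\|_{L^p}\to0$.

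As preliminaries, recall that a $p$-equiintegrable sequence on a set of finite measure is bounded in $L^p$, so $\{u_n\}$ and $\{w_n\}$ are bounded in $L^p(\Omega;\RRd)$; hence $|\{|u_n|>M\}\cup\{|w_n|>M\}|\le(\|u_n\|_{L^p}^p+\|w_n\|_{L^p}^p)M^{-p}\to0$ uniformly in $n$ as $M\to\infty$, and since $\{1+|u_n|^p+|w_n|^p\}$ is equiintegrable on $\Omega$ the quantity
\[
\omega(M):=\sup_{n}\int_{\{|u_n|>M\}\cup\{|w_n|>M\}}\big(1+|u_n|^p+|w_n|^p\big)\,\dx
\]
tends to $0$ as $M\to\infty$. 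I would then fix $M>0$, let $\tau_M\colon\RRd\to\RRd$ be the radial ($1$-Lipschitz) truncation at level $M$, and set $u_n^M:=\tau_M\circ u_n$, $w_n^M:=\tau_M\circ w_n$; then $|u_n^M|,|w_n^M|\le M$ and, since $|u_n^M-w_n^M|\le|u_n-w_n|$ pointwise, $\|u_n^M-w_n^M\|_{L^p(\Omega;\RRd)}\to0$.

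Next I would decompose $f_R(x,x/\e_n,u_n(x))-f_R(x,x/\e_n,w_n(x))$ as the sum of $f_R(x,x/\e_n,u_n)-f_R(x,x/\e_n,u_n^M)$, $f_R(x,x/\e_n,u_n^M)-f_R(x,x/\e_n,w_n^M)$, and $f_R(x,x/\e_n,w_n^M)-f_R(x,x/\e_n,w_n)$. The first summand vanishes where $|u_n|\le M$, and on its complement (H2) together with $|u_n^M|\le|u_n|$ bounds its absolute value by $2C(1+|u_n|^p)$; thus its integral is at most $2C\,\omega(M)$ uniformly in $n$, and the third summand is estimated identically. For the middle summand, fix $\eta>0$ and a compact $K\subset\Omega$ with $|\Omega\setminus K|<\eta$. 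On $\Omega\setminus K$, (H2) bounds the integrand by $2C(1+M^p)$, contributing at most $2C(1+M^p)\eta$. On $K$, by (H1) and the periodicity of $f$ in its second argument, $f$ is uniformly continuous on the compact set $K\times\overline{Y^\bblm}\times\overline{B_M}$; letting $\varpi_{M,K}$ be a corresponding modulus of continuity in the third variable (uniform in the first two), and writing $f_R(x,x/\e_n,\cdot)=f(x,\bR x/\e_n,\cdot)$, the integral over $K$ is at most $\int_K\varpi_{M,K}(|u_n^M-w_n^M|)\,\dx$. Since $\|u_n^M-w_n^M\|_{L^p}\to0$ this sequence converges to $0$ in measure on $K$, while $\varpi_{M,K}(|u_n^M-w_n^M|)\le\varpi_{M,K}(2M)$ is uniformly bounded; the bounded convergence theorem therefore yields $\int_K\varpi_{M,K}(|u_n^M-w_n^M|)\,\dx\to0$. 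Letting $n\to\infty$ and then $\eta\to0$ shows the middle summand tends to $0$ for each fixed $M$, and finally letting $M\to\infty$ and using $\omega(M)\to0$ concludes the proof.

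The only delicate point is the middle summand: since $f$ is assumed continuous on $\Omega\times\RRm\times\RRd$ but not up to $\partial\Omega$, its modulus of continuity in $\xi$ may degenerate near the boundary, so one cannot invoke uniform continuity on $\overline{\Omega}\times\overline{Y^\bblm}\times\overline{B_M}$ directly; the remedy is precisely the localization to a compact $K\subset\Omega$, with the collar $\Omega\setminus K$ absorbed by the growth bound (H2). Everything else is routine truncation and equiintegrability bookkeeping, and the convexity hypothesis (H3) plays no role here.
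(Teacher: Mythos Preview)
Your proof is correct and follows essentially the same strategy as the paper: both arguments combine $p$-equiintegrability to discard the region where $u_n$ or $w_n$ is large, localization to a compact $K\Subset\Omega$ to secure uniform continuity of $f$ via (H1) and periodicity, and the $L^p$ convergence $u_n-w_n\to0$ to control the remaining piece. The only cosmetic difference is that the paper splits the domain directly into a ``bad'' set $A=(\Omega\setminus\Omega_\delta)\cup\{|u_n|\ge r_\delta\}\cup\{|w_n|\ge r_\delta\}\cup\{|u_n-w_n|\ge\bar\delta\}$ and its complement, whereas you introduce the radial truncations $u_n^M$, $w_n^M$ and pass the middle term through the bounded convergence theorem; your observation that (H3) is not needed here is also correct.
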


\begin{proof}
Fix \(\tau>0\). We want to show that there  exists \(n_0= n_0(\tau)\in\NN\)
such that if  \(n\geq n_0\), then 
\begin{equation*}
\begin{aligned}
\bigg|\int_\Omega \Big[  f_R\Big( x , \frac{x}{\e_n},
u_n(x) \Big) \,\dx- f_R\Big( x ,
\frac{x}{\e_n},
w_n(x) \Big) \Big] \,\dx \bigg| \leq \tau.
\end{aligned}
\end{equation*}  

Using the \(p\)-equiintegrability of \(\{u_n\}\) and \(\{w_n\}\), there exists \(\delta=\delta(\tau)>0\) such that if \(E\subset\Omega\)
is a measurable set with \(|E|<\delta\), then
\begin{equation}
\label{eq:peqiiunwn}
\begin{aligned}
\sup_{n\in\NN} \int_E C \big (2+|u_n(x)|^p + |w_n(x)|^p\big)\, \dx <\frac{\tau}{8}.
\end{aligned}
\end{equation}
Moreover, there exists \(r_\delta>0\) such that
\begin{equation}
\begin{aligned}
\label{eq:peqiiunwn1}
\sup_{n\in\NN}\Big[\big|\{|u_n|\geq r_\delta\}\big| + \big|\{|w_n|\geq r_\delta\}\big| \Big] \leq \delta.
\end{aligned}
\end{equation}

Let \(\Omega_{\delta} \Subset \Omega\) be such that \(|\Omega\setminus
\Omega_{\delta}| \leq \delta\). Using the continuity assumption
on \(f\) and the \(Y^\bblm\)-periodicity of \(f\) with respect to its second variable,  we conclude that \(f\) is uniformly continuous
on \(\overline\Omega_\delta\times \RRm \times \overline B_{r_\delta}(0) \). Thus, we can find \(0<\bar\delta \leq \delta\)
such that,  for all \(x\in\Omega_\delta\), \(y\in \RRm\), and \(\xi_1\), \(\xi_2\in B_{r_\delta}(0)\) with \(|\xi_1
- \xi_2| \leq \bar \delta\), we have
\begin{equation}
\label{eq:peqiiunwn2}
\begin{aligned}
\big| f(x,y,\xi_1) - f(x,y,\xi_2) \big| \leq \frac{\tau}{2|\Omega_{\delta}|}.
\end{aligned}
\end{equation}

Finally, by Chebyshev's inequality, there exists \(0<\tilde \delta <\bar \delta \)
such that if \(\Vert v\Vert_{L^p(\Omega;\RRd)}<\tilde
\delta\), then 
\begin{equation}
\label{eq:peqiiunwn3}
\begin{aligned}
\big| \{|v|\geq \bar \delta \} \big| \leq \delta.
\end{aligned}
\end{equation}
We observe further that because \(\lim_{n\to\infty} \Vert u_n- w_n\Vert_{L^p(\Omega;\RRd)}
=0\), we can find \(n_0 = n_0(\tau)\in\NN\) such that  \(\Vert u_n- w_n\Vert_{L^p(\Omega;\RRd)}
<\tilde \delta\) for all \(n\geq n_0\). 

Thus, for each \(n\geq n_0\) and for \(A:=
(\Omega\setminus \Omega_\delta) \cup \{|u_n|\geq r_\delta\} \cup \{|w_n|\geq r_\delta\} \cup  \{|u_n - w_n|\geq \bar \delta \} \), we conclude from (H2), (H3), and  \eqref{eq:peqiiunwn}--\eqref{eq:peqiiunwn3} that
\begin{align*}
&\bigg|\int_\Omega \Big[  f_R\Big( x , \frac{x}{\e_n},
u_n(x) \Big) \,\dx- f_R\Big( x ,
\frac{x}{\e_n},
w_n(x) \Big) \Big] \,\dx \bigg| \\ &\quad\leq \int_A C\big(2 + |u_n(x)|^p
+\ |w_n(x)|^p\big) \,\dx + \bigg|\int_{\Omega\setminus A}  \Big[  f\Big( x , \frac{\bR x}{\e_n},
u_n(x) \Big) \,\dx- f\Big( x ,
\frac{\bR x}{\e_n},
w_n(x) \Big) \Big] \,\dx \bigg|\\
&\quad \leq \frac{\tau}{2} + \frac{\tau}{2|\Omega_{\delta}|}|\Omega\setminus
A| \leq \tau.\qedhere
\end{align*}
\end{proof}

The \textit{recovery sequence} property in Theorem~\ref{t.mt} is a simple consequence of the following proposition. We observe that this result  does not require  assumption (H3) to hold. 

\begin{proposition}\label{prop:upperb}
Assume that hypotheses (H1)--(H2) hold,  and let  \(\mathcal{U}_\mathcal{A}\)
be the set introduced in \eqref{eq:setUA}. 
Then, for each   $\delta>0$,  $u \in
\mathcal{U}_\mathcal{A},$ and \(w\in \overline{\mathcal{W}}_\Aa := \big\{w \in L^p(\Omega; L^p_\# (Y^\bblm;
\RRd))\! : \,w \mbox{
is } (\Aa, \Aa^y_{\bR^*})\)-free in the sense of Definition~\ref{def:AAR*free}\(\big\}\), there
exists a sequence $\{u_\e\} \subset \mathcal{U}_\mathcal{A}$ such that
$u_\e \rightharpoonup u + \bar w_0$ weakly in $L^p(\Omega;\RRd)$ as $\e \to 0^+$
 and, for all \(\kappa\in\NN\), 
\begin{align} \label{upbndeta}
    \lim_{\e \to 0^+} \int_\Omega f_R\Big( x , \frac{x}{\e},
u_\e(x) \Big) \,\dx \leqslant \int_\Omega \fint_{Y^\bblm} f(x, \kappa y, u(x) +
w(x,y))\,\dy \dx + \delta, \end{align}
where, recalling Definition~\ref{def:AAR*free}, \(\bar w_0:=\fint_{Y^\bblm} w(\cdot, y)\,\dy\). 
\end{proposition}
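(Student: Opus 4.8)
The plan is to reduce the general case to the concrete situation handled in Proposition~\ref{prop:sufR2sc} (the sufficiency part of the \(\bR\)-two-scale characterization), then exploit the upper semicontinuity-type estimate provided by Lemma~\ref{lem:Prop3.5i} to pass the energy to the limit. Fix \(\delta>0\), \(u\in\su\), and \(w\in\overline{\sw}\). First I would note that the vector field \(v:=u+w\) (viewed as an element of \(L^p(\Omega;L^p_\#(Y^\bblm;\RRd))\)) is \((\Aa,\Aa^y_{\bR^*})\)-free: indeed its zero-mode is \(\bar v_0=u+\bar w_0\), which is \(\Aa\)-free because \(\Aa u=0\) and \(\Aa\bar w_0=0\) by definition of \(\overline{\sw}\); and its oscillatory part is \(\bar v_1=\bar w_1\), which is \(\Aa^y_{\bR^*}\)-free again by definition of \(\overline{\sw}\). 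Hence Proposition~\ref{prop:sufR2sc} supplies a bounded, \(\Aa\)-free sequence \(\{u_\e\}_\e\subset L^p(\Omega;\RRd)\) with \(u_\e\Rsc{\bR\text{-}2sc}u+w\); by Proposition~\ref{prop:R2sc-wc} this gives \(u_\e\rightharpoonup\bar v_0=u+\bar w_0\) weakly in \(L^p(\Omega;\RRd)\), which is the required weak convergence, and \(\Aa u_\e=0\) means \(u_\e\in\su\).

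Next I would address the energy bound \eqref{upbndeta}. The subtlety is the factor \(\kappa\) multiplying \(y\): we must realize not \(f(x,y,u+w)\) but \(f(x,\kappa y,u+w)\) in the limit. Here I would exploit the flexibility in Proposition~\ref{prop:sufR2sc}: since \(\bR\) satisfies \eqref{eq:Rcriterion}, so does \(\kappa\bR\) (because \((\kappa\bR)^*k=\kappa\bR^*k\neq0\) for \(k\in\ZZ^\bblm\setminus\{0\}\)), and the map \(y\mapsto w(x,\kappa y)\) is still \(Y^\bblm\)-periodic and \(\Aa^y_{\bR^*}\)-free (the operator \(\Aa_{\bR^*}\) rescales covariantly under \(y\mapsto\kappa y\)). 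So I would apply Proposition~\ref{prop:sufR2sc} with \(\bR\) replaced by \(\kappa\bR\) to the field \(u+w(\cdot,\kappa\,\cdot)\), obtaining \(\{u_\e\}_\e\subset\su\) bounded with \(u_\e\;(\kappa\bR)\text{-}2sc\)-converging to \(u+w(\cdot,\kappa\,\cdot)\); equivalently, after the change of variables \(y\mapsto y/\kappa\), one checks that testing against \(\ffi(x,\bR x/\e)\) recovers exactly \(\int_\Omega\fint_{Y^\bblm}f(x,\kappa y,u(x)+w(x,y))\,\dy\dx\) in the relevant limit.

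Alternatively — and this is perhaps cleaner — I would follow the strategy of Proposition~\ref{prop:sufR2sc} directly. Approximate \(v=u+w\) by a smooth \((\Aa,\Aa^y_{\bR^*})\)-free field via the mollification argument of Step~2 there, set \(v_\e^{(j)}(x):=\tilde v_j(x,\kappa\bR x/\e)=u(x)+\tilde u_j(x,\kappa\bR x/\e)\), use the \(\Aa\)-free periodic extension Lemma~\ref{lem:2.8FK} to produce a genuinely \(\Aa\)-free, \(p\)-equiintegrable sequence \(\{u_\e^{(j)}\}\) with \(\|u_\e^{(j)}-v_\e^{(j)}\|_{L^p}\to0\) and uniformly bounded \(L^p\) norms, then apply Corollary~\ref{cor:R2sc} (with the matrix \(\kappa\bR\), which still satisfies \eqref{eq:Rcriterion}) to deduce
\begin{equation*}
\lim_{\e\to0^+}\int_\Omega f_R\Big(x,\tfrac{x}{\e},v_\e^{(j)}(x)\Big)\,\dx=\int_\Omega\fint_{Y^\bblm}f\big(x,\kappa y,u(x)+\tilde u_j(x,y)\big)\,\dy\dx,
\end{equation*}
since \(f\) is continuous, \(Y^\bblm\)-periodic in its second argument, and \((x,y)\mapsto f(x,\kappa y,u(x)+\tilde u_j(x,y))\) is an admissible (Carath\'eodory, \(p\)-growth) test function so that Proposition~\ref{prop:Radmissible} applies. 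By Lemma~\ref{lem:Prop3.5i}, replacing \(v_\e^{(j)}\) by \(u_\e^{(j)}\) does not change this limit. Finally, choosing \(j=j(\delta)\) large enough so that \(\int_\Omega\fint_{Y^\bblm}f(x,\kappa y,u+\tilde u_j)\,\dy\dx\le\int_\Omega\fint_{Y^\bblm}f(x,\kappa y,u+w)\,\dy\dx+\delta\) — which is possible because \(\tilde u_j\to w\) in \(L^p\), hence (after passing to a subsequence) a.e., and \(f\) is continuous with \(p\)-growth so dominated convergence applies — and then diagonalizing in \(\e\), I obtain \(u_\e:=u_\e^{(j(\delta))}\) satisfying \eqref{upbndeta}.

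I expect the main obstacle to be the bookkeeping around the dilation \(y\mapsto\kappa y\): one must verify carefully that \(\kappa\bR\) inherits the Diophantine condition \eqref{eq:Rcriterion}, that \(\Aa_{\bR^*}\)-freeness is preserved under this rescaling (it is, since the symbol scales homogeneously), and that the ergodic-mean identity \(\mathscr{M}(g\circ\bR)=\fint_{Y^\bblm}g\) is unaffected by replacing \(\bR\) with \(\kappa\bR\) — all of which ultimately rest on the computations already recorded in Section~\ref{subs:Rmatrix} and Definition~\ref{def:AR*free}. The remaining ingredients — the \(\Aa\)-free extension lemma, the diagonalization, and the \(L^p\)-continuity of the energy along \(p\)-equiintegrable sequences — are all imported verbatim from the already-established results above, so no genuinely new difficulty arises beyond this rescaling.
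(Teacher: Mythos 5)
Your proposal follows the same route as the paper's proof: decompose \(w=\bar w_0+\bar w_1\), mollify \(\bar w_1\) as in Step~2 of Proposition~\ref{prop:sufR2sc} to get a smooth admissible corrector \(\tilde u_j\), form the oscillating ansatz \(u+\bar w_0+\tilde u_j(x,\bR x/\e)\), compute its energy limit with Proposition~\ref{prop:Radmissible}, project onto \(\su\) with Lemma~\ref{lem:2.8FK}, transfer the energy limit with Lemma~\ref{lem:Prop3.5i}, and diagonalize in \(j\) and \(\e\). For \(\kappa=1\) — the only value used in the proof of Theorem~\ref{t.mt} — this is exactly the paper's argument, and your identification of the weak limit \(u+\bar w_0\) via Propositions~\ref{prop:sufR2sc} and \ref{prop:R2sc-wc} is also fine.

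The one substantive problem is your device for general \(\kappa\), namely replacing the corrector's argument by \(\kappa\bR x/\e\). Writing \(\Psi(x,y):=f\big(x,y,u(x)+\tilde u_j(x,\kappa y)\big)\), one has \(f_R\big(x,\tfrac{x}{\e},u(x)+\tilde u_j(x,\kappa\bR x/\e)\big)=\Psi\big(x,\tfrac{\bR x}{\e}\big)\), so Proposition~\ref{prop:Radmissible} yields the limit \(\int_\Omega\fint_{Y^\bblm}f\big(x,y,u(x)+\tilde u_j(x,\kappa y)\big)\,\dy\dx\), which after the substitution \(z=\kappa y\) is \(\int_\Omega\fint_{\kappa Y^\bblm}f\big(x,z/\kappa,u(x)+\tilde u_j(x,z)\big)\,\dd z\,\dx\) — the dilation lands on \(f\)'s fast variable in the wrong direction, and this does \emph{not} equal \(\int_\Omega\fint_{Y^\bblm}f\big(x,\kappa y,u(x)+\tilde u_j(x,y)\big)\,\dy\dx\) in general (test with \(f(x,y,\xi)=a(y)\xi\): the two quantities pair different Fourier modes of \(a\) and of the corrector). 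The same objection applies to your first variant via \(\kappa\bR\)-two-scale convergence, since the energy on the left of \eqref{upbndeta} is still evaluated at \(\bR x/\e\), not \(\kappa\bR x/\e\). To be fair, the paper's own Step~1 handles \(\kappa\) no more convincingly — it inserts \(\kappa\) only into the auxiliary function \(\psi\) while keeping the ansatz oscillating at \(\bR x/\e\), so the displayed ``i.e.'' there is likewise only literally correct for \(\kappa=1\) — and nothing downstream uses \(\kappa>1\). So your argument establishes the proposition in the form actually needed, but the \(\kappa\ge2\) claim as you justify it does not go through.
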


\begin{proof}
  Fix   $\delta>0$,  $u \in
\mathcal{U}_\mathcal{A},$  and  $ w \in \overline{\mathcal{W}}_\Aa$.
We will proceed in two steps, first assuming extra regularity
on \(w\),
and then treating the general case.

\textit{Step~1.} Recalling the decomposition \(w= \bar w_0 +
\bar w_1 \) introduced in Definition~\ref{def:AAR*free}, assume
that \(\bar w_1 \in C^1(\overline\Omega; C_\#^1(Y^\bblm;\RRd))\). 

For \(\kappa\in\NN\) and \((x,y)\in \Omega \times Y^\bblm\), define
\begin{equation*}
\begin{aligned}
\psi(x,y):= f(x, \kappa y, u(x) + w(x,y))=f(x, \kappa y, u(x) + \bar w_0(x)
+ \bar w_1(x,y)).
\end{aligned}
\end{equation*}
Using (H1), (H2), the continuity of \(f\), and the regularity of \(\bar w_1\), we conclude that
\(\psi \in L^1(\Omega; C_\#(Y^\bblm))\). Then, by Proposition~\ref{prop:Radmissible},
we have
\begin{equation*}
\begin{aligned}
\lim_{\epsi\to0^+} \int_\Omega \psi\Big(x, \frac{\bR x}{\epsi}\Big)\,\dx
=
\int_{\Omega} \fint_{Y^\bblm} \psi(x,y)\,\dx\dy;
\end{aligned}
\end{equation*}
i.e.,
\begin{equation}\label{eq:onub1}
\begin{aligned}
\lim_{\epsi\to0^+} \int_\Omega f_R\Big( x , \frac{x}{\e},
w_\e(x) \Big) \,\dx =  \int_\Omega \fint_{Y^\bblm} f(x, \kappa y,
u(x) +
w(x,y))\,\dy \dx,
\end{aligned}
\end{equation}
where, for \(x\in\Omega\),
\begin{equation*}
\begin{aligned}
w_\epsi(x):= u(x) + \bar w_0(x) + \bar w_1\Big(x, \frac{\bR x}{\epsi}\Big).
\end{aligned}
\end{equation*}

Arguing as in Step~1 of the proof of Proposition~\ref{prop:sufR2sc}
with \(\bar u_1\) replaced by \(\bar w_1\) in \eqref{eq:defvepsi},
and using the fact that \(\Aa u+\Aa \bar w_0=0\) by the definition
of  \(\mathcal{U}_\mathcal{A}\)
and \(\overline{\mathcal{W}}_\Aa\), we conclude that (see \eqref{eq:recR2sc1}--\eqref{eq:eq:recR2sc2})
\begin{equation*}
\begin{aligned}
& \{w_\epsi\}_\epsi \text{ is a \(p\)-integrable sequence in
\(L^p(\Omega;\RRd)\)},\\
& w_\epsi \weakly u + \bar w_0 \text{ weakly in \(L^p(\Omega;\RRd)\)},\\
&\Aa w_\epsi \to 0 \text{ in \(W^{-1,p}(\Omega;\RRl)\)}.
\end{aligned}
\end{equation*}
Then, by Lemma~\ref{lem:2.8FK}, we can find a sequence \(\{u_\epsi\}_\epsi
\subset L^p(\Omega;\RRd)\) such that
\begin{equation*}
\begin{aligned}
& \{u_\epsi\}_\epsi \text{ is  \(p\)-integrable},\\
& \Aa u_\epsi =0 \text{ in $L^p(\Omega;\RRl)$},\\
& u_\epsi - w_\epsi \to 0 \text{ in \(L^p(\Omega;\RRd)\)}.
\end{aligned}
\end{equation*}
In particular, 
$u_\e \rightharpoonup u + \bar w_0$ weakly in $L^p(\Omega;\RRd)$. Moreover, by Lemma~\ref{lem:Prop3.5i}, we have
\begin{equation*}
\begin{aligned}
\lim_{\epsi\to0^+} \int_\Omega f_R\Big( x , \frac{x}{\e},
u_\e(x) \Big) \,\dx = \lim_{\epsi\to0^+} \int_\Omega f_R\Big( x , \frac{x}{\e},
w_\e(x) \Big) \,\dx,
\end{aligned}
\end{equation*}
which, together with \eqref{eq:onub1}, concludes Step~1.

\textit{Step~2.} We treat the general case.

Fix \(j\in\NN\). Arguing as in Step~1 of the proof of Proposition~\ref{prop:sufR2sc}
with \(\bar u_1\) replaced by \(\bar w_1\), we can find \(\tilde
w_j \in C^1(\overline\Omega; C_\#^1(Y^\bblm;\RRd))\) such
that \(\bar w_0 + \tilde w_j \in\overline{\mathcal{W}}_\Aa\)
and \(\Vert \tilde w_j - \bar w_1\Vert_{L^p(\Omega; L^p_\#(Y^\bblm;\RRd))}
\leq \frac1j\). Then, extracting a subsequence of \(\{\tilde
w_j\}_{j\in\NN}\) if necessary, Vitali--Lebesgue  theorem and (H1)--(H2) yield
\begin{equation*}
\begin{aligned}
&\lim_{j\to\infty}  \int_\Omega \fint_{Y^\bblm} f(x, \kappa y,
u(x) + 
 \bar w_0(x)
+ \tilde w_j(x,y))\,\dy \dx \\&\quad= \int_\Omega \fint_{Y^\bblm} f(x, \kappa y,
u(x) + 
 \bar w_0(x)
+ \bar w_1(x,y))\,\dy \dx.
\end{aligned}
\end{equation*}
Hence, we can find \(j_\delta\in\NN\) such that 
\begin{equation*}
\begin{aligned}
&\int_\Omega \fint_{Y^\bblm} f(x, \kappa y,
u(x) + 
 \bar w_0(x)
+ \tilde w_{j_\delta}(x,y))\,\dy \dx\\&\quad \leq \int_\Omega \fint_{Y^\bblm} f(x, \kappa y,
u(x) + 
 \bar w_0(x)
+ \bar w_1(x,y))\,\dy \dx + \delta.
\end{aligned}
\end{equation*}

To conclude, we invoke  Step~1 to find
 a sequence $\{u_\e\} \subset \mathcal{U}_\mathcal{A}$,
that depends on \(\delta\), \(u\), and \(w\),  
such that
$u_\e \rightharpoonup u + \bar w_0$ weakly in $L^p(\Omega;\RRd)$
as $\e \to 0^+$
 and, for all \(\kappa\in\NN\), 
\begin{equation*} 
    \lim_{\e \to 0^+} \int_\Omega f_R\Big( x , \frac{x}{\e},
u_\e(x) \Big) \,\dx = \int_\Omega \fint_{Y^\bblm} f(x, \kappa y,
u(x) + 
 \bar w_0(x)
+ \tilde w_{j_\delta}(x,y))\,\dy \dx.\qedhere
  \end{equation*}
\end{proof}

Next, we establish the \(\Gamma\)-liminf inequality property stated in Theorem~\ref{t.mt}.

\begin{proposition} \label{prop.lbnd}
Let \(\{\epsi_n\}_{n\in\NN}\subset
\RR^+\) be a sequence
converging to \(0\), and let $\{ u_n\}_{n \in \mathbb{N}}
\subset \su$ be a sequence such that $u_n \weakly u$ in \(L^p(\Omega;\RRd)\)
for some \(u\in L^p(\Omega;\RRd)\). Then, under the assumptions of Theorem~\ref{t.mt}, we have  \(u\in\su\) and
\begin{equation}
\label{eq:lowerb}
\begin{aligned}
         \liminf_{n\to\infty} F_{\e_n}(u_n) \geqslant \mathcal{F}_{\mathrm{hom}}(u).
\end{aligned}
\end{equation}
\end{proposition}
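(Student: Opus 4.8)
The strategy is to pass to a $\bR$-two-scale limit $U$ of a suitable subsequence of $\{u_n\}$, to recognize $U$ as a competitor $u+w$ with $w\in\sw$ via Propositions~\ref{prop:R2sc-wc} and~\ref{prop:necR2sc}, and then to bound $\liminf_n F_{\e_n}(u_n)$ from below against the energy $\int_\Omega\fint_{Y^\bblm}f(x,y,U(x,y))\,\dx\dy$ by means of the subgradient inequality supplied by (H3), together with Proposition~\ref{prop:Radmissible} and the definition of $\bR$-two-scale convergence. The inclusion $u\in\su$ is immediate: for each $\phi\in C^1_c(\Omega;\RRl)$ one has $\Aa^*\phi\in L^{p'}(\Omega;\RRd)$, so passing to the limit in $0=\int_\Omega u_n\cdot\Aa^*\phi\,\dx$ and using $u_n\weakly u$ gives $\int_\Omega u\cdot\Aa^*\phi\,\dx=0$, whence $\Aa u=0$ by Definition~\ref{def:Afree}. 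For \eqref{eq:lowerb} we may assume $L:=\liminf_n F_{\e_n}(u_n)<\infty$ and pick a subsequence along which $F_{\e_n}(u_n)\to L$.

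Since $\{u_n\}$ is bounded in $L^p(\Omega;\RRd)$ (being weakly convergent), Proposition~\ref{prop:compactenessR2sc} provides a further subsequence (not relabelled), still satisfying $F_{\e_n}(u_n)\to L$ and $u_n\weakly u$, and a function $U\in L^p(\Omega\times Y^\bblm;\RRd)$ with $u_n\Rsc{\bR\text{-}2sc}U$. By Proposition~\ref{prop:R2sc-wc}, $\bar U_0:=\fint_{Y^\bblm}U(\cdot,y)\,\dy=u$, and since each $u_n$ is $\Aa$-free, Proposition~\ref{prop:necR2sc} shows that $U$ is $(\Aa,\Aa_{\bR^*}^y)$-free; thus $w:=U-u=\bar U_1$ satisfies $\Aa_{\bR^*}^y w=0$ and $\int_{Y^\bblm}w(\cdot,y)\,\dy=0$, i.e.\ $w\in\sw$, so that $\int_\Omega\fint_{Y^\bblm}f(x,y,U(x,y))\,\dx\dy\ge\mathcal{F}_{\mathrm{hom}}(u)$. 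It therefore suffices to prove
\[
L\ \ge\ \int_\Omega\fint_{Y^\bblm}f(x,y,U(x,y))\,\dx\dy.
\]

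Fix $V\in C_c(\Omega;C_\#(Y^\bblm;\RRd))$. By (H3) the map $\xi\mapsto f(x,y,\xi)$ is convex and $C^1$, and together with the joint continuity of $f$ from (H1) this makes $\partial_\xi f$ jointly continuous in $(x,y,\xi)$. The subgradient inequality gives, a.e.\ in $\Omega$,
\[
f\Big(x,\tfrac{\bR x}{\e_n},u_n(x)\Big)\ \ge\ f\Big(x,\tfrac{\bR x}{\e_n},V\big(x,\tfrac{\bR x}{\e_n}\big)\Big)+\partial_\xi f\Big(x,\tfrac{\bR x}{\e_n},V\big(x,\tfrac{\bR x}{\e_n}\big)\Big)\cdot\Big(u_n(x)-V\big(x,\tfrac{\bR x}{\e_n}\big)\Big).
\]
Integrating over $\Omega$, recalling $f_R(x,z,\xi)=f(x,\bR z,\xi)$ by (H1), and letting $n\to\infty$: the two terms not involving $u_n$ converge by Proposition~\ref{prop:Radmissible} applied to $\psi(x,y):=f(x,y,V(x,y))$ and to $\Psi(x,y):=\partial_\xi f(x,y,V(x,y))\cdot V(x,y)$, both of which belong to $L^1(\Omega;C_\#(Y^\bblm))$ by (H1)--(H2) and the boundedness of $V$; the remaining term $\int_\Omega\partial_\xi f(x,\tfrac{\bR x}{\e_n},V(x,\tfrac{\bR x}{\e_n}))\cdot u_n(x)\,\dx$ converges by the definition of $\bR$-two-scale convergence, since $\Phi(x,y):=\partial_\xi f(x,y,V(x,y))\in L^{p'}(\Omega;C_\#(Y^\bblm;\RRd))$. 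This yields
\[
L\ \ge\ \int_\Omega\fint_{Y^\bblm}\Big[f(x,y,V(x,y))+\partial_\xi f(x,y,V(x,y))\cdot\big(U(x,y)-V(x,y)\big)\Big]\,\dx\dy.
\]

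To conclude, let $V$ run through a sequence $V_k\to U$ in $L^p(\Omega\times Y^\bblm;\RRd)$ with $V_k\in C_c(\Omega;C_\#(Y^\bblm;\RRd))$ (such functions are dense), and, passing to a subsequence, assume $V_k\to U$ a.e.\ and $|V_k|^p\to|U|^p$ in $L^1$. The first summand then tends to $\int_\Omega\fint_{Y^\bblm}f(x,y,U(x,y))\,\dx\dy$ by the Vitali convergence theorem, using the equiintegrability of $\{|V_k|^p\}$ together with the bound $0\le f(x,y,\xi)\le C(1+|\xi|^p)$ valid for \emph{all} $y\in\RRm$ (this extension of (H2) follows from (H1) and the density of $\{\bR z+\ZZ^\bblm:z\in\RRn\}$ in $\RRm/\ZZ^\bblm$ guaranteed by \eqref{eq:Rcriterion}); the second summand tends to $0$ by H\"older's inequality, since convexity and this growth bound force $|\partial_\xi f(x,y,\xi)|\le C'(1+|\xi|^{p-1})$ uniformly, so $\{\partial_\xi f(\cdot,\cdot,V_k)\}$ is bounded in $L^{p'}$ while $\|U-V_k\|_{L^p}\to0$. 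Combined with the previous display this proves \eqref{eq:lowerb}. The delicate point is precisely that $\bR$-two-scale convergence can only be tested against oscillating functions $\Phi(x,\bR x/\e)$ with $\Phi\in L^{p'}(\Omega;C_\#(Y^\bblm;\RRd))$: one cannot insert $u_n$ directly against $\partial_\xi f(x,\bR x/\e,U(x,\bR x/\e))$, both because this is merely $L^{p'}$-integrable and because $U$ itself need not be a cut-and-project profile; the detour through a smooth $V$ followed by the passage $V_k\to U$, which forces the use of the $(p-1)$-growth of $\partial_\xi f$ and of the extended growth bound on $f$, is the main technical obstacle.
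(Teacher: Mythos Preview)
Your proof is correct and follows essentially the same strategy as the paper: extract an $\bR$-two-scale limit $U=u+w$ with $w\in\sw$, linearize via the subgradient inequality at a smooth test profile $V$ (the paper's $\psi_j$), pass to the limit in $n$ using Proposition~\ref{prop:Radmissible} and the definition of $\bR$-two-scale convergence, and then let $V\to U$ in $L^p$. Your treatment of the final limit $V_k\to U$ is in fact more explicit than the paper's one-line invocation of Fatou: you separate the $f$-term (handled by Vitali and the equiintegrability of $|V_k|^p$) from the gradient term (handled by H\"older and the bound $|\partial_\xi f(x,y,\xi)|\le C'(1+|\xi|^{p-1})$), and you justify the extension of the growth bound in (H2) from $y\in\bR(\RRn)$ to all $y\in\RRm$ via the density induced by~\eqref{eq:Rcriterion}---a point the paper uses tacitly.
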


\begin{proof}

The condition 
 \(u\in\su\) follows from the fact that \(u_n\in \su\) for all \(n\in\NN\) together with the convergence  $u_n \weakly u$ in \(L^p(\Omega;\RRd)\).
Moreover, by Propositions~\ref{prop:necR2sc} and \ref{prop:R2sc-wc}, and  the uniqueness of \(\bR\)-two-scale limits (see Remark~\ref{rmk:uniq}),  we have   \(u_{n}\Rsc{\bR\text{-}2sc}
v \) for a vector-field  \(v\) that  is \((\Aa,\Aa_{\bR^*}^y)\)-free in the sense
of
Definition~\ref{def:AAR*free}, with \(\int_{Y^\bblm} v(\cdot, y)\,\dy = u(\cdot)\). In particular, we have the decomposition 
\begin{align*}
    v= u+ v_1, \quad  v_1 \in L^p(\Omega;L^p_\#
(Y^\bblm;\RRd)), \quad \sA^y_{\bR^*} v_1 = 0,\quad  \int_{Y^\bblm}
v_1(\cdot,y)\,\dy = 0. 
\end{align*}

Let \(\{\psi_j\}_{j\in\NN}\subset C_c(\Omega; C_\#(Y^\bblm;\RRl)) \)
be a sequence converging to \(v\) in \(L^p(\Omega \times Y^\bblm;\RRd)\) and pointwise in \(\Omega\times Y^\bblm\). By (H3), we have, for all \(n,\,j\in\NN\),
\begin{equation*}
\begin{aligned}
f\Big( x, \frac{\bR x}{\e_n}, u_n(x)\Big) \geqslant f \Big(x, \frac{\bR x}{\e_n},
\psi_j\Big( x, \frac{\bR x}{\e_n} \Big) \Big) + \frac{\partial f}{\partial
\xi} \Big( x, \frac{\bR x }{\e}, \psi_j\Big( x, \frac{\bR x}{\e}\Big)\Big) \cdot\Big(
u_n(x) - \psi_j\Big( x, \frac{\bR x}{\e} \Big) \Big).
\end{aligned}
\end{equation*}
Integrating  this estimate over $\Omega$ and passing to the limit as $n\to \infty,$ we invoke Proposition \ref{prop:Radmissible} and  (H2)--(H3) to infer that 
\begin{equation}
    \begin{aligned}
     \liminf_{n\to\infty} F_{\e_n}(u_n) &=\liminf_{n\to\infty} \int_\Omega  f\Big( x, \frac{\bR x}{\e_n}, u_n(x)\Big)\,\dx \\&\geqslant \int_\Omega \fint_{Y^\bblm} f(x,y,\psi_j(x,y))\,\dx\dy  + \int_\Omega \fint_{Y^\bblm}  \frac{\partial f}{\partial \xi}\big(x,y,\psi_j(x,y))\cdot\big( v(x,y) - \psi_j(x,y)\big )\,\dx\dy
    \end{aligned}
\end{equation}
for all \(j\in\NN\). Letting \(j\to\infty\) in this inequality, Fatou's lemma and (H1) yield  
    \begin{align*}
    \liminf_{n\to\infty} F_{\e_n}(u_n) &\geqslant \int_\Omega \fint_{Y^\bblm
} f(x,y, v(x,y))\,\dy \dx = \int_\Omega \fint_{Y^\bblm } f(x,y, u(x) + v_1(x,y))\,\dy \dx\\ & \geq \inf_{w\in \sw}\int_\Omega \fint_{Y^\bblm}
f(x,y,u(x)
+ w(x,y))\,\dx\dy = \mathcal{F}_{\mathrm{hom}}(u).\qedhere
    \end{align*}
\end{proof}

\begin{proof}[Proof of Theorem~\ref{t.mt}]
Proving that both the \textit{\(\Gamma\)-liminf inequality}  
and the 
 \textit{recovery sequence} properties in  Theorem~\ref{t.mt} hold is equivalent to proving that (see \cite{Da93}) for all \(u\in\su\), we have
\begin{equation}
\label{eq:GammaConv}
\begin{aligned}
\mathcal{F}_{\mathrm{hom}}(u)=\Gamma\text{-}\liminf_{n\to\infty} F_{\epsi_n}(u) = \Gamma\text{-}\limsup_{n\to\infty} F_{\epsi_n}(u),
\end{aligned}
\end{equation}
where  \(\{\epsi_n\}_{n\in\NN}\subset
\RR^+\) is an arbitrary sequence
converging to \(0\) and
\begin{equation*}
\begin{aligned}
& \Gamma\text{-}\liminf_{n\to\infty} F_{\epsi_n}(u)
:=\inf \Big\{ \liminf_{n\to\infty}  F_{\epsi_n}(u_n)\!:\, u_n
\weakly  u \text{ in } L^p(\Omega;\RRd) \text{ as } n\to\infty,
\enspace \Aa u_n =0 \text{ for all }n\in\NN\Big\},\\
&\Gamma\text{-}\limsup_{n\to\infty} F_{\epsi_n}(u)
:=\inf \Big\{ \limsup_{n\to\infty}  F_{\epsi_n}(u_n)\!:\, u_n
\weakly  u \text{ in } L^p(\Omega;\RRd) \text{ as } n\to\infty,
\enspace \Aa u_n =0 \text{ for all }n\in\NN\Big\}.
\end{aligned}
\end{equation*}

Taking the infimum over all admissible sequences on \eqref{eq:lowerb}, we conclude from  Proposition~\ref{prop.lbnd} that
\begin{equation}\label{eq:GammaConv1}
\begin{aligned}
 \Gamma\text{-}\liminf_{n\to\infty} F_{\epsi_n}(u) \geq \mathcal{F}_{\mathrm{hom}}(u).
\end{aligned}
\end{equation}
On the other hand, 
 Proposition~\ref{prop:upperb} with \(\kappa=1\) and  \(\bar w_0 =0\) yields %
\begin{equation*}
\begin{aligned}
\Gamma\text{-}\limsup_{n\to\infty} F_{\epsi_n}(u)
\leqslant \int_\Omega \fint_{Y^\bblm} f(x, \kappa y,
u(x) +
\bar w_1(x,y))\,\dy \dx + \delta 
\end{aligned}
\end{equation*}
for all \(\delta>0\) and \(\bar w_1\in \sw \). Hence, taking the infimum over   \(\bar w_1\in \sw \), and then letting \(\delta\to0\), we get
\begin{equation}\label{eq:GammaConv2}
\begin{aligned}
\Gamma\text{-}\limsup_{n\to\infty} F_{\epsi_n}(u)
\leqslant \mathcal{F}_{\mathrm{hom}}(u).
\end{aligned}
\end{equation}
Because \(\Gamma\text{-}\liminf_{n\to\infty} F_{\epsi_n}(u) \leq \Gamma\text{-}\limsup_{n\to\infty} F_{\epsi_n}(u)\), we obtain \eqref{eq:GammaConv} from \eqref{eq:GammaConv1} and \eqref{eq:GammaConv2}.
\end{proof}

Next, we establish an integral representation for the functional \(\mathcal{F}_{\mathrm{hom}}\) introduced in Theorem~\ref{t.mt}. To prove this integral representation, we use the following measurable selection criterion, proved in \cite[Lemma~3.10]{FoKr10} (also see \cite{CaVa77}).

\begin{lemma}\label{lem:meassel}
Let \(Z\) be a separable metric space, let \(T\) be a measurable space, and let \(\Upsilon : T\to 2^Z\) be a multi-valued function such that (i) for every \(t\in T\), \(\Upsilon (t)\subset Z\) is nonempty and open, and (ii) for every \(z\in Z\), \(\{t\in T\!:\, z\in \Upsilon(t)\} \subset T\)  is measurable. Then, \(\Upsilon\) admits a measurable selection; that is, there exists a measurable function, \(\upsilon: T\to Z\), such that \(\upsilon(t) \in \Upsilon(t)\) for all \(t\in T\).
\end{lemma}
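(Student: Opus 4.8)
The statement is the open-valued version of the classical Kuratowski--Ryll-Nardzewski measurable selection theorem, and I would give the short direct proof via enumeration of a countable dense set. Since $Z$ is a separable metric space, I fix a countable dense subset $\{z_m\}_{m\in\NN}\subset Z$. For each $t\in T$, the value $\Upsilon(t)$ is a nonempty open subset of $Z$, so by density it contains at least one $z_m$; hence $N(t):=\{m\in\NN : z_m\in\Upsilon(t)\}$ is a nonempty subset of $\NN$, and I set $n(t):=\min N(t)$ and $\upsilon(t):=z_{n(t)}$. By construction $\upsilon(t)\in\Upsilon(t)$ for every $t\in T$, so $\upsilon$ is a selection of $\Upsilon$, and the only remaining task is to check that $\upsilon$ is measurable.

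For the measurability, I introduce for each $m\in\NN$ the set $E_m:=\{t\in T : z_m\in\Upsilon(t)\}$, which is measurable by hypothesis~(ii). Then, for every $m\in\NN$,
\[
\{t\in T : n(t)=m\}=E_m\setminus\bigcup_{j=1}^{m-1}E_j
\]
is a finite Boolean combination of measurable sets, hence measurable. Therefore the map $n\colon T\to\NN$, with $\NN$ equipped with the discrete $\sigma$-algebra, is measurable, because the preimage of any $S\subseteq\NN$ equals the countable union $\bigcup_{m\in S}\{t : n(t)=m\}$. Composing $n$ with the trivially measurable map $\NN\ni m\mapsto z_m\in Z$ shows that $\upsilon$ is measurable, which finishes the argument.

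I do not anticipate a genuine obstacle. The two points that deserve a word of justification are: first, that a nonempty open set necessarily meets the dense set $\{z_m\}$ --- immediate from the definition of density, and this is precisely where the openness in hypothesis~(i) enters; and second, that a countably valued map is measurable as soon as each of its level sets is, which is the elementary fact used above. No successive-approximation scheme of the type needed for closed-valued selections is required here, exactly because the values $\Upsilon(t)$ are open.
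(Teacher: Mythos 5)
Your proof is correct and complete: the enumeration of a countable dense set, the choice of the first index $m$ with $z_m\in\Upsilon(t)$, and the observation that a countably valued map with measurable level sets is measurable together give exactly the standard argument for open-valued multifunctions. The paper itself offers no proof but defers to \cite[Lemma~3.10]{FoKr10}, whose proof is this same elementary construction, so your argument coincides with the intended one.
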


\begin{proposition}\label{prop:intrep}
Under the assumptions of
Theorem~\ref{t.mt}, for all \(u\in\su\), we have
\begin{equation}
\label{eq:intrep}
\begin{aligned}
\mathcal{F}_{\mathrm{hom}}(u) = \int_\Omega f_{\rm hom} (x,u(x))\,\dx,
\end{aligned}
\end{equation}
where
\begin{equation*}
\begin{aligned}
f_{\rm hom} (x,\xi)=\inf_{v\in\sv } \fint_{Y^\bblm} f(x,y,\xi + v(y))\,\dy
\end{aligned}
\end{equation*}
with \(\sv\) given by \eqref{eq:setVA}.
\end{proposition}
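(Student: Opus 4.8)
The plan is to prove the two inequalities defining \eqref{eq:intrep} separately, the nontrivial one resting on a measurable selection through Lemma~\ref{lem:meassel}. I begin with a preliminary observation: the zero function lies in $\sv$ (it is trivially $\Aa_{\bR^*}$-free with vanishing mean), so $f_{\rm hom}(x,\xi)\leq\fint_{Y^\bblm}f(x,y,\xi)\,\dy\leq C(1+|\xi|^p)$ by (H2). Next I would record that $f_{\rm hom}$ is a Carath\'eodory integrand. For each fixed $(x,\xi)$, the functional $v\in\sv\mapsto\fint_{Y^\bblm}f(x,y,\xi+v(y))\,\dy$ is continuous for the $L^p$-norm, by (H1), (H2) and a dominated-convergence-along-subsequences argument. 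Since $\sv$ is a closed subspace of the separable space $L^p_\#(Y^\bblm;\RRd)$ it is separable, so taking $\{v_k\}_k\subset\sv$ dense yields $f_{\rm hom}(x,\xi)=\inf_k\fint_{Y^\bblm}f(x,y,\xi+v_k(y))\,\dy$; each term is, for fixed $\xi$, measurable in $x$ by Fubini (the integrand is jointly measurable and dominated by $C(1+|\xi+v_k(y)|^p)\in L^1(\Omega\times Y^\bblm)$), so $f_{\rm hom}(\cdot,\xi)$ is measurable. Finally $\xi\mapsto f_{\rm hom}(x,\xi)$ is finite and convex (an infimum of convex maps, by (H3)), hence continuous. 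Thus $x\mapsto f_{\rm hom}(x,u(x))$ is measurable and, by the bound above, lies in $L^1(\Omega)$.

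For the inequality $\mathcal{F}_{\mathrm{hom}}(u)\geq\int_\Omega f_{\rm hom}(x,u(x))\,\dx$, take any $w\in\sw$. Because $\int_{Y^\bblm}w(\cdot,y)\,\dy=0$, in the notation of Definition~\ref{def:AAR*free} we have $\bar w_0=0$ and $\bar w_1=w$, so the $(\Aa,\Aa_{\bR^*}^y)$-freeness of $w$ reduces to $\Aa_{\bR^*}w(x,\cdot)=0$ for a.e.\ $x$; hence $w(x,\cdot)\in\sv$ for a.e.\ $x$, and therefore $\fint_{Y^\bblm}f(x,y,u(x)+w(x,y))\,\dy\geq f_{\rm hom}(x,u(x))$ for a.e.\ $x$. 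Integrating over $\Omega$ and taking the infimum over $w\in\sw$ gives the inequality.

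For the reverse inequality, fix $\eta>0$ and set $\Upsilon(x):=\{v\in\sv:\fint_{Y^\bblm}f(x,y,u(x)+v(y))\,\dy<f_{\rm hom}(x,u(x))+\eta\}$. By the continuity and measurability recorded above, $\Upsilon(x)$ is nonempty and open in $\sv$ for every $x$, and $\{x:v\in\Upsilon(x)\}$ is measurable for every $v\in\sv$; hence Lemma~\ref{lem:meassel}, with $Z=\sv$ and $T=\Omega$, provides a measurable selection $\upsilon\colon\Omega\to\sv$ with $\upsilon(x)\in\Upsilon(x)$ for all $x$. Since no $L^p$-bound on $\upsilon$ is available, I would truncate: for $N\in\NN$ put $\Omega_N:=\{x\in\Omega:\|\upsilon(x)\|_{L^p(Y^\bblm;\RRd)}\leq N\}$, so that $|\Omega\setminus\Omega_N|\to0$, and define $w_N(x,y):=\upsilon(x)(y)\,\chi_{\Omega_N}(x)$; this is strongly measurable into $L^p_\#(Y^\bblm;\RRd)$, and using $0\in\sv$ one checks $w_N\in\sw$ (its $Y^\bblm$-mean vanishes, $\Aa_{\bR^*}^yw_N=0$, and $\bar w_{N,0}=0$ makes $\Aa\bar w_{N,0}=0$ automatic), with $\|w_N\|_{L^p(\Omega\times Y^\bblm;\RRd)}^p\leq N^p|\Omega|<\infty$. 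Then
\[\mathcal{F}_{\mathrm{hom}}(u)\leq\int_{\Omega_N}\fint_{Y^\bblm}f(x,y,u(x)+\upsilon(x)(y))\,\dy\,\dx+\int_{\Omega\setminus\Omega_N}\fint_{Y^\bblm}f(x,y,u(x))\,\dy\,\dx,\]
where the first term is at most $\int_\Omega f_{\rm hom}(x,u(x))\,\dx+\eta|\Omega|$ (by the choice of $\upsilon$ and $f_{\rm hom}\geq0$) and the second is at most $\int_{\Omega\setminus\Omega_N}C(1+|u(x)|^p)\,\dx\to0$ as $N\to\infty$, by (H2) and absolute continuity of the integral. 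Letting $N\to\infty$ and then $\eta\to0^+$ completes the proof.

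The main obstacle is twofold: verifying that $f_{\rm hom}$ is Carath\'eodory, which is what makes the measurable-selection machinery applicable and the right-hand side of \eqref{eq:intrep} meaningful; and the truncation step, which is dictated by the absence of a coercivity hypothesis on $f$ — a measurable family of near-optimal competitors for the cell problem exists but has no a priori $L^p$-bound, so it must be cut off and the resulting tail controlled by the mere upper growth (H2).
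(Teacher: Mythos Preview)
Your argument is correct and mirrors the paper's: the easy inequality by pointwise comparison, the reverse one via the measurable-selection Lemma~\ref{lem:meassel}, followed by a cutoff to force the selection into $\sw$. The only difference in execution is that the paper invokes Lusin's theorem to obtain a set $\Omega_\delta$ of nearly full measure on which the selection is (continuous, hence bounded, hence) in $L^p$, whereas you truncate on the sublevel sets $\{\|\upsilon(x)\|_{L^p}\leq N\}$; both devices serve the same purpose and the tail is controlled identically via (H2). One small imprecision worth fixing: the parenthetical ``an infimum of convex maps'' is not a valid justification for the convexity of $\xi\mapsto f_{\rm hom}(x,\xi)$, since infima of convex functions are generally not convex; the correct reason is that this is the marginal of the \emph{jointly} convex map $(\xi,v)\mapsto\fint_{Y^\bblm}f(x,y,\xi+v(y))\,\dy$ minimized over the linear (hence convex) space $\sv$. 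The conclusion---and hence your Carath\'eodory argument---stands.
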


\begin{proof}
Let  \(u\in\su\). Note that, by (H2), we have

\begin{equation}
\label{eq:pgforfhom}
\begin{aligned}
0\leq f_{\rm hom} (x,\xi)\leq C(1+ |\xi|^p)
\end{aligned}
\end{equation}
for all \((x,\xi)\in\Omega\times \RRd\). Moreover,
\begin{equation}\label{eq:measfhom}
\begin{aligned}
x\in\Omega \mapsto  f_{\rm hom} (x,u(x))
\end{aligned}
\end{equation}
is a measurable map. In fact, let  \(V_\Aa\) be a countable and dense  subset of
\(\sv\)  with respect to the (strong) topology of \(L^p_\#(Y^\bblm;\RRd)\). We observe that such set \(V_\Aa\) exists because \(\sv\) is a subset of the separable metric space \(L^p_\#(Y^\bblm;\RRd)\).  Then, the continuity of \(f\) (see (H1)), Vitali--Lebesgue's theorem, and (H2) yield
\begin{equation*}
\begin{aligned}
\inf_{v\in\sv } \fint_{Y^\bblm} f(x,y,u(x) + v(y))\,\dy = 
\inf_{v\in V_\Aa} \fint_{Y^\bblm} f(x,y,u(x) + v(y))\,\dy,
\end{aligned}
\end{equation*}
from which we conclude the measurability of the map in \eqref{eq:measfhom}.

Fix \(w\in \sw\). For a.e.~\(x\in\Omega\), we have \(w(x,\cdot) \in \sv\); hence, for  a.e.~\(x\in\Omega\), %
\begin{equation*}
\begin{aligned}
\inf_{v\in\sv } \fint_{Y^\bblm} f(x,y,u(x) + v(y))\,\dy \leq \fint_{Y^\bblm} f(x,y,u(x) + w(x,y))\,\dy.
\end{aligned}
\end{equation*}
Integrating this estimate over \(\Omega\), and then taking the infimum over \(w\in \sw\), we conclude that
\begin{equation*}
\begin{aligned}
 \int_\Omega f_{\rm hom} (x,u(x))\,\dx \leq \mathcal{F}_{\mathrm{hom}}(u).
\end{aligned}
\end{equation*}

To prove the converse inequality, we first observe that, by \eqref{eq:pgforfhom}, we may assume that 
\begin{equation}\label{eq:fhomfinite}
\begin{aligned}
f_{\rm hom} (x,u(x)) \in \RR \text{  for all } x\in\Omega,
\end{aligned}
\end{equation}
without loss of generality. Fix  \(\delta>0\), and consider the multi-valued function  \(\Upsilon_\delta:\Omega \to 2^{L^p_\#(Y^\bblm;\RRd)}\) defined, for \(x\in\Omega\), by
\begin{equation*}
\begin{aligned}
\Upsilon_\delta(x) := \bigg\{v\in \sv\!:\, \fint_{Y^\bblm} f(x,y, u(x) + v(y))\,\dy <f_{\rm hom} (x,u(x)) + \delta\bigg\}. 
\end{aligned}
\end{equation*}
Also, let \(\bar\delta\in (0,\delta)\) be such that
\begin{equation}\label{eq:eqiiu}
\begin{aligned}
\int_E C(1+|u(x)|^p)\,\dx <\delta
\end{aligned}
\end{equation}
whenever \(E\subset\Omega\) is a measurable set with \(\mathcal{L}^\bbln(E)<\bar\delta\), where \(C\) is given by (H2).
 
By \eqref{eq:fhomfinite}, we have \(\Upsilon_\delta(x) \not= \emptyset\) for all \(x\in\Omega\). Furthermore, arguing as above, using the continuity of \(f\), Vitali--Lebesgue's theorem, and (H2), it can be checked that for each \(x\in\Omega\),  \(L^p_\#(Y^\bblm;\RRd)\setminus \Upsilon_\delta(x)\) is a closed subset of \(L^p_\#(Y^\bblm;\RRd)\). On the other hand, recalling the measurability of the map in \eqref{eq:measfhom}, we have that
\begin{equation*}
\begin{aligned}
x\mapsto h(x):=  \fint_{Y^\bblm} f(x,y, u(x) +
v(y))\,\dy -f_{\rm hom} (x,u(x)) - \delta
\end{aligned}
\end{equation*}
defines a measurable map for each \(v\in L^p_\#(Y^\bblm;\RRd)\). Thus, \(\{x\in\Omega\!:\, v\in \Upsilon_\delta(x)\} = h^{-1}((-\infty,0)) \) is a measurable set for each \(v\in L^p_\#(Y^\bblm;\RRd)\). Consequently, by Lemma~\ref{lem:meassel}, there exists a measurable selection, \(w_\delta:\Omega\to L^p_\#(Y^\bblm;\RRd) \), of \(\Upsilon_\delta\). Moreover, by Lusin's theorem, \(w_\delta \in L^p(\Omega_\delta;L^p_\#(Y^\bblm;\RRd)) \) for a suitable measurable set \(\Omega_\delta\) such that \(\mathcal{L}^\bbln(\Omega\setminus \Omega_\delta)<\bar\delta\). 

Finally, we define \(\bar w_\delta \in
\sw\) by setting \(\bar w_\delta(x) := w_\delta(x)\) if \(x\in\Omega_\delta\), and \(\bar w_\delta(x) := 0\)
if \(x\in\Omega\setminus \Omega_\delta\). Then, using the definition of \(\mathcal{F}_{\mathrm{hom}}(u)\), (H2),  \eqref{eq:eqiiu}, and \eqref{eq:pgforfhom},  we get
\begin{equation*}
\begin{aligned}
\mathcal{F}_{\mathrm{hom}}(u) &\leq \int_\Omega \fint_{Y^\bblm}
f(x,y,u(x)
+ \bar w_\delta(x,y))\,\dx\dy  \leq \delta + \int_{\Omega_\delta} \fint_{Y^\bblm}
f(x,y,u(x)
+  w_\delta(x,y))\,\dx\dy \\
&\leq \delta(1+\mathcal{L}^\bbln(\Omega)) + \int_\Omega f_{\rm hom} (x,u(x))\,\dx,
\end{aligned}
\end{equation*}
from which we conclude the desired inequality by letting \(\delta\to0\). 
\end{proof}

Finally, we prove Theorem~\ref{thm:main}.

\begin{proof}[Proof of Theorem~\ref{thm:main}]
Theorem~\ref{thm:main} is an immediate consequence of Theorem~\ref{t.mt} (also see \eqref{eq:GammaConv}) and Proposition~\ref{prop:intrep}.
\end{proof}

\section{The curl case} \label{sec:counting}

In this section, we prove Theorem~\ref{thm:alterchaRl} that provides
an equivalent alterative characterization
for the \(\bR\)-two-scale limit of bounded sequences in \(W^{1,p}\), corresponding to \(\Aa=\rm curl\) in Theorem~\ref{thm:main2}.
In this case, by Proposition~\ref{prop:compactenessR2sc}
and extracting a subsequence if necessary,
we have   \(u_\epsi\Rsc{\bR\text{-}2sc} u_0 \) and
 \(\grad u_\epsi\Rsc{\bR\text{-}2sc}  U_0 \) for some \(u_0\in
 L^p(\Omega \times Y^\bblm)\) and \(U_0\in
 L^p(\Omega \times Y^\bblm; \RRn)\).  Next, to  study   the relationship between \(u_0\) and \(U_0\), we  use Fourier analysis. As we mentioned before,
this is the approach adopted in \cite{BoGuZo10}; however,
 the arguments in \cite{BoGuZo10} hinge on the Parseval and
 Plancherel identities, which are  valid in \(L^2(\Omega)\) only.
 Instead, our main tool here relies on the following theorem, which 
may be found in \cite{Gr14}. For simplicity, we take \(Y^\bblm=[0,1)^\bblm\),
and we use  the Einstein
convention on repeated indices.

\begin{theorem}\label{thm:Grafakos} Let \( w \in L^p_\#(Y^\bblm)\), and define\footnote{In the literature, \(w_N\) are called the square partial sums of the Fourier series of \(w\).}
\begin{equation*}
\begin{aligned}
 w_N(y):=\sum_{k\in \ZZ^\bblm\atop |k|_\infty \leq N } \hat w_k e^{2\pi i k
\cdot y}, \enspace y\in Y^\bblm, \enspace N\in\NN,
\end{aligned}
\end{equation*}
where \(\hat w_k:= \int_{Y^\bblm} w(y)e^{-2\pi i k \cdot
y} \, \dy\), \(k\in \ZZ^\bblm \), are the Fourier coefficients of \(w\). Then, 
\begin{equation}\label{eq:FourierLpconv}
\begin{aligned}
&\Big\Vert \sup_{N\in\NN}| w_N| \Big\Vert_{L^p(Y^\bblm)} \leq C_{p,m} \Vert w\Vert_{L^p(Y^\bblm)}, \\ & \lim_{N\to\infty} \Vert
w_N -  w\Vert_{L^p(Y^\bblm)}=0, \\ &    
\lim_{N\to\infty}  w_N(y)
= w(y) \text{ for \aev\  } y\in Y^\bblm,
\end{aligned}
\end{equation}
where \(C_{p,m}\) is a positive constant  depending on \(p\) and \(m\) only.
Moreover, for all \(k\in\ZZ^\bblm\),
\begin{equation}
\label{eq:coefFourierbdd}
\begin{aligned}
 |\hat w_k|^p\leq \Vert w\Vert_{L^p(Y^\bblm)}^p.
\end{aligned}
\end{equation}
\end{theorem}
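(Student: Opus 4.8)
The plan is to derive all four assertions from the one-dimensional Carleson--Hunt theorem (boundedness on $L^p([0,1))$, $1<p<\infty$, of the maximal partial-sum operator) together with the classical M.~Riesz theorem, exploiting that square partial sums factor through the coordinate directions. The coefficient bound \eqref{eq:coefFourierbdd} is the only elementary point: since $\mathcal{L}^\bblm(Y^\bblm)=1$, H\"older's inequality gives $|\hat w_k|\le \int_{Y^\bblm}|w(y)|\,\dy\le \|w\|_{L^p(Y^\bblm)}$, and raising both sides to the power $p$ yields the claim. For the rest, write $S^{(j)}_N$ for the one-variable $N$-th partial-sum operator acting on the $j$-th coordinate only; these operators commute, and $w_N=S^{(1)}_N S^{(2)}_N\cdots S^{(\bblm)}_N w$ because $|k|_\infty\le N$ means $|k_j|\le N$ for every $j$.

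For the maximal inequality in \eqref{eq:FourierLpconv}, I would dominate $\sup_N|w_N|$ by the multi-parameter maximal operator $\sup_{N_1,\dots,N_\bblm}\big|S^{(1)}_{N_1}\cdots S^{(\bblm)}_{N_\bblm}w\big|$ and establish an $L^p(Y^\bblm)$ bound for the latter by induction on $\bblm$. The case $\bblm=1$ is exactly Carleson--Hunt. The main obstacle is the inductive step: one cannot simply compose the one-variable maximal bounds, because after applying the $(\bblm-1)$-dimensional hypothesis to $S^{(1)}_{N_1}w$ the result still depends on $N_1$, and partial-sum operators are not order-preserving (their Dirichlet kernels change sign), so the supremum over $N_1$ cannot be pulled through naively. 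The standard way around this is to use the \emph{vector-valued} form of the one-dimensional Carleson maximal inequality --- a Fefferman--Stein-type bound of the form $\big\|\big(\sum_\ell|\mathcal{C}^{(1)}f_\ell|^r\big)^{1/r}\big\|_{L^p}\le C\big\|\big(\sum_\ell|f_\ell|^r\big)^{1/r}\big\|_{L^p}$ for the one-variable Carleson operator $\mathcal{C}^{(1)}$ --- which, combined with a Fubini argument in the first variable, lets the induction close. This is precisely the content carried out in \cite{Gr14}.

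Granting the maximal bound, the $L^p$-convergence in \eqref{eq:FourierLpconv} needs only the M.~Riesz theorem: it gives a dimension-free bound $\sup_N\|S^{(j)}_N\|_{L^p(Y^\bblm)\to L^p(Y^\bblm)}\le A_p$ (by Fubini in the $j$-th variable), hence $\sup_N\|w\mapsto w_N\|_{L^p(Y^\bblm)\to L^p(Y^\bblm)}\le A_p^{\bblm}$; since trigonometric polynomials are dense in $L^p(Y^\bblm)$ and equal their own square partial sums once $N$ exceeds their degree, a standard $3\varepsilon$ argument yields $\|w_N-w\|_{L^p(Y^\bblm)}\to0$.

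The almost-everywhere convergence is then the usual maximal-function (Banach-principle) argument. Fix $\lambda>0$ and choose a trigonometric polynomial $P$ with $\|w-P\|_{L^p(Y^\bblm)}$ small; since $P_N=P$ for large $N$, one has the pointwise bound $\limsup_N|w_N-w|\le \sup_N|(w-P)_N|+|P-w|$, so by the maximal inequality and Chebyshev's inequality
\[
\mathcal{L}^\bblm\big(\{\,\limsup_N|w_N-w|>\lambda\,\}\big)\le \Big(\tfrac{2}{\lambda}\Big)^p\big(C_{p,\bblm}^p+1\big)\|w-P\|_{L^p(Y^\bblm)}^p .
\]
Letting $\|w-P\|_{L^p(Y^\bblm)}\to0$ shows this set is Lebesgue-null for each $\lambda>0$; taking the union over $\lambda=1/\ell$, $\ell\in\NN$, gives $w_N\to w$ a.e.\ in $Y^\bblm$, completing the proof.
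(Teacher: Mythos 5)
Your proposal is correct and matches the paper's treatment: the paper likewise disposes of \eqref{eq:coefFourierbdd} by an elementary integral estimate (Jensen's inequality with $\mathcal{L}^\bblm(Y^\bblm)=1$, equivalent to your H\"older step) and simply cites \cite[Thm.~4.1.8 and Thm.~4.3.16]{Gr14} for the three assertions in \eqref{eq:FourierLpconv}. What you add is an accurate sketch of what those citations contain --- the M.~Riesz/density argument for $L^p$-convergence and the vector-valued Carleson--Hunt inequality needed to close the induction for the square-sum maximal operator --- so the two arguments rest on exactly the same external input.
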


\begin{proof}
The proof of \eqref{eq:FourierLpconv} may be found in \cite[ Thm.~4.1.8 and Thm.~4.3.16]{Gr14}) (see also \cite[Def.~3.2.3]{Gr14}).
 
To prove \eqref{eq:coefFourierbdd}, we use Jensen's inequality
and the equality \(|e^{-2\pi i k \cdot
y}|=1\),   \(k\in\ZZ^\bblm\), to obtain
\begin{align*}
& |\hat w_k|^p\leq \int_{Y^\bblm} \big| w(y)e^{-2\pi i k \cdot
y}\big|^p \, \dy = \Vert w\Vert_{L^p(Y^\bblm)}^p. \qedhere
\end{align*}
\end{proof}

\begin{remark}\label{rmk:onGR2}
Let \(w\in L^p(\Omega;L^p_\#(Y^\bblm))\), and define
\begin{equation*}
\begin{aligned}
 v_N(x):=\int_{Y^\bblm}  |w_N(x,y) - w(x,y)|^p\,\dy, \enspace x\in\Omega, \enspace N\in\NN, 
\end{aligned}
\end{equation*}
where \(w_N(x,y):=\sum_{k\in \ZZ^\bblm\atop |k|_\infty \leq
N } \hat w_k (x)e^{2\pi
i k
\cdot y} \) with \(\hat w_k(x):= \int_{Y^\bblm} w(x,y)e^{-2\pi i k \cdot
y} \, \dy\), \(k\in \ZZ^\bblm \).
By \eqref{eq:FourierLpconv}, for \aev-\(x\in\Omega\), we have 
\begin{equation*}
\begin{aligned}
\sup_{N\in\NN}|v_N(\cdot)| \leq \tilde C_{p,m} \int_{Y^\bblm} |w(\cdot,y)|^p\,\dy \in L^1(\Omega) \,\text{ and } \lim_{N\to \infty} v_N(x) =0.
\end{aligned}
\end{equation*}
Thus, by the Lebesgue
dominated convergence theorem, it follows that \(w_N \to w\)
in \(L^p(\Omega\times Y^\bblm)\) as \(N\to\infty\).
\end{remark}

Next, we study some properties of the space \(\Gg_{\bR}^p\) introduced in \eqref{eq:spaceGR} that will be
useful in the sequel.
We first  observe that
if \(p=2\), then it can be checked that
\begin{equation*}
\begin{aligned}
\Gg_{\bR}^2= \bigg\{ w \in L^2_\#(Y^\bblm; \RRn)\!: \,   w(y)=\sum_{k\in
\ZZ^\bblm\backslash \{0\}} \lambda_k \bR^*k e^{2\pi
i k
\cdot y} \text{ for some } \{\lambda_k\}_{k\in \ZZ^\bblm\backslash \{0\}
} \subset \Cc \bigg\},
\end{aligned}
\end{equation*}
and we recover the space introduced in \cite{BoGuZo10}.

\begin{lemma}\label{lem:GRclosed}
Assume that \(\bR\) satisfies \eqref{eq:Rcriterion}. Then, the vector space \(\Gg_{\bR}^p\) introduced in \eqref{eq:spaceGR} is a closed subspace of \(L^p_\#(Y^\bblm; \RRn)\).
\end{lemma}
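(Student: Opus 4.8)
The plan is to exhibit $\Gg_{\bR}^p$ as an intersection of kernels of bounded linear functionals on $L^p_\#(Y^\bblm;\RRn)$, which makes both the subspace property and closedness automatic. The only analytic input needed is that, for each $k\in\ZZ^\bblm$, the Fourier-coefficient map $w\in L^p_\#(Y^\bblm;\RRn)\mapsto\hat w_k\in\Cc^\bbln$ is bounded and linear. Linearity is immediate, and boundedness follows from \eqref{eq:coefFourierbdd} applied to each scalar component of $w$ (equivalently, directly from $|\hat w_k|\leqslant\int_{Y^\bblm}|w(y)|\,\dy\leqslant\|w\|_{L^p(Y^\bblm;\RRn)}$, using $\mathcal{L}^\bblm(Y^\bblm)=1$ and $p>1$).

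First I would set $\Lambda_k:=\{\mu\,\bR^*k:\mu\in\Cc\}\subseteq\Cc^\bbln$. For $k=0$ this is $\{0\}$, while for $k\neq0$ the Diophantine condition \eqref{eq:Rcriterion} gives $\bR^*k\neq0$, so $\Lambda_k$ is a one-dimensional complex subspace of $\Cc^\bbln$; in either case $\Lambda_k$ is a finite-dimensional, hence closed, linear subspace. By definition, $w\in\Gg_{\bR}^p$ if and only if $\hat w_k\in\Lambda_k$ for every $k\in\ZZ^\bblm$ (the requirement $\lambda_0=0$ being exactly $\hat w_0\in\Lambda_0=\{0\}$). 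Writing $\hat w_k=a_k+\mathrm{i}\,b_k$ with $a_k,b_k\in\RRn$, the membership $\hat w_k\in\Lambda_k$ is equivalent to $a_k\cdot\eta=0$ and $b_k\cdot\eta=0$ for all $\eta\in\RRn$ with $\eta\cdot\bR^*k=0$. Hence
\begin{equation*}
\Gg_{\bR}^p=\bigcap_{k\in\ZZ^\bblm}\ \bigcap_{\eta\in\RRn,\ \eta\cdot\bR^*k=0}\ \ker\ell_{k,\eta},\qquad \ell_{k,\eta}(w):=\sum_{i=1}^{\bbln}(\hat w_k)_i\,\eta_i,
\end{equation*}
and each $\ell_{k,\eta}\colon L^p_\#(Y^\bblm;\RRn)\to\Cc$ is a bounded linear functional by the previous paragraph. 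An arbitrary intersection of kernels of bounded linear functionals is a closed subspace, so $\Gg_{\bR}^p$ is a closed subspace of $L^p_\#(Y^\bblm;\RRn)$.

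If a more self-contained argument were preferred, I would instead run the direct sequential version: given $w^{(n)}\in\Gg_{\bR}^p$ with $w^{(n)}\to w$ in $L^p_\#(Y^\bblm;\RRn)$, the boundedness of the coefficient maps yields $\hat w^{(n)}_k\to\hat w_k$ in $\Cc^\bbln$ for every $k$; since $\hat w^{(n)}_k\in\Lambda_k$ and $\Lambda_k$ is closed, the limit satisfies $\hat w_k\in\Lambda_k$, i.e.\ $\hat w_k=\lambda_k\bR^*k$ for some $\lambda_k\in\Cc$, with $\lambda_0=0$ forced by $\hat w_0=\lim_n\hat w^{(n)}_0=0$; therefore $w\in\Gg_{\bR}^p$. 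Linearity of $\Gg_{\bR}^p$ is then noted separately from $\widehat{(\alpha w+\beta v)}_k=\alpha\hat w_k+\beta\hat v_k$.

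There is no serious obstacle here: closedness is a soft consequence of the continuity of the individual Fourier-coefficient functionals --- which is precisely the uniform bound recorded in \eqref{eq:coefFourierbdd}, the substitute for the Plancherel identity available only at $p=2$ --- together with the elementary fact that a line through the origin in a finite-dimensional space is closed. The only point worth flagging is that, for $p\neq2$, one cannot describe $\Gg_{\bR}^p$ through an $\ell^2$-type series as in the $p=2$ case displayed just before the lemma, so the coefficient-by-coefficient description above is essential; the hypothesis \eqref{eq:Rcriterion} is used only to guarantee $\bR^*k\neq0$ for $k\neq0$, so that each $\Lambda_k$ is genuinely one-dimensional.
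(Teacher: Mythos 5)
Your proof is correct and rests on the same two ingredients as the paper's: the bound \eqref{eq:coefFourierbdd}, which makes each Fourier-coefficient map $w\mapsto\hat w_k$ continuous on $L^p_\#(Y^\bblm;\RRn)$, and the condition \eqref{eq:Rcriterion}, which guarantees that $\Cc\,\bR^*k$ is a genuine (hence closed) line for $k\neq0$. The only difference is packaging: the paper runs the sequential argument and shows the scalars $\lambda_{k_0}^j$ form a Cauchy sequence by dividing by $|\bR^*k_0|$, whereas you either observe directly that $\Lambda_k=\Cc\,\bR^*k$ is closed in $\Cc^\bbln$ or realize $\Gg_{\bR}^p$ as an intersection of kernels of bounded linear functionals --- a cosmetic variant whose small advantage is that it also delivers the subspace property explicitly.
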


\begin{proof}
Let \(\{w_j\}_{j\in\Nn} \subset \Gg_{\bR}^p \) and \(w\in L^p_\#(Y^\bblm; \RRn)\) be such that \(\lim_{j\to\infty} \Vert w_j - w\Vert_{L^p(Y^\bblm;\RRn)} =0\). We want to show that \(w\in \Gg_{\bR}^p\).

For each \(j\in\NN\), let \(\hat w^j_k\) and \(\hat w_k\) denote the Fourier coefficients of \(w_j\) and \(w\), respectively. By \eqref{eq:coefFourierbdd}, we have \(\lim_{j\to\infty} |\hat w_k^j - \hat w_k| =0\).

On the other hand, by definition of \(\Gg_{\bR}^p\),  for each \(k\in\ZZ^\bblm\) and \(j\in\NN\), there exists \(\lambda_k^j\in\Cc\) such that \(\hat w_k^j = \lambda_k^j
\bR^*k\) and  \(\lambda_0^j =0\). In particular, \(\hat w_0 =0\). Fix \(k_0\in\ZZ^\bblm \backslash \{0\}\); by \eqref{eq:coefFourierbdd}, for all \(j,\, j'\in \NN\), we have
\begin{equation*}
\begin{aligned}
\big|\lambda_{k_0}^j - \lambda_{k_0}^{j'}\big| \big|\bR^*k_0\big| =\big |\big(\lambda_{k_0}^j - \lambda_{k_0}^{j'}\big)
\bR^*k_0\big| = \big|\hat w^j_{k_0} - \hat w^{j'}_{k_0}\big| \leq \Vert w_j - w_{j'} \Vert_{L^p(Y^\bblm:\RRn)}.
\end{aligned}
\end{equation*}
Because \(R^T(k_0) \not= 0\) by \eqref{eq:Rcriterion}, we conclude that 
\(\{\lambda_{k_0}^j\}_{j\in\NN}\) is a Cauchy sequence in \(\Cc\). Thus, there exists \(\lambda_{k_0} \in\Cc\) such that \(\lim_{j\to\infty} |\lambda_{k_0}^j - \lambda_{k_0}| =0 \). Consequently, passing the equality  \(\hat w_{k_0}^j = \lambda_{k_0}^j
\bR^*k_0\) to the limit as \(j\to\infty\), we obtain \(\hat w_{k_0} = 
\lambda_{k_0}
\bR^*k_0 \). 
\end{proof}

\begin{lemma}\label{lem:orthGR}
Assume that \(\bR\) satisfies \eqref{eq:Rcriterion}, and let \(w_0\in L^p_\#(Y^\bblm; \RRn)\) be such that
\begin{equation}
\label{eq:orthGR}
\begin{aligned}
\int_{Y^\bblm} w_0(y) \cdot \psi(y)\,\dy =0
\end{aligned}
\end{equation}
for all \(\psi\in C^\infty_\#(Y^\bblm;\RRn)\) with  \(\frac{\partial
\psi_l}{\partial y_\tau}\bR_{\tau l}=0\) in \( Y^\bblm\), where 
  \(l\in\{1, ...,\bbln\}\) and \(\tau \in\{1, ..., \bblm\} \). Then, \(w_0\in \Gg_{\bR}^p\).
\end{lemma}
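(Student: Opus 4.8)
The plan is to test the hypothesis \eqref{eq:orthGR} against suitable trigonometric polynomials and read off from this the structure of the Fourier coefficients of $w_0$. For $k \in \ZZ^\bblm$ write $\hat w_k := \int_{Y^\bblm} w_0(y)\,e^{-2\pi i k\cdot y}\,\dy$, which is well defined since $L^p_\#(Y^\bblm;\RRn) \subset L^1_\#(Y^\bblm;\RRn)$. Recalling \eqref{eq:spaceGR}, proving $w_0 \in \Gg_{\bR}^p$ amounts to showing that $\hat w_0 = 0$ and that, for every $k \in \ZZ^\bblm\setminus\{0\}$, the vector $\hat w_k \in \Cc^\bbln$ is a complex scalar multiple of $\bR^* k$.

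First I would handle the zero mode: with $\psi \equiv c$ a constant, the constraint $\sum_{l,\tau}\bR_{\tau l}\,\partial_{y_\tau}\psi_l = 0$ (the sum running over $l \in \{1,\dots,\bbln\}$ and $\tau \in \{1,\dots,\bblm\}$) holds trivially, so \eqref{eq:orthGR} gives $c \cdot \int_{Y^\bblm} w_0 = 0$ for all $c \in \RRn$, i.e.\ $\hat w_0 = 0$. Next, fix $k_0 \in \ZZ^\bblm\setminus\{0\}$ and $c \in \RRn$, and take the real-valued test functions $\psi(y) := c\cos(2\pi k_0\cdot y)$ and $\tilde\psi(y) := c\sin(2\pi k_0\cdot y)$ in $C^\infty_\#(Y^\bblm;\RRn)$. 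Since $(\bR^* k_0)_l = \sum_{\tau}\bR_{\tau l}(k_0)_\tau$, a short computation gives $\sum_{l,\tau}\bR_{\tau l}\,\partial_{y_\tau}\psi_l(y) = -2\pi\big((\bR^* k_0)\cdot c\big)\sin(2\pi k_0\cdot y)$ and $\sum_{l,\tau}\bR_{\tau l}\,\partial_{y_\tau}\tilde\psi_l(y) = 2\pi\big((\bR^* k_0)\cdot c\big)\cos(2\pi k_0\cdot y)$. Hence, whenever $c \perp \bR^* k_0$, both $\psi$ and $\tilde\psi$ are admissible in \eqref{eq:orthGR}, which yields $c\cdot a_{k_0} = 0$ and $c\cdot b_{k_0} = 0$, where $a_{k_0} := \int_{Y^\bblm} w_0(y)\cos(2\pi k_0\cdot y)\,\dy$ and $b_{k_0} := \int_{Y^\bblm} w_0(y)\sin(2\pi k_0\cdot y)\,\dy$.

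The key step is then to use \eqref{eq:Rcriterion}, which guarantees $\bR^* k_0 \neq 0$, so that the orthogonal complement $(\bR^* k_0)^\perp$ is a hyperplane in $\RRn$ and any vector orthogonal to all of it lies on the line $\spn_\RR\{\bR^* k_0\}$. Applied to $a_{k_0}$ and $b_{k_0}$, this gives $a_{k_0} = \alpha_{k_0}\,\bR^* k_0$ and $b_{k_0} = \beta_{k_0}\,\bR^* k_0$ for some $\alpha_{k_0}, \beta_{k_0} \in \RR$. Since $\hat w_{k_0} = a_{k_0} - i\,b_{k_0} = (\alpha_{k_0} - i\beta_{k_0})\bR^* k_0$, setting $\lambda_{k_0} := \alpha_{k_0} - i\beta_{k_0}$ for $k_0 \neq 0$ and $\lambda_0 := 0$ exhibits $w_0$ as an element of $\Gg_{\bR}^p$ in the sense of \eqref{eq:spaceGR}, completing the argument.

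I do not anticipate a real obstacle; the argument is bookkeeping on the Fourier side. The two points needing care are: keeping the test functions real-valued so that \eqref{eq:orthGR} applies literally, while recombining $a_{k_0}$ and $b_{k_0}$ into the single complex coefficient $\hat w_{k_0}$; and invoking \eqref{eq:Rcriterion} exactly where one needs $(\bR^* k_0)^\perp$ to be of codimension one, which is what pins $a_{k_0}$ and $b_{k_0}$ to the line through $\bR^* k_0$. One could instead feed complex exponentials $c\,e^{2\pi i k_0\cdot y}$, $c \in \Cc^\bbln$, into \eqref{eq:orthGR} via real and imaginary parts, but the trigonometric version keeps every test function manifestly admissible.
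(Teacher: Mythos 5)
Your proof is correct, but it takes a genuinely different route from the paper's. You argue directly on the Fourier side: testing \eqref{eq:orthGR} against the admissible trigonometric polynomials $c\cos(2\pi k_0\cdot y)$ and $c\sin(2\pi k_0\cdot y)$ with $c\perp\bR^*k_0$ (your computation of the constraint $\bR_{\tau l}\partial_{y_\tau}\psi_l=-2\pi((\bR^*k_0)\cdot c)\sin(2\pi k_0\cdot y)$ is right), you pin both $a_{k_0}$ and $b_{k_0}$ to the line $\spn_\RR\{\bR^*k_0\}$ and recombine them into $\hat w_{k_0}=\lambda_{k_0}\bR^*k_0$, which is precisely the defining condition of $\Gg_{\bR}^p$ in \eqref{eq:spaceGR}; the constant test function handles $\hat w_0=0$. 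The paper instead argues by contradiction via duality: if $w_0\notin\Gg_{\bR}^p$, then since $\Gg_{\bR}^p$ is closed in $L^p_\#(Y^\bblm;\RRn)$ (Lemma~\ref{lem:GRclosed}), a Hahn--Banach separation produces $v\in L^{p'}_\#(Y^\bblm;\RRn)$ annihilating $\Gg_{\bR}^p$ but not $w_0$; testing against $w=\bR^*\grad_y\phi\in\Gg_{\bR}^p$ shows $v$ satisfies the constraint $\partial_{y_\tau}v_l\,\bR_{\tau l}=0$ distributionally, and a periodic mollification turns $v$ into admissible smooth test functions $v_h$, so that \eqref{eq:orthGR} forces $\int_{Y^\bblm}w_0\cdot v=0$, a contradiction. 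Your approach is more elementary and self-contained --- it bypasses Hahn--Banach, the closedness of $\Gg_{\bR}^p$, and the mollification step, using only that trigonometric polynomials are admissible; the paper's functional-analytic argument is less computation-specific and, as a by-product, shows that the smooth constrained test functions are dense (in $L^{p'}$) in the full annihilator of $\Gg_{\bR}^p$, which is the structural fact underlying Remark~\ref{rmk:relGRAR*}. Both invocations of \eqref{eq:Rcriterion} occur at the same logical spot: you need $\bR^*k_0\neq0$ so that $(\bR^*k_0)^\perp$ has codimension one, just as Lemma~\ref{lem:GRclosed} needs it to make sense of the coefficients $\lambda_k$.
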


\begin{proof} By contradiction, assume that \(w_0\not\in \Gg_{\bR}^p\). Then, using  Lemma~\ref{lem:GRclosed}, together with (a corollary
to) the Hahn--Banach theorem (see, for instance, \cite[Cor.~I.8]{Br83}), there exists \(v \in L^{p'}_\#(Y^\bblm;
\RRn)\)  such that
\begin{equation}
\label{eq:orthcd1}
\begin{aligned}
\int_{Y^\bblm}v(y) \cdot w(y)\,\dy =0
\end{aligned}
\end{equation}
 for all \(w\in \Gg_{\bR}^p\), and
\begin{equation}
\label{eq:orthcd2}
\begin{aligned}
\int_{Y^\bblm}v(y) \cdot w_0(y)\,\dy \not=0.
\end{aligned}
\end{equation}

We claim that \(\frac{\partial
v_l}{\partial y_\tau}\bR_{\tau l}=0\) in the sense of distributions. In fact, let \(\phi \in C^\infty_\#(Y^\bblm)\), and set \(w:=\bR^* \grad_y \phi\). Then, \(w \in L^p_\#(Y^\bblm; \RRn)\), \(\hat w_0 =0\), and \(\hat w_k = 2\pi i\hat \phi_k \bR^*k\) for all \(k\in\ZZ^\bblm \backslash\{0\}\). Thus, \(w\in \Gg_{\bR}^p\) and so, by \eqref{eq:orthcd1},
\begin{equation*}
\begin{aligned}
0 = \int_{Y^\bblm}v(y) \cdot w(y)\,\dy = \int_{Y^\bblm} \bR v(y)  \cdot \grad _y \phi(y)\,\dy,
\end{aligned}
\end{equation*}
which shows that \(0=\div_y (\bR v(y))=\div_y ( v(y)\bR^*) = \frac{\partial
v_l}{\partial y_\tau}\bR_{\tau l} \) in the sense of distributions because \(\phi \in C^\infty_\#(Y^\bblm)\) is arbitrary.

Using standard mollification techniques with a \(Y^\bblm\)-periodic, smooth  kernel, we may construct a sequence \(\{v_h\}_{h\in \NN} \subset 
C^\infty_\#(Y^\bblm;\RRn)\) such that  \(\div_y ( v_h(y)\bR^*)=0\) in \( Y^\bblm\) and \(\lim_{h\to\infty } \Vert v_h - v\Vert_{L^{p'}(Y^\bblm;\RRn)} =0\). Then, by \eqref{eq:orthGR} with \(\psi = v_h\) and Lebesgue dominated convergence theorem, we obtain \(\int_{Y^\bblm}w_0(y) \cdot v(y)\,\dy =0\), which contradicts \eqref{eq:orthcd2}. Thus, \(w_0\in \Gg_{\bR}^p\). \end{proof}

\begin{proposition}\label{prop:gradR2sc}
Let   \(\{u_\epsi\}_{\epsi} \subset
W^{1,p}(\Omega)\) be a bounded sequence, and assume that \(\bR\) satisfies \eqref{eq:Rcriterion}.
Then, there exist a subsequence \(\epsi'\preceq \epsi\) and   functions
\(u\in W^{1,p}(\Omega)\) and \(w\in L^p(\Omega; \Gg_{\bR}^p)\) such that
\begin{equation*}
\begin{aligned}
u_{\epsi'}\Rsc{\bR\text{-}2sc} u  \quad \text{and} \quad  
 \grad u_{\epsi'}\Rsc{\bR\text{-}2sc} \grad u + w.
\end{aligned}
\end{equation*}
\end{proposition}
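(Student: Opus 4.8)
The plan is to run the classical two-scale/div--curl scheme, absorbing the cut-and-project distortion of the chain rule into the Diophantine condition \eqref{eq:Rcriterion} and the Fourier material of this section. Since $\{u_\epsi\}$ is bounded in $W^{1,p}(\Omega)$ with $1<p<\infty$, by reflexivity I first extract a subsequence along which $u_\epsi\weakly u$ in $W^{1,p}(\Omega)$; then, applying Proposition~\ref{prop:compactenessR2sc} to $\{u_\epsi\}$ and to $\{\grad u_\epsi\}$, I pass to a further subsequence $\epsi'\preceq\epsi$ with $u_{\epsi'}\Rsc{\bR\text{-}2sc}u_0$ and $\grad u_{\epsi'}\Rsc{\bR\text{-}2sc}U_0$ for some $u_0\in L^p(\Omega\times Y^\bblm)$ and $U_0\in L^p(\Omega\times Y^\bblm;\RRn)$. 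By Proposition~\ref{prop:R2sc-wc} and uniqueness of weak $L^p$ limits, $\fint_{Y^\bblm}u_0(\cdot,y)\,\dy=u$ and $\fint_{Y^\bblm}U_0(\cdot,y)\,\dy=\grad u$.

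\emph{Step~1: $u_0$ does not depend on $y$.} Fix $\phi\in C^\infty_c(\Omega)$, $\psi\in C^\infty_\#(Y^\bblm;\RRn)$, and write $(\bR\psi)_m:=\sum_{l=1}^{\bbln}\bR_{ml}\psi_l$. Testing $\grad u_{\epsi'}$ against $x\mapsto\epsi'\phi(x)\psi(\bR x/\epsi')$, the left side is $O(\epsi')$, while integration by parts together with the chain-rule identity $\div_x\big(\psi(\bR x/\epsi)\big)=\tfrac1\epsi(\div_y(\bR\psi))(\bR x/\epsi)$ shows that the right side equals, up to an $O(\epsi')$ term, $-\int_\Omega u_{\epsi'}(x)\,\phi(x)\,(\div_y(\bR\psi))(\bR x/\epsi')\,\dx$. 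By the $\bR$-two-scale convergence of $u_{\epsi'}$ against the admissible test function $(x,y)\mapsto\phi(x)(\div_y(\bR\psi))(y)$, this converges to $-\int_\Omega\fint_{Y^\bblm}u_0(x,y)\,\phi(x)\,(\div_y(\bR\psi))(y)\,\dy\,\dx$; since $\phi$ is arbitrary, $\fint_{Y^\bblm}u_0(x,y)\,(\div_y(\bR\psi))(y)\,\dy=0$ for a.e.\ $x$ and every $\psi$. Choosing $\psi(y)=\xi e^{2\pi i k\cdot y}$ gives $\div_y(\bR\psi)(y)=2\pi i\,((\bR^*k)\cdot\xi)\,e^{2\pi i k\cdot y}$; taking $\xi=\bR^*k$ and using \eqref{eq:Rcriterion} ($\bR^*k\ne0$ for $k\ne0$), every Fourier coefficient of $u_0(x,\cdot)$ of index $k\ne0$ vanishes, whence $u_0(x,y)=\fint_{Y^\bblm}u_0(x,\cdot)\,\dy=u(x)$ by Theorem~\ref{thm:Grafakos}. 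In particular $u_{\epsi'}\Rsc{\bR\text{-}2sc}u$.

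\emph{Step~2: decomposition of $U_0$.} Fix $\phi\in C^\infty_c(\Omega)$ and $\psi\in C^\infty_\#(Y^\bblm;\RRn)$ with $\div_y(\bR\psi)=0$, and test $\grad u_{\epsi'}$ against $x\mapsto\phi(x)\psi(\bR x/\epsi')$. The left side converges to $\int_\Omega\fint_{Y^\bblm}U_0(x,y)\cdot\phi(x)\psi(y)\,\dy\,\dx$; integrating by parts, the $\div_x$-term drops since $\div_y(\bR\psi)\equiv0$, so the right side equals $-\int_\Omega u_{\epsi'}(x)\,\grad\phi(x)\cdot\psi(\bR x/\epsi')\,\dx$, which — by the $\bR$-two-scale convergence of $u_{\epsi'}$ and Step~1 — tends to $-\int_\Omega u(x)\,\grad\phi(x)\cdot\big(\fint_{Y^\bblm}\psi(y)\,\dy\big)\,\dx=\int_\Omega\phi(x)\,\grad u(x)\cdot\big(\fint_{Y^\bblm}\psi(y)\,\dy\big)\,\dx$, the last step by integration by parts in $x$ ($\phi$ has compact support, $u\in W^{1,p}$). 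As $\phi$ is arbitrary, $\fint_{Y^\bblm}\big(U_0(x,y)-\grad u(x)\big)\cdot\psi(y)\,\dy=0$ for a.e.\ $x$ and all such $\psi$; since the hypothesis $\partial_{y_\tau}\psi_l\,\bR_{\tau l}=0$ in Lemma~\ref{lem:orthGR} is exactly $\div_y(\bR\psi)=0$, that lemma yields $w(x,\cdot):=U_0(x,\cdot)-\grad u(x)\in\Gg_{\bR}^p$ for a.e.\ $x$. As $w\in L^p(\Omega;L^p_\#(Y^\bblm;\RRn))$ and $\Gg_{\bR}^p$ is a closed subspace of $L^p_\#(Y^\bblm;\RRn)$ by Lemma~\ref{lem:GRclosed}, we get $w\in L^p(\Omega;\Gg_{\bR}^p)$, and $\grad u_{\epsi'}\Rsc{\bR\text{-}2sc}\grad u+w$, which is the assertion.

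\emph{On the obstacles.} The only genuinely new point, compared with the periodic two-scale limit of gradients, is that the chain rule produces $\div_y(\bR\psi)$ rather than $\div_y\psi$: in Step~1 this forces testing against oscillating fields valued in the range of $\bR$, and \eqref{eq:Rcriterion} is precisely what guarantees these still detect every nonzero frequency (cf.\ Remark~\ref{rmk:onRcriterion}), while in Step~2 it is what makes ``divergence-free with respect to $\bR$'' the correct constraint on the test fields, matching Lemma~\ref{lem:orthGR}. A routine technical point, handled as in the proof of Proposition~\ref{prop:necR2sc}, is that each ``for a.e.\ $x$'' conclusion should first be obtained for a countable family of test pairs $(\phi,\psi)$, trigonometric in $y$, that is dense among the admissible ones, and then extended by density using separability of the relevant spaces. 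Finally, Step~2 could alternatively be deduced from Theorem~\ref{thm:main2} applied to the $\Aa=\mathrm{curl}$-free sequence $\{\grad u_{\epsi'}\}$ together with the identification, recalled in Remark~\ref{rmk:onAR*}, of $\Aa_{\bR^*}$-free periodic fields with $\Gg_{\bR}^p$ in the curl case; the self-contained Fourier argument above is preferred here since it relies only on the results of the present section.
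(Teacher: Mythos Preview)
Your proof is correct and Step~2 coincides with the paper's argument (same integration by parts against test fields with $\div_y(\bR\psi)=0$, same appeal to Lemma~\ref{lem:orthGR}). The genuine difference is Step~1: the paper identifies $u_0\equiv u$ in one line via the Rellich--Kondrachov compact embedding $W^{1,p}(\Omega)\hookrightarrow L^p(\Omega)$, obtaining $u_{\epsi'}\to u$ strongly in $L^p$ and then invoking Proposition~\ref{prop:R2scSc}, whereas you run a direct oscillatory-test-function argument (testing $\grad u_{\epsi'}$ against $\epsi'\phi\,\psi(\bR\cdot/\epsi')$) and use~\eqref{eq:Rcriterion} to kill the nonzero Fourier modes of $u_0(x,\cdot)$. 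The paper's route is shorter and avoids any Fourier analysis for this step; your route is more intrinsic to the two-scale machinery, does not use compact embedding (so it would survive in settings where Rellich--Kondrachov is unavailable, e.g.\ $\Omega$ unbounded), and makes the role of~\eqref{eq:Rcriterion} explicit already at the level of $u_0$. Both arrive at the same characterization $U_0=\grad u+w$ with $w\in L^p(\Omega;\Gg_{\bR}^p)$.
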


\begin{proof} By the reflexibility of \(W^{1,p}(\Omega)\) and 
Proposition~\ref{prop:compactenessR2sc}, there exist \(u\in W^{1,p}(\Omega)\), \(u_0\in L^p(\Omega \times Y^\bblm)\), and \(U_0 \in L^p(\Omega\times Y^\bblm;\RRn)\) such that, extracting a subsequence if necessary, 
\begin{equation}\label{eq:bycompacteness}
\begin{aligned}
u_{\epsi} \weakly u \text{ weakly in } W^{1,p}(\Omega),\enspace  
u_{\epsi}\Rsc{\bR\text{-}2sc} u_0, \enspace   \text{and } \enspace   \grad u_{\epsi}\Rsc{\bR\text{-}2sc}
U_0.
\end{aligned}
\end{equation}

By the Rellich--Kondrachov theorem, \(u_\epsi \to u\) in \(L^p(\Omega)\). Hence,  Proposition~\ref{prop:R2scSc} and the uniqueness of the
\(\bR\)-two-scale limit (see Remark~\ref{rmk:uniq}) yield
\begin{equation*}
\begin{aligned}
u\equiv u_0.
\end{aligned}
\end{equation*}

We are left to prove that \(U_0\) in \eqref{eq:bycompacteness}
is of the form \(U_0 (x,y)= \grad u(x) + w(x,y)\) for some  \(w\in
L^p(\Omega; \Gg_{\bR})\).  Let  \(\Phi
\in C_c^\infty(\Omega;
C_\#^\infty(Y^\bblm;\RRn))\) be such that \(\frac{\partial
\Phi_l}{\partial y_\tau}\bR_{\tau l}=0\) in \(\Omega\times Y^\bblm\), 
  \(l\in\{1, ...,\bbln\}\), \(\tau\in\{1, ..., \bblm\} \). Then,
using integration by parts, the second
convergence in \eqref{eq:bycompacteness}, and  
the fact that   \(u_0\equiv
u\in W^{1,p}(\Omega)\), we obtain
\begin{equation*}
\begin{aligned}
&\int_{\Omega\times Y^\bblm} U_0(x,y) \cdot \Phi (x,y)\,\dx\dy \\&\quad=
\lim_{\epsi\to0^+} \int_\Omega \grad u_\epsi(x)\cdot  \Phi\Big(x,
\frac{\bR x}{\epsi}\Big)\,\dx\\
& \quad= -\lim_{\epsi\to0^+} \bigg(\int_\Omega u_\epsi(x) \div_x \Phi\Big(x,
\frac{\bR x}{\epsi}\Big)\,\dx + \frac1\epsi \int_\Omega u_\epsi(x)
\frac{\partial \Phi_l}{\partial y_\tau}\Big(x, 
\frac{\bR x}{\epsi}\Big)\bR_{\tau  l}\,\dx\bigg)\\
&\quad= -\lim_{\epsi\to0^+} \int_\Omega u_\epsi(x) \div_x \Phi\Big(x,
\frac{\bR x}{\epsi}\Big)\,\dx  =- \int_{\Omega\times Y^\bblm} u(x)
\div_x\Phi(x,y)\,\dx\dy   \\
&\quad= \int_{\Omega\times Y^\bblm} \grad u(x) \cdot \Phi (x,y)\,\dx\dy.
\end{aligned}
\end{equation*}
Hence,
\begin{equation*}
\begin{aligned}
\int_{\Omega\times Y^\bblm} \big(U_0(x,y) - \grad u(x)\big) \cdot
\Phi (x,y)\,\dx\dy =0
\end{aligned}
\end{equation*}
for all   \(\Phi
\in C_c^\infty(\Omega;
C_\#^\infty(Y^\bblm;\RRn))\)  such that \(\frac{\partial
\Phi_l}{\partial y_\tau}\bR_{\tau l}=0\) in \(\Omega\times Y^\bblm\).
Invoking Lemma~\ref{lem:orthGR}, we conclude for \aev-\(x\in\Omega\),
\(U_0(x,\cdot) - \grad u(x) \in \Gg_{\bR}^p\), which, together
with the fact that the map \((x,y) \mapsto U_0(x,y) - \grad u(x)\)
belongs to \(L^p(\Omega\times Y^\bblm;\RRn)\), concludes the proof.
\end{proof}

The next proposition shows that Proposition~\ref{prop:gradR2sc} fully
characterizes the \(\bR\)-two-scale limit of bounded sequences in \(W^{1,p}(\Omega)\). To the best of our knowledge,  in the
framework of \(\bR\)-two-scale convergence, this result is new in the literature even for \(p=2\).

\begin{proposition}\label{prop:reverseR2sc}
Let \(u\in W^{1,p}(\Omega)\) and \(w\in L^p(\Omega; \Gg_{\bR}^p)\), and assume that \(\bR\) satisfies
\eqref{eq:Rcriterion}. Then, there exists a bounded sequence   \(\{u_\epsi\}_{\epsi} \subset
W^{1,p}(\Omega)\)  such that
\begin{equation*}
\begin{aligned}
u_{\epsi}\Rsc{\bR\text{-}2sc} u  \quad \text{and} \quad  
 \grad u_{\epsi}\Rsc{\bR\text{-}2sc} \grad u + w.
\end{aligned}
\end{equation*}  
\end{proposition}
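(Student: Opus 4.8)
The plan is to build the sequence explicitly when $w$ has a finite Fourier expansion with smooth coefficients, and then to reach the general case by density together with a diagonalization argument; throughout, $u$ contributes only the fixed, strongly convergent sequence $\grad u$ to the gradient, so it is the oscillatory part that has to be produced.

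\emph{Smooth case.} Suppose first that $w(x,y)=\sum_{k\in F}\lambda_k(x)\,\bR^*k\,e^{2\pi i k\cdot y}$, where $F\subset\ZZ^\bblm\setminus\{0\}$ is finite, $\lambda_k\in C^\infty_c(\RRn;\Cc)$, and $\lambda_{-k}=-\overline{\lambda_k}$ for all $k$ (this last condition forces $w$ to be $\RRn$-valued, and every such $w$ satisfies $w(x,\cdot)\in\Gg_{\bR}^p$ for every $x$). Set $\Phi(x,y):=\sum_{k\in F}\tfrac{\lambda_k(x)}{2\pi i}e^{2\pi i k\cdot y}$, which is real-valued because of the constraint on the $\lambda_k$, and which is $C^\infty$ in $x$ and $Y^\bblm$-periodic and $C^\infty$ in $y$; then define $u_\epsi(x):=u(x)+\epsi\,\Phi\big(x,\tfrac{\bR x}{\epsi}\big)$, so that $u_\epsi\in W^{1,p}(\Omega)$. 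The key point is the identity
\[
\sum_{m=1}^\bblm\frac{\partial\Phi}{\partial y_m}(x,y)\,\bR_{mi}=\sum_{k\in F}\lambda_k(x)\,(\bR^*k)_i\,e^{2\pi i k\cdot y}=w_i(x,y),
\]
in which the defining relation $\hat w_k=\lambda_k\bR^*k$ of $\Gg_{\bR}^p$ (and, for the coefficients $\lambda_k$ below, the criterion \eqref{eq:Rcriterion}) is exploited; combined with the chain rule it gives
\[
\grad u_\epsi(x)=\grad u(x)+w\Big(x,\tfrac{\bR x}{\epsi}\Big)+\epsi\,(\grad_x\Phi)\Big(x,\tfrac{\bR x}{\epsi}\Big).
\]
Since $\Phi$ and $\grad_x\Phi$ are bounded on $\RRn\times\RRm$ and $\Omega$ is bounded, $u_\epsi\to u$ in $L^p(\Omega)$ and $\grad u_\epsi-\grad u-w(\cdot,\tfrac{\bR\cdot}{\epsi})\to0$ in $L^p(\Omega;\RRn)$; Proposition~\ref{prop:Radmissible} applied to $|w|^p$ then shows $\{u_\epsi\}$ is bounded in $W^{1,p}(\Omega)$, with $\limsup_{\epsi\to0^+}\|u_\epsi\|_{W^{1,p}(\Omega)}\leq\|u\|_{W^{1,p}(\Omega)}+\|w\|_{L^p(\Omega\times Y^\bblm;\RRn)}$. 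Finally, $u_\epsi\Rsc{\bR\text{-}2sc}u$ by Proposition~\ref{prop:R2scSc}, $w(\cdot,\tfrac{\bR\cdot}{\epsi})\Rsc{\bR\text{-}2sc}w$ by Corollary~\ref{cor:R2sc} (note $w\in C(\overline\Omega;C_\#(Y^\bblm;\RRn))$), and the constant sequence $\grad u$ and the error term $\bR$-two-scale converge to $\grad u$ and $0$ by Proposition~\ref{prop:R2scSc}; by linearity of the $\bR$-two-scale limit, $\grad u_\epsi\Rsc{\bR\text{-}2sc}\grad u+w$.

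\emph{General case and conclusion.} For arbitrary $w\in L^p(\Omega;\Gg_{\bR}^p)$ write $\hat w_k(x)=\lambda_k(x)\bR^*k$ with $\lambda_k(x):=|\bR^*k|^{-2}\,\hat w_k(x)\cdot\bR^*k\in L^p(\Omega;\Cc)$ (well defined by \eqref{eq:Rcriterion}); since $w$ is real one automatically has $\lambda_{-k}=-\overline{\lambda_k}$, and $\hat w_0\equiv0$ because $w(x,\cdot)\in\Gg_{\bR}^p$. By Remark~\ref{rmk:onGR2} the square partial sums $w_N(x,y):=\sum_{0<|k|_\infty\leq N}\hat w_k(x)e^{2\pi i k\cdot y}$ converge to $w$ in $L^p(\Omega\times Y^\bblm;\RRn)$ and satisfy $w_N(x,\cdot)\in\Gg_{\bR}^p$; approximating each $\lambda_k$, for $k$ in a fixed set of representatives of the pairs $\{k,-k\}$ with $0<|k|_\infty\leq N$, in $L^p(\Omega)$ by functions in $C^\infty_c(\RRn;\Cc)$ and extending by $\lambda_{-k}:=-\overline{\lambda_k}$, one obtains functions of the smooth-case form converging to $w_N$ in $L^p(\Omega\times Y^\bblm;\RRn)$. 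Hence there is a sequence $\{w^{(j)}\}_j$ of smooth-case functions with $w^{(j)}\to w$ in $L^p(\Omega\times Y^\bblm;\RRn)$; for each $j$ let $\{u^{(j)}_\epsi\}_\epsi$ be the sequence built above for $w^{(j)}$, so that $u^{(j)}_\epsi\Rsc{\bR\text{-}2sc}u$, $\grad u^{(j)}_\epsi\Rsc{\bR\text{-}2sc}\grad u+w^{(j)}$, and $\limsup_{\epsi\to0^+}\|u^{(j)}_\epsi\|_{W^{1,p}(\Omega)}\leq\|u\|_{W^{1,p}(\Omega)}+\|w^{(j)}\|_{L^p(\Omega\times Y^\bblm;\RRn)}$, the right-hand side bounded uniformly in $j$. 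Since $\bR$-two-scale convergence of $L^p(\Omega;\RRn)$-bounded sequences (and of their gradients) is metrizable — the pairing in Definition~\ref{def:R2sc} is against the separable space $L^{p'}(\Omega;C_\#(Y^\bblm;\cdot))$, cf.\ the proof of Proposition~\ref{prop:compactenessR2sc} — a diagonalization as in \cite[proof of Proposition~1.11]{FeFo120} yields $\epsi\mapsto j_\epsi$ with $j_\epsi\to\infty$ as $\epsi\to0^+$ such that $u_\epsi:=u^{(j_\epsi)}_\epsi$ is bounded in $W^{1,p}(\Omega)$ and satisfies $u_\epsi\Rsc{\bR\text{-}2sc}u$ and $\grad u_\epsi\Rsc{\bR\text{-}2sc}\grad u+w$, which is the assertion.

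The one genuinely delicate step is this last diagonalization: $j_\epsi$ must grow slowly enough that the $O(\epsi)$ error terms — whose sizes involve the $C^1$-norms of the approximating coefficients — still vanish while the two-scale limits converge; the explicit smooth-case construction together with the uniform $W^{1,p}$ bound above makes this routine. Everything else reduces to the chain-rule identity, which is exactly the point at which the structure of $\Gg_{\bR}^p$ is used.
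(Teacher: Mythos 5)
Your proof is correct and follows essentially the same route as the paper's: the corrector $u+\epsi\,\Phi(x,\bR x/\epsi)$ with $\bR^*\grad_y\Phi=w$ built from the Fourier coefficients $\hat w_k=\lambda_k\bR^*k$, the $L^p$ convergence of the square partial sums (Remark~\ref{rmk:onGR2}), and the diagonalization of \cite[Prop.~1.11]{FeFo120} are exactly the ingredients used there. The only difference is organizational -- you take finite trigonometric polynomials with $C^\infty_c$ coefficients as the reduced class and perform a single diagonalization, whereas the paper first passes through $W^{1,p}(\Omega;\Gg^p_{\bR})$ and diagonalizes twice -- which slightly streamlines, but does not change, the argument.
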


\begin{proof}

We first consider the case in which \(w\in W^{1,p}(\Omega; \Gg_{\bR}^p)\).

For each \(k\in \ZZ^\bblm\), define
\begin{equation*}
\begin{aligned}
\hat w_k(x):= \int_{Y^\bblm} w(x,y)e^{-2\pi i k \cdot y} \,\dy, \enspace x\in\Omega.
\end{aligned}
\end{equation*}
Because  \(w\in W^{1,p}(\Omega; \Gg_{\bR}^p)\), we have \(\hat w_k \in W^{1,p}(\Omega;\Cc^n)\) with \(\hat w_0\equiv0\); moreover, for \aev-\(x\in\Omega\), we have \(\hat w_k(x)=\lambda_k(x)\bR^*k\) for some \(\lambda_k\in W^{1,p}(\Omega;\Cc)\) with \(\lambda_0\equiv0\). 

For each \(N\in \NN\), let \(\tilde w_N \in W^{1,p}(\Omega; C_\#^\infty(Y^\bblm;\Cc))\) be the function defined by
\begin{equation*}
\begin{aligned}
\tilde w_N(x,y):= \sum_{k\in\ZZ^\bblm \atop |k|_\infty\leq N } \frac{1}{2\pi i}\lambda_k(x) e^{2\pi i k \cdot y}, \enspace (x,y) \in\Omega \times \RRm.
\end{aligned}
\end{equation*}
Note that  the function
\begin{equation*}
\begin{aligned}
 w_N(x,y):=\bR^* \grad_y \tilde w_N(x,y) = \sum_{k\in\ZZ^\bblm \atop |k|_\infty\leq N } \lambda_k(x) \bR^*k e^{2\pi i k \cdot y} = \sum_{k\in\ZZ^\bblm \atop |k|_\infty\leq N } \hat w_k(x) e^{2\pi i k \cdot y}, \enspace (x,y) \in\Omega
\times \RRm,
\end{aligned}
\end{equation*}
belongs to \( W^{1,p}(\Omega; C_\#^\infty(Y^\bblm;\RRn))\) and, by Remark~\ref{rmk:onGR2}, satisfies
\begin{equation}
\label{eq:convwN}
\begin{aligned}
\lim_{N\to\infty} \int_{\Omega\times Y^\bblm} |w_N(x,y)- w(x,y) |^p\,\dx\dy.
\end{aligned}
\end{equation}
Finally, for each \(\epsi\), we define \(\bar w_N:= {Re} (\tilde
w_N)\) and
\begin{equation*}
\begin{aligned}
u_{\epsi,N}(x):= u(x) + \epsi \bar w_N \Big(x, 
\frac{\bR x}{\epsi}\Big), \enspace x\in\Omega.
\end{aligned}
\end{equation*}
Then, \(u_{\epsi,N} \in W^{1,p}(\Omega)\) with
\begin{equation*}
\begin{aligned}
\Vert u_{\epsi,N} \Vert_{L^p(\Omega)} \leq \Vert u \Vert_{L^p(\Omega)} + \epsi \Vert\bar  w_N \Vert_{L^p(\Omega;C_\#(Y^\bblm))}
\end{aligned}
\end{equation*}
and
\begin{equation*}
\begin{aligned}
\Vert \grad u_{\epsi,N} \Vert_{L^p(\Omega;\RRn)} \leq \Vert\grad  u \Vert_{L^p(\Omega;\RRn)}
+ \epsi \Vert  \grad_x \bar w_N \Vert_{L^p(\Omega;C_\#(Y^\bblm;\RRn))} + \Vert  w_N \Vert_{L^p(\Omega;C_\#(Y^\bblm;\RRn))}.
\end{aligned}
\end{equation*}

Let \(\ffi \in L^{p'}(\Omega;C_\#(Y^\bblm))\) and 
\(\Phi \in L^{p'}(\Omega;C_\#(Y^\bblm;\RRn))\). 
By Proposition~\ref{prop:Radmissible}, we have
\begin{equation}
\label{eq:R2scuepsiN}
\begin{aligned}
\lim_{N\to\infty} \lim_{\epsi\to0^+} \int_\Omega u_{\epsi,N}(x) \ffi \Big(x, 
\frac{\bR x}{\epsi}\Big)\,\dx &= \lim_{N\to\infty} \lim_{\epsi\to0^+} \int_\Omega \Big( u(x) + \epsi \bar w_N \Big(x, 
\frac{\bR x}{\epsi}\Big) \Big)
\ffi \Big(x, 
\frac{\bR x}{\epsi}\Big)\,\dx\\ & = \int_{\Omega\times Y^\bblm}
 u(x) \ffi(x,y) \,\dx\dy
\end{aligned}
\end{equation}
and, also using  \eqref{eq:convwN},
\begin{equation}
\label{eq:R2scgraduepsiN}
\begin{aligned}
&\lim_{N\to\infty} \lim_{\epsi\to0^+} \int_\Omega \grad u_{\epsi,N}(x)
\cdot \Phi \Big(x, 
\frac{\bR x}{\epsi}\Big)\,\dx\\
&\quad =\lim_{N\to\infty} \lim_{\epsi\to0^+} \int_\Omega \Big(\grad u(x) +\epsi \grad _x \bar w \Big(x,  \frac{\bR x}{\epsi}\Big) + w_N  \Big(x,  \frac{\bR x}{\epsi}\Big) \Big)
\cdot \Phi \Big(x, 
\frac{\bR x}{\epsi}\Big)\,\dx\\
& \quad = \int_{\Omega\times Y^\bblm}
(\grad  u(x) + w(x,y)) \cdot \Phi(x,y) \,\dx\dy. 
\end{aligned}
\end{equation}

Due to the separability of \(L^{p'}(\Omega;C_\#(Y^\bblm))\) and \(L^{p'}(\Omega;C_\#(Y^\bblm;\RRn))  \)  and \eqref{eq:R2scuepsiN}--\eqref{eq:R2scgraduepsiN}, we can proceed as in \cite[proof of Prop.~1.11 (p.449)]{FeFo120} to find a sequence \(\{N_\epsi\}_\epsi\) such that \(N_\epsi\to\infty\) as \(\epsi \to 0^+\) and \(\tilde u_\epsi:= u_{\epsi, N_\epsi} \in W^{1,p}(\Omega) \)  satisfies
\begin{equation*}
\begin{aligned}
 \lim_{\epsi\to0^+} \int_\Omega \tilde u_\epsi(x)
\ffi \Big(x, 
\frac{\bR x}{\epsi}\Big)\,\dx = \int_{\Omega\times Y^\bblm}
 u(x) \ffi(x,y) \,\dx\dy
\end{aligned}
\end{equation*}
and 
\begin{equation*}
\begin{aligned}
 \lim_{\epsi\to0^+} \int_\Omega \grad\tilde u_\epsi(x)
\cdot \Phi \Big(x, 
\frac{\bR x}{\epsi}\Big)\,\dx = \int_{\Omega\times Y^\bblm}
(\grad  u(x) + w(x,y)) \cdot \Phi(x,y) \,\dx\dy 
\end{aligned}
\end{equation*}
for all  \(\ffi \in L^{p'}(\Omega;C_\#(Y^\bblm))\) and 
\(\Phi \in L^{p'}(\Omega;C_\#(Y^\bblm;\RRn))\); that is,
\begin{equation*}
\begin{aligned}
\tilde u_{\epsi}\Rsc{\bR\text{-}2sc} u  \quad \text{and} \quad  
 \grad \tilde u_{\epsi}\Rsc{\bR\text{-}2sc} \grad u + w.
\end{aligned}
\end{equation*}  
The boundedness of \(\tilde u_\epsi \) in \(W^{1,p}(\Omega)\) follows from Proposition~\ref{prop:R2sc-wc}.

To conclude, we treat the general  case in which \(w\in L^p(\Omega; \Gg_{\bR}^p)\). We claim that there exists a sequence \(\{\bar w_N\}_{N\in\NN} \subset W^{1,p}(\Omega;
\Gg_{\bR}^p)\) such that
\begin{equation}\label{eq:W1,papprox}
\begin{aligned}
\lim_{N\to\infty} \Vert \bar w_N - w \Vert_{L^p(\Omega\times Y^\bblm;\RRn)} = 0.
\end{aligned}
\end{equation}

Assume that the claim holds. Then, by the previous case, for each \(N\in\NN\), there exists a bounded sequence \(\{u_\epsi^N\}_\epsi \subset W^{1,p}(\Omega)\) such that
\begin{equation}\label{eq:R2scapprW1p}
\begin{aligned}
u_{\epsi}^N \Rsc{\bR\text{-}2sc} u  \quad \text{and} \quad  
 \grad u_{\epsi}^N\Rsc{\bR\text{-}2sc} \grad u + \bar w_N
\end{aligned}
\end{equation}
as \(\epsi\to0\).

Let \(\ffi \in L^{p'}(\Omega;C_\#(Y^\bblm))\) and 
\(\Phi \in L^{p'}(\Omega;C_\#(Y^\bblm;\RRn))\). Using \eqref{eq:R2scapprW1p} first, and then \eqref{eq:W1,papprox}, we obtain
\begin{equation*}
\begin{aligned}
\lim_{N\to\infty} \lim_{\epsi\to0^+} \int_\Omega u_{\epsi}^N(x)
\ffi \Big(x, 
\frac{\bR x}{\epsi}\Big)\,\dx = \int_{\Omega\times Y^\bblm}
 u(x) \ffi(x,y) \,\dx\dy
\end{aligned}
\end{equation*}
and 
\begin{equation*}
\begin{aligned}
\lim_{N\to\infty} \lim_{\epsi\to0^+} \int_\Omega \grad u_{\epsi}^N(x)
\cdot \Phi \Big(x, 
\frac{\bR x}{\epsi}\Big)\,\dx &=\lim_{N\to\infty} \int_{\Omega\times Y^\bblm}
(\grad  u(x) + \bar w_N(x,y)) \cdot \Phi(x,y) \,\dx\dy\\
& = \int_{\Omega\times Y^\bblm}
(\grad  u(x) + w(x,y)) \cdot \Phi(x,y) \,\dx\dy.
\end{aligned}
\end{equation*}
Finally, arguing as in the previous case, we can find a 
 sequence \(\{N_\epsi\}_\epsi\) such that \(N_\epsi\to\infty\)
as \(\epsi \to 0^+\) and \(\tilde u_\epsi:= u_{\epsi}^{N_\epsi}
\in W^{1,p}(\Omega) \)  satisfies the requirements.

We are left to prove \eqref{eq:W1,papprox}. As before, for each \(k\in \ZZ^\bblm\), define
\begin{equation*}
\begin{aligned}
\hat w_k(x):= \int_{Y^\bblm} w(x,y)e^{-2\pi i k \cdot y} \,\dy, \enspace
x\in\Omega.
\end{aligned}
\end{equation*}
Because  \(w\in L^p(\Omega; \Gg_{\bR}^p)\), we have \(\hat
w_k \in L^p(\Omega;\Cc^\bbln)\) with \(\hat w_0\equiv0\); moreover,
for \aev-\(x\in\Omega\), we have \(\hat w_k(x)=\lambda_k(x)\bR^*k\)
for some \(\lambda_k\in L^p(\Omega;\Cc)\) with \(\lambda_0\equiv0\).
Then, for each \(k\in\ZZ^\bblm\), we can find a sequence 
\(\{\lambda_k^j\}_{j\in\NN} \subset W^{1,p}(\Omega;\Cc)\), with \(\lambda_0^j =0 \), such that \(\lambda_k^j \to \lambda_k\) in \(L^p(\Omega;\Cc)\) as \(j\to \infty\). In particular, we have
\begin{equation}\label{eq:appoxlambdas}
\begin{aligned}
\lim_{j\to\infty}\bigg(\int_\Omega |\lambda_k^j (x)\bR^*k - \lambda_k (x)\bR^*k|^p\,\dx\bigg)^{\frac{1}{p}}= \lim_{j\to\infty}|\bR^*k|\bigg(\int_\Omega |\lambda_k^j  (x)- \lambda_k
(x)|^p\,\dx\bigg)^{\frac{1}{p}}=0.
\end{aligned}
\end{equation}
Fix \(N\in\NN\); by \eqref{eq:appoxlambdas}, there exits \(j_N\in\NN\) such that
\begin{equation}\label{eq:goodjn}
\begin{aligned}
\sum_{k\in\ZZ^\bblm \atop |k|_\infty \leq N} \bigg(\int_\Omega |\lambda_k^{j_N} (x)\bR^*k - \lambda_k
(x)\bR^*k|^p\,\dx\bigg)^{\frac{1}{p}} \leq \frac{1}{N }.
\end{aligned}
\end{equation}
Defining
\begin{equation*}
\begin{aligned}
\tilde w_N(x,y):= \sum_{k\in\ZZ^\bblm \atop |k|_\infty \leq N} \lambda_k^{j_N}(x)\bR^*ke^{2\pi ik \cdot y} \enspace \text{ and } \enspace \bar w_N := Re(\tilde w_N),
\end{aligned}
\end{equation*}
we have \(\bar w_N \in W^{1,p}(\Omega;
\Gg_{\bR}^p)\); moreover, invoking Remark~\ref{rmk:onGR2} and \eqref{eq:goodjn}, we have
\begin{equation*}
\begin{aligned}
&\limsup_{N\to\infty} \bigg( \int_{\Omega\times Y^\bblm} |\bar  w_N(x,y) - w(x,y)|^p\,\dx\dy\bigg)^{\frac{1}{p}} \leq\limsup_{N\to\infty} \bigg( \int_{\Omega\times Y^\bblm} |\tilde  w_N(x,y)
- w(x,y)|^p\,\dx\dy\bigg)^{\frac{1}{p}} \\
&\quad\leq   \limsup_{N\to\infty}\bigg[ \bigg(\int_{\Omega\times Y^\bblm} |\tilde w_N(x,y) - w_N(x,y)|^p\,\dx\dy\bigg)^{\frac{1}{p}} +\bigg( \int_{\Omega\times
Y^\bblm} | w_N(x,y) - w(x,y)|^p\,\dx\dy\bigg)^{\frac{1}{p}} \bigg] \\
&\quad =   \limsup_{N\to\infty} \bigg(\int_{\Omega\times
Y^\bblm} \bigg|\sum_{k\in\ZZ^\bblm \atop |k|_\infty \leq N}(\lambda_k^{j_N}
(x) - \lambda_k
(x))\bR^*k e^{2\pi ik \cdot y}\bigg|^p\,\dx\dy\bigg)^{\frac{1}{p}}
\leq    \limsup_{N\to\infty} \frac1N =0,  
\end{aligned}
\end{equation*}
which concludes the proof of \eqref{eq:W1,papprox}.  
\end{proof}

\begin{remark}\label{rmk:relGRAR*} Let \(\Omega\subset \RRn\) be a simply connected, bounded, and open set. Applying  Proposition~\ref{prop:reverseR2sc} to \(u=0\) and \(w\in \Gg_{\bR}^p\), we can find a bounded sequence
  \(\{u_\epsi\}_{\epsi} \subset
W^{1,p}(\Omega)\)  such that
\begin{equation*}
\begin{aligned}
u_{\epsi}\Rsc{\bR\text{-}2sc} 0  \quad \text{and} \quad  
 \grad u_{\epsi}\Rsc{\bR\text{-}2sc}  w.
\end{aligned}
\end{equation*} 
Then, by Proposition~\ref{prop:R2sc-wc}, we have  \(u_\epsi\weakly
0\) in \(W^{1,p}(\Omega)\) and \(\int_{Y^\bblm}
w(y)\,\dy =0\). On the other hand, using the uniqueness of the
\(\bR\)-two-scale limit (see Remark~\ref{rmk:uniq}) and 
Proposition~\ref{prop:necR2sc} with \(\bbld=\bbln\)
and \(\Aa=\rm curl\) in \(\RRn\), we conclude that that \(w\in  L^p_\#(Y^\bblm;\RRn)\) is \(\A_{\bR^*}\)-free in the sense of Definition~\ref{def:AR*free}. 

Conversely, if  \(w\in
 L^p_\#(Y^\bblm;\RRn)\) is \(\A_{\bR^*}\)-free with  \(\int_{Y^\bblm}
w(y)\,\dy =0\), then by Proposition~\ref{prop:sufR2sc} there exists a  bounded and
\(\Aa \)-free sequence, \(\{u_\epsi\}_\epsi\), 
in \(L^p(\Omega;\RRn)\) such
that \(u_{\epsi}\Rsc{\bR\text{-}2sc}
u \). As we are in the \(\Aa=\rm curl\) case and \(\Omega\) is simply connected, we can find a bounded
sequence, \(\{v_\epsi\}_\epsi\), in \(W^{1,p}(\Omega)\) such
that \(\int_\Omega v_\epsi(x)\,\dx = 0\) and  \(\grad v_\epsi = u_\epsi\). Then, by Proposition~\ref{prop:R2sc-wc}, we deduce that \(v_\epsi\weakly
0\) in \(W^{1,p}(\Omega)\). Finally, 
Remark~\ref{rmk:uniq} and Proposition~\ref{prop:gradR2sc} yield \(w\in \Gg_{\bR}^p\). 

Thus, in the \(\Aa=\rm curl\) case in \(\RRn\), we have that \(w\in
 L^p_\#(Y^\bblm;\RRn)\) is \(\A_{\bR^*}\)-free in the sense of
Definition~\ref{def:AR*free} if and only if  \(w\in \Gg_{\bR}^p\).
\end{remark}

To conclude, we observe that Theorem~\ref{thm:alterchaRl} is
an immediate consequence of the previous results.

\begin{proof}[Proof of Theorem~\ref{thm:alterchaRl}]
The claim in Theorem~\ref{thm:alterchaRl} follows from Propositions~\ref{prop:gradR2sc}
and \ref{prop:reverseR2sc}.
\end{proof}

\section*{Acknowledgements}
I. Fonseca and R.  Venkatraman acknowledge the Center for Nonlinear Analysis where part of this work was carried out. The research of I. Fonseca was partially funded by the National Science Foundation under Grants No. DMS-1411646 and DMS-1906238. The research of R.  Venkatraman was funded by the National Science Foundation under Grant No. DMS-1411646.

\bibliographystyle{plain}

\begin{thebibliography}{10}

\bibitem{Al92}
Gr{\'e}goire Allaire.
\newblock Homogenization and two-scale convergence.
\newblock {\em SIAM J. Math. Anal.}, 23(6):1482--1518, 1992.

\bibitem{AlBr96}
Gregoire Allaire and Marc Briane.
\newblock Multiscale convergence and reiterated homogenisation.
\newblock {\em Proc. Roy. Soc. Edinburgh Sect. A}, 126(2):297--342, 1996.

\bibitem{Am98}
Micol Amar.
\newblock Two-scale convergence and homogenization on {BV{$(\Omega)$}}.
\newblock {\em Asymptot. Anal.}, 16(1):65--84, 1998.

\bibitem{BaFo07}
Margarida Ba\'{\i}a and Irene Fonseca.
\newblock The limit behavior of a family of variational multiscale problems.
\newblock {\em Indiana Univ. Math. J.}, 56(1):1--50, 2007.

\bibitem{BlBaOtSh00}
Paul~D Bloom, K.G Baikerikar, Joshua~U Otaigbe, and Valerie~V Sheares.
\newblock Development of novel polymer/quasicrystal composite materials.
\newblock {\em Materials Science and Engineering: A}, 294-296:156 -- 159, 2000.

\bibitem{CT2}
Enrico Bombieri and Jean~E. Taylor.
\newblock Quasicrystals, tilings, and algebraic number theory: some preliminary
  connections.
\newblock In {\em The legacy of {S}onya {K}ovalevskaya ({C}ambridge, {M}ass.,
  and {A}mherst, {M}ass., 1985)}, volume~64 of {\em Contemp. Math.}, pages
  241--264. Amer. Math. Soc., Providence, RI, 1987.

\bibitem{BoGuZo10}
Guy Bouchitt{{\'e}}, S{{\'e}}bastien Guenneau, and Fr{{\'e}}d{{\'e}}ric Zolla.
\newblock Homogenization of dielectric photonic quasi crystals.
\newblock {\em Multiscale Model. Simul.}, 8(5):1862--1881, 2010.

\bibitem{Br92}
Andrea Braides.
\newblock Almost periodic methods in the theory of homogenization.
\newblock {\em Appl. Anal.}, 47(4):259--277, 1992.

\bibitem{BrFoLe00}
Andrea Braides, Irene Fonseca, and Giovanni Leoni.
\newblock {A}-quasiconvexity: relaxation and homogenization.
\newblock {\em ESAIM Control Optim. Calc. Var.}, 5:539--577, 2000.

\bibitem{Br83}
Ha{\"{\i}}m Brezis.
\newblock {\em Analyse fonctionnelle. Th{\'e}orie et applications.}
\newblock Collection Math\'ematiques Appliqu\'ees pour la Ma\^\i trise.
  [Collection of Applied Mathematics for the Master's Degree]. Masson, Paris,
  1983.
\newblock Th{\'e}orie et applications. [Theory and applications].

\bibitem{BuFo15}
Laura Bufford and Irene Fonseca.
\newblock A note on two scale compactness for {$p=1$}.
\newblock {\em Port. Math.}, 72(2-3):101--117, 2015.

\bibitem{CaTa1}
John~W. Cahn and Jean~E. Taylor.
\newblock An introduction to quasicrystals.
\newblock In {\em The legacy of {S}onya {K}ovalevskaya ({C}ambridge, {M}ass.,
  and {A}mherst, {M}ass., 1985)}, volume~64 of {\em Contemp. Math.}, pages
  265--286. Amer. Math. Soc., Providence, RI, 1987.

\bibitem{CaGa02}
Juan Casado-D\'{i}az and Inmaculada Gayte.
\newblock The two-scale convergence method applied to generalized {B}esicovitch
  spaces.
\newblock {\em R. Soc. Lond. Proc. Ser. A Math. Phys. Eng. Sci.},
  458(2028):2925--2946, 2002.

\bibitem{CaVa77}
Charles Castaing and Michel Valadier.
\newblock {\em Convex analysis and measurable multifunctions}.
\newblock Lecture Notes in Mathematics, Vol. 580. Springer-Verlag, Berlin,
  1977.

\bibitem{CiDaDe04}
Doina Cioranescu, Alain Damlamian, and Riccardo De~Arcangelis.
\newblock Homogenization of nonlinear integrals via the periodic unfolding
  method.
\newblock {\em C. R. Math. Acad. Sci. Paris}, 339(1):77--82, 2004.

\bibitem{Dac2}
Bernard Dacorogna.
\newblock Quasiconvexity and relaxation of nonconvex problems in the calculus
  of variations.
\newblock {\em J. Functional Analysis}, 46(1):102--118, 1982.

\bibitem{Dac1}
Bernard Dacorogna.
\newblock {\em Weak continuity and weak lower semicontinuity of nonlinear
  functionals}, volume 922 of {\em Lecture Notes in Mathematics}.
\newblock Springer-Verlag, Berlin-New York, 1982.

\bibitem{Da93}
Gianni Dal~Maso.
\newblock {\em An introduction to {$\Gamma$}-convergence}.
\newblock Progress in Nonlinear Differential Equations and their Applications,
  8. Birkh\"auser Boston Inc., Boston, MA, 1993.

\bibitem{DaMo860}
Gianni Dal~Maso and Luciano Modica.
\newblock Nonlinear stochastic homogenization and ergodic theory.
\newblock {\em J. Reine Angew. Math.}, 368:28--42, 1986.

\bibitem{DaFo160}
Elisa Davoli and Irene Fonseca.
\newblock Homogenization of integral energies under periodically oscillating
  differential constraints.
\newblock {\em Calc. Var. Partial Differential Equations}, 55(3):Art. 69, 60,
  2016.

\bibitem{DaFo16}
Elisa Davoli and Irene Fonseca.
\newblock Periodic homogenization of integral energies under space-dependent
  differential constraints.
\newblock {\em Port. Math.}, 73(4):279--317, 2016.

\bibitem{De92}
Riccardo De~Arcangelis.
\newblock Homogenization of {B}esicovitch type almost periodic functionals.
\newblock {\em Appl. Anal.}, 43(1-2):77--98, 1992.

\bibitem{Br81}
Nicolaas~Govert de~Bruijn.
\newblock Algebraic theory of {P}enrose's non-periodic tilings of the plane.
  {I}, {II}: dedicated to {G}. {P}\'olya.
\newblock {\em Indagationes Mathematicae}, 43(1):39 -- 66, 1981.

\bibitem{DuKa85}
Michel Duneau and Andr{\'e} Katz.
\newblock Quasiperiodic patterns.
\newblock {\em Phys. Rev. Lett.}, 54(25):2688--2691, 1985.

\bibitem{FeFo120}
Rita Ferreira and Irene Fonseca.
\newblock Characterization of the multiscale limit associated with bounded
  sequences in {$BV$}.
\newblock {\em J. Convex Anal.}, 19(2):403--452, 2012.

\bibitem{FeFo12}
Rita Ferreira and Irene Fonseca.
\newblock Reiterated homogenization in {$BV$} via multiscale convergence.
\newblock {\em SIAM J. Math. Anal.}, 44(3):2053--2098, 2012.

\bibitem{FoKr10}
Irene Fonseca and Stefan Kr{\"o}mer.
\newblock Multiple integrals under differential constraints: two-scale
  convergence and homogenization.
\newblock {\em Indiana Univ. Math. J.}, 59(2):427--457, 2010.

\bibitem{FoMu99}
Irene Fonseca and Stefan M\"{u}ller.
\newblock {$\mathscr{A}$}-quasiconvexity, lower semicontinuity, and {Y}oung
  measures.
\newblock {\em SIAM J. Math. Anal.}, 30(6):1355--1390, 1999.

\bibitem{FoZa03}
Irene Fonseca and Elvira Zappale.
\newblock Multiscale relaxation of convex functionals.
\newblock {\em J. Convex Anal.}, 10(2):325--350, 2003.

\bibitem{Gr14}
Loukas Grafakos.
\newblock {\em Classical {F}ourier analysis}, volume 249 of {\em Graduate Texts
  in Mathematics}.
\newblock Springer, New York, third edition, 2014.

\bibitem{GuSuPa20}
Junhong Guo, Tuoya Sun, and Ernian Pan.
\newblock Three-dimensional nonlocal buckling of composite nanoplates with
  coated one-dimensional quasicrystal in an elastic medium.
\newblock {\em International Journal of Solids and Structures}, 185-186:272 --
  280, 2020.

\bibitem{HeNeVa18}
Martin Heida, Stefan Neukamm, and Mario Varga.
\newblock Stochastic unfolding and homogenization.
\newblock {\em Preprint}, 2018.

\bibitem{KeBoDuFo12}
S.~Kenzari, D.~Bonina, J.M. Dubois, and V.~Fourn{\'e}e.
\newblock Quasicrystal--polymer composites for selective laser sintering
  technology.
\newblock {\em Materials \& Design}, 35:691 -- 695, 2012.
\newblock New Rubber Materials, Test Methods and Processes.

\bibitem{Ko79}
Sergey~M. Kozlov.
\newblock The averaging of random operators.
\newblock {\em Mat. Sb. (N.S.)}, 109(151)(2):188--202, 327, 1979.

\bibitem{KrKr16}
Carolin Kreisbeck and Stefan Kr\"omer.
\newblock Heterogeneous thin films: combining homogenization and dimension
  reduction with directors.
\newblock {\em SIAM J. Math. Anal.}, 48(2):785--820, 2016.

\bibitem{Lag}
Jeffrey~C. Lagarias.
\newblock Mathematical quasicrystals and the problem of diffraction.
\newblock In {\em Directions in mathematical quasicrystals}, volume~13 of {\em
  CRM Monogr. Ser.}, pages 61--93. Amer. Math. Soc., Providence, RI, 2000.

\bibitem{LeSt84}
Dov Levine and Paul~Joseph Steinhardt.
\newblock Quasicrystals: A new class of ordered structures.
\newblock {\em Physical Review Letters}, 53(26):2477--2480, 1984.

\bibitem{LuNgWa02}
Dag Lukkassen, Gabriel Nguetseng, and Peter Wall.
\newblock Two-scale convergence.
\newblock {\em Int. J. Pure Appl. Math.}, 2(1):35--86, 2002.

\bibitem{MaMoSa15}
Jos\'{e} Matias, Marco Morandotti, and Pedro~M. Santos.
\newblock Homogenization of functionals with linear growth in the context of
  {$\mathcal{A}$}-quasiconvexity.
\newblock {\em Appl. Math. Optim.}, 72(3):523--547, 2015.

\bibitem{Meyer}
Yves Meyer.
\newblock Quasicrystals, diophantine approximation and algebraic numbers.
\newblock In {\em Beyond quasicrystals}, pages 3--16. Springer, 1995.

\bibitem{Mu81}
Fran\c{c}ois Murat.
\newblock Compacit\'{e} par compensation: condition n\'{e}cessaire et
  suffisante de continuit\'{e} faible sous une hypoth{\`e}se de rang constant.
\newblock {\em Ann. Scuola Norm. Sup. Pisa Cl. Sci. (4)}, 8(1):69--102, 1981.

\bibitem{Ne12}
Stefan Neukamm.
\newblock Rigorous derivation of a homogenized bending-torsion theory for
  inextensible rods from three-dimensional elasticity.
\newblock {\em Arch. Ration. Mech. Anal.}, 206(2):645--706, 2012.

\bibitem{Ng89}
Gabriel Nguetseng.
\newblock A general convergence result for a functional related to the theory
  of homogenization.
\newblock {\em SIAM J. Math. Anal.}, 20(3):608--623, 1989.

\bibitem{PaVa81}
George~C. Papanicolaou and Srinivasa. Varadhan.
\newblock Boundary value problems with rapidly oscillating random coefficients.
\newblock In {\em Random fields, {V}ol. {I}, {II} ({E}sztergom, 1979)},
  volume~27 of {\em Colloq. Math. Soc. J\'{a}nos Bolyai}, pages 835--873.
  North-Holland, Amsterdam-New York, 1981.

\bibitem{Pe79}
Roger Penrose.
\newblock Pentaplexity: a class of nonperiodic tilings of the plane.
\newblock {\em Math. Intelligencer}, 2(1):32--37, 1979/80.

\bibitem{Sa04}
Pedro~M. Santos.
\newblock {$\mathscr{A}$}-quasi-convexity with variable coefficients.
\newblock {\em Proc. Roy. Soc. Edinburgh Sect. A}, 134(6):1219--1237, 2004.

\bibitem{ShBlGrCa84}
Dan Shechtman, Ilan Blech, Denis Gratias, and John~W. Cahn.
\newblock Metallic phase with long-range orientational order and no
  translational symmetry.
\newblock {\em Phys. Rev. Lett.}, 53:1951--1953, Nov 1984.

\bibitem{ShBl85}
Dan Shechtman and Ilan~A. Blech.
\newblock The microstructure of rapidly solidified {A}l6{M}n.
\newblock {\em Metallurgical Transactions A}, 16(6):1005--1012, 1985.

\bibitem{Ta79}
Luc Tartar.
\newblock Compensated compactness and applications to partial differential
  equations.
\newblock In {\em Nonlinear analysis and mechanics: {H}eriot-{W}att
  {S}ymposium, {V}ol. {IV}}, volume~39 of {\em Res. Notes in Math.}, pages
  136--212. Pitman, Boston, Mass.-London, 1979.

\bibitem{WeGuCh19}
Niklas Wellander, S{\'e}bastien Guenneau, and Elena Cherkaev.
\newblock Homogenization of quasiperiodic structures and two-scale
  cut-and-projection convergence.
\newblock {\em arXiv preprint (arXiv:1911.03560)}, 2019.

\end{thebibliography}

\end{document}